\documentclass[a4paper,11pt]{amsart}
\usepackage[left=2.7cm,right=2.7cm,top=3.5cm,bottom=3cm]{geometry}
 \usepackage[colorlinks]{hyperref}
\usepackage{amsthm,amssymb,amsmath,amsfonts,amscd}
\usepackage{stmaryrd}
\usepackage[latin1]{inputenc}

 \usepackage{latexsym}
 \usepackage{longtable,bbold}
\usepackage[french]{babel}
 \usepackage[all]{xy}

\usepackage{tikz}
\usetikzlibrary{matrix,arrows}
\usepackage{mathabx,epsfig}

\newfont{\cyr}{wncyr10 scaled 1100}
\newfont{\cyrr}{wncyr9 scaled 1000}

\catcode`\=\active\def {\'e}
\catcode`\=\active\def {\`e}
\catcode`\=\active\def {\`a}
\catcode`\=\active\def {\^a}
\catcode`\Ë=\active\def Ë{\`A}
\catcode`\=\active\def {\"a}
\catcode`\=\active\def {\`u}
\catcode`\=\active\def {\^{e}}
\catcode`\=\active\def {\^{i}}
\catcode`\=\active\def {\"i}
\catcode`\=\active\def {\^{o}}
\catcode`\=\active\def {\^{u}}
\catcode`\=\active\def {\"{u}}
\catcode`\=\active\def {\"{U}}
\catcode`\=\active\def {\c{c}}
\catcode`\=\active\def {\c{C}}
\catcode`\é=\active\def é{\c{C}}

\theoremstyle{plain}
\newtheorem{theorem}{Theorem}[section]

\newtheorem*{teono}{Th\'eor\`eme}

\theoremstyle{definition}

\theoremstyle{remark}
\newtheorem{remark}[theorem]{Remarque}










\newcommand{\F}{\mathbb F}



\newcommand{\arr}{{\; \longrightarrow \;}}

\newtheorem{thm}{Th\'eor\`eme}[section]
\newtheorem{prop}[thm]{Proposition}
\newtheorem{lem}[thm]{Lemme}
\newtheorem{cor}[thm]{Corollaire}

\newtheorem{rmk}[thm]{Remarque}
\newtheorem{dfn}[thm]{D\'efinition}

\newtheorem{ass}[thm]{Hypothse}


\newcommand{\End}{{\operatorname{End}}}

\newcommand{\Hom}{{\operatorname{Hom}}}

\newcommand{\Ima}{{\operatorname{Im}}}

\newcommand{\ord}{{\operatorname{ord }}}

\newcommand{\Tr}{{\operatorname{Tr }}}
\newcommand{\val}{{\operatorname{val}}}


\newcommand{\GL}{{\operatorname{GL}}}

\newcommand{\U}{{\operatorname{U}}}

\newcommand{\ha}{{\operatorname{ha}}}

\newcommand{\Lie}{{\operatorname{Lie}}}

\newcommand{\Ga}{{\mathbb{G}_a}}
\newcommand{\G}{{\mathbb{G}}}

\newcommand{\Fq}{\mathbb{F}_q}



\newcommand{\gern}{{\mathfrak{n}}}

\newcommand{\gerp}{{\mathfrak{p}}}
\newcommand{\gerq}{{\mathfrak{q}}}

\newcommand{\gerv}{{\mathfrak{v}}}

\newcommand{\gerC}{{\mathfrak{C}}}


\newcommand{\calC}{{\mathcal{C}}}

\newcommand{\calE}{{\mathcal{E}}}

\newcommand{\calM}{{\mathcal{M}}}

\newcommand{\calO}{{\mathcal{O}}}
\newcommand{\calP}{{\mathcal{P}}}

\newcommand{\calR}{{\mathcal{R}}}

\newcommand{\calT}{{\mathcal{T}}}

\newcommand{\calZ}{{\mathcal{Z}}}


\def\F{\mathbb{F}}

\def\N{\mathbb{N}}

\def\Z{\mathbb{Z}}

\newcommand{\set}[1]{\left\lbrace #1 \right\rbrace}
\newcommand{\mb}[1]{\mathbb{#1}}
\newcommand{\mc}[1]{\mathcal{#1}}
\newcommand{\mr}[1]{\mathrm{#1}}
\newcommand{\mf}[1]{\mathfrak{#1}}

\setcounter{tocdepth}{1}

\include{thebibliography}

\begin{document}

\title{Familles de formes modulaires de Drinfeld pour le groupe gEnEral linEaire} 
\author{Marc-Hubert Nicole et Giovanni Rosso}

 \begin{abstract} 
 Soient $F$ un corps de fonctions sur $\mathbb{F}_q$, $A$ l'anneau des fonctions rgulires hors d'une place $\infty$ et $\mathfrak{p}$ un idal premier de $A$.
En premier lieu, nous dveloppons la thorie de Hida pour les formes modulaires de Drinfeld de rang $r$ qui sont de pente nulle pour l'oprateur de Hecke $\mathrm{U}_{\pi}$ convenablement dfini. {En second lieu, nous montrons en pente finie l'existence de familles de formes modulaires de Drinfeld variant continment selon le poids. En troisime lieu, nous prouvons un rsultat de classicit: une forme modulaire de Drinfeld surconvergente de pente suffisamment petite par rapport au poids est une forme modulaire de Drinfeld classique.}

 
 \end{abstract}


\subjclass[2010]{11F33, 11F52, 11G09}
\keywords{Hida theory, $p$-adic families, Drinfeld modular forms of higher rank, eigenvarieties}

\address{M.-H. N.: Aix Marseille Universit, CNRS, Centrale Marseille, I2M, UMR 7373, 13453 Marseille; Mailing Address: Universit d'Aix-Marseille, campus de Luminy, case 907, Institut mathmatique de Marseille (I2M), 13288 Marseille cedex 9, FRANCE}
\email{\href{mailto: marc-hubert.nicole@univ-amu.fr}{marc-hubert.nicole@univ-amu.fr}}

\address{G.R.: Concordia University, Department of Mathematics and Statistics,
Montral, Qubec, Canada}
\email{\href{mailto:giovanni.rosso@concordia.ca}{giovanni.rosso@concordia.ca}}
\urladdr{\url{https://sites.google.com/site/gvnros/}}

\date{version du \today}

\maketitle

\section{Introduction}

Depuis les travaux de Swinnerton-Dyer, Serre et Katz \cite{ModularFuncIII} sur les congruences modulo $p$ entre les formes modulaires, le nombre d'avances spectaculaires dans le domaine a cr sans cesse. En particulier, on sait d\'emontrer que la plupart des formes automorphes peuvent \^etre d\'eform\'ees en familles $p$-adiques. Cette technique de dformation a eu des applications des plus vari\'ees et impressionnantes : une liste trs incomplte comprendrait la preuve par Greenberg--Stevens de la conjecture de Mazur--Tate--Teitelbaum sur les z\'eros exceptionnels des fonctions $L$ $p$-adiques \cite{SSS}, de nouveaux cas de modularit\'e potentielle de repr\'esentations galoisiennes symplectiques ou unitaires \cite{BLGGT}, et la preuve par Emerton de la conjecture de Fontaine--Mazur pour les repr\'esentations de dimension 2 \cite{EmertonLocalGlobal}.

 Le but de cet article est de d\'evelopper une th\'eorie des familles des formes modulaires de Drinfeld variant selon le poids. En pente zro c'est--dire le cas ordinaire, nous obtenons des rsultats fortement analogues \`a la construction classique de Hida \cite{H2}. 

Soient  $X$ une courbe lisse et projective sur $\mb F_p$, et $\infty$ un point fix\'e de $X$ ; soient $\mb F_q:=\mr H^0(X,\mc O_X)$ le corps des fonctions constantes de $X$ et   $A:=\mr H^0(X\setminus \set{\infty},\mc O_X)$ l'anneau des fonctions r\'eguli\`eres hors de l' $\infty$. On choisit un id\'eal premier $\gerp$ de $A$ de degr $d$ qui sera suppos\'e {\it principal} dans cette introduction uniquement; et soit alors $\pi$ un g\'en\'erateur de cet idal $\gerp$.

Suivant Pink \cite{Pink2013}, on peut d\'efinir les formes modulaires de Drinfeld de rang $r$ et poids $k$ comme des sections d'un faisceau localement inversible $\omega^k$ sur la vari\'et\'e modulaire de Drinfeld compactifi\'ee $\overline{M_{\gern}}$. De plus, on a une notion de module de Drinfeld ordinaire sur $A/\gerp$, et le lieu des points $\gerp$-ordinaires de $ \overline{M_{\gern}} \times  A/\gerp$ est un ouvert affine. Tout ceci sugg\`ere que l'approche coh\'erente de Hida et Pilloni \cite{Pil} se g\'en\'eralise au cadre de Drinfeld. En effet, on sait que la partie connexe de la $\gerp^n$-torsion d'un module de Drinfeld $\gerp$-ordinaire est isomorphe \`a la $\gerp^n$-torsion d'un module de Hayes pour $A$. On construit donc une tour d'Igusa $\mr{Ig}$, qui est un sch\'ema formel qui param\`etre les isomorphismes issus de la  $\gerp^\infty$-torsion du module de Drinfeld universel $\gerp$-ordinaire, et qui contient toutes les formes modulaires classiques. 

On aurait envie d'utiliser les fonctions sur $\mr{Ig}$ pour d\'efinir un espace de formes modulaires $\gerp$-adiques, mais plusieurs probl\`emes pineux se dressent tout de suite : la tour d'Igusa $\mr{Ig}$ est un pro-rev\^etement \'etale de groupe de Galois $A_{\gerp}^{\times}$, donc l'ensemble de fonctions sur $\mr{Ig}$ est un module sur l'alg\`ebre d'Iwasawa $$\Lambda_{\infty}:=A_{\gerp}\llbracket 1 + \pi A_{\gerp}\rrbracket \cong A_{\gerp}\llbracket T_1, T_2, \dots \rrbracket,$$ 

\noindent algbre qui est de dimension infinie, alors que par contraste, l'algbre d'Iwasawa classique $\Z_p[[\Z_p^{\times}]]$ n'a que dimension $2$. 

De plus, quitte \`a prendre une extension de $A_{\gerp}$ qui contienne les points de $\gerp^n$-torsion du module de Hayes, on voit que les fonctions sur la tour d'Igusa contiennent toutes les formes modulaires de niveau $K_1(\gerp^n)$. 

Soit  $\Lambda$ l'alg\`ebre d'Iwasawa de $A_{\gerp}^{\times}$, qu'on dcompose comme un produit d'alg\`ebres locales :
\[ \Lambda \cong \prod_{\frac{\mb Z}{(q^d-1) \mb Z}} \Lambda_{\infty}. \]  
 Cet espaces des poids en main, on peut alors appliquer la machinerie de \cite{HPEL} et obtenir le r\'esultat suivant, que nous voyons comme un analogue du thorme de contrle vertical de Hida dans le cadre classique : 

\begin{teono}[Thm. \ref{thm:Hidamain}]
On a :
\begin{itemize}
\item un espace de formes modulaires $\mf p$-adiques $\mc V$ muni d'un projecteur ordinaire ; 
\item un module de familles de Hida de formes modulaires de Drinfeld $\mc M$ de type fini sur $\Lambda$ et libre sur $\Lambda_{\infty}$ tel que, si $k$ est assez grand, 
on rcupre l'espace des formes modulaires ordinaires de poids $k$ via un isomorphisme :
$$ \mc M \otimes_{\Lambda,k} \calO \overset{\cong}{\longrightarrow} \mc M_k^{\ord}$$
qui est \'equivariant pour l'action de l'alg\`ebre de Hecke engendr\'ee par les $\mr T_g$ et $\mr U_{\pi}$.
\end{itemize}
\end{teono}

Comme notre module est libre sur $\Lambda_{\infty}$, on a aussi des fantasmagoriques formes modulaires de poids $(k_1,k_2, \ldots ) \in \mb Z_p^{\mb N}$ sans correspondants classiques. 
\noindent

En rang $2$, on  retrouve les poids classiques pour les formes modulaires $\gerp$-adiques de \cite{Vincent2014, Gosspiadic} et on obtient aussi les formes modulaires g\'eom\'etriques $\gerp$-adiques de \cite{Hattori}.

Une fois obtenue la th\'eorie de Hida i.e., les familles de formes modulaires de pente zro, naturellement l'tape suivante est de chercher  construire les familles de pente finie. Mais l'espace adique associ\'e \`a $\Lambda_{\infty}$ n'a pas forcment de bonnes propri\'et\'es g\'eomtriques, car l'id\'eal qui d\'efinit la topologie n'est pas d'engendrement fini. Utilisant le plongement de $\Lambda_{\infty}$ dans $\mc C(\mb Z_p, A_{\gerp})$ induit par par la flche $[1+z] \mapsto (s \mapsto (1+z)^s)$, on d\'efinit un anneau $\Lambda^+$ tel que:
\[
\Lambda_{\infty} \subset \Lambda^+ \subset \Lambda_{\infty}[1/\pi].
\]

L'espace adique $\mc W$ associ\'e \`a $\Lambda^+$ a, quant  lui, des bonnes propri\'et\'es g\'eom\'etriques : c'est essentiellement une boule rigide en une infinit de variables. Malheureusement, on n'arrive pas \`a construire des faisceaux de formes modulaires de Drinfeld de poids $\kappa$ pour tous les points $\kappa$ de $\mc W$ car la plupart des caract\`eres ne sont pas analytiques. On parvient quand m\^eme \`a d\'efinir des faisceaux surconvergents $\omega^s$ sur $\overline{M_{\gern}}(v)$ (un voisinage strict du lieu ordinaire) pour tout $s$ dans $\mb Z_p$ plutt, et variant {\em continment} avec $s$. On a donc un faisceau $\omega^{s^{\mr{univ}}}$ sur $\overline{M_{\gern}}(v) \times  \underline{\mb Z_p}$. On observe que ses sections globales forment un module orthonormalisable sur $\mc C(\mb Z_p, A_{\gerp}[1/\pi])$, et on {s'inspire fortement de la machinerie noethrienne de Buzzard   pour conclure le thorme suivant dans notre contexte non noethrien:}
\begin{teono}[Thm. \ref{thm:eigencurve}]
\begin{enumerate}
\item
Il existe une vari\'et\'e spectrale $\mc Z \rightarrow \underline{\mb Z_p}$ qui est plate, localement quasi-finie, partiellement propre sur $ \underline{\mb Z_p}$, et param\`etre les inverses des valeurs propres de $\mr U_{\pi}$ sur le module des familles continues des formes surconvergentes. 

\item
Il existe une vari\'et\'e de Hecke $\mc C \rightarrow \mc Z$ qui param\`etre les syst\`emes de valeurs propres de l'alg\`ebre de Hecke sur le module des familles des formes surconvergentes.

\end{enumerate}
\end{teono}

En particulier, ce th\'eor\`eme nous dit que si $f$ est une forme modulaire de Drinfeld de poids $k$ propre pour $\mr U_{\pi}$ et de pente finie, alors  extension finie prs de l'algbre des fonctions continues $\calC(U,\calO)$, il existe une fonction $F$ sur $U \subset \mb Z_p$ telle que  $F(s)$ est une valeur propre pour $\mr U_{\pi}$ dans l'espace des formes surconvergentes de poids $s$ et $F(k)=f$, voir Cor. \ref{corsection} pour un nonc prcis. 

{Dans la derni\`ere section, nous traitons la classicit\'e des formes surconvergentes de petite pente. Utilisant la m\'ethode des s\'eries de Kassaei \cite{KassaeiDuke}, nous d\'emontrons le th\'eor\`eme suivant :}
\begin{teono}[Cor. \ref{coro:ext}]
Soit $f$ une forme modulaire de Drinfeld de poids $k$, surconvergente et propre pour $\mr{U}_{\pi}$ de pente $ < k-r+1$. Alors $f$ est une forme de Drinfeld classique i.e., 
\[
f \in  \mr{H}^0(\overline{M}_{\gern,\mr{drap}},\omega^{k}), 
\]
o $\overline{M}_{\gern,\mr{drap}}$ est la vari\'et\'e modulaire de Drinfeld de niveau $K(\gern) \cap K_0(\gerp)$ sur $\mc O[1/\pi]$.
\end{teono}
{Gr\^ace au programme de Langlands, on a maintenant une bonne connaissance des re--pr\'esentations  valeurs en caract\'eristique z\'ero du groupe de Galois de $F$. Par contre, les repr\'esentations  valeurs en caract\'eristique positive sont encore trs myst\'erieuses, mme pour $\GL(2)$. Par example, on ne sait pas grand-chose sur les reprsentations associ\'ees aux modules de Drinfeld de rang $1$ ou  multiplication complexe : comme se demandait dj Goss dans \cite{GossModularity}, proviennent-elles toutes de la repr\'esentation associ\'ee \`a une forme de Drinfeld de rang $2$, cf. \cite{BockleGal} ? Et quelle est leur ramification exactement?
Nos familles pourront tre utiles pour avancer dans l'tude de ces repr\'esentations. 


Beaucoup de questions int\'eressantes inspires d'analogues classiques restent encore ouvertes, en particulier :
\begin{itemize}
\item {\it Thorie de Hida: contrle horizontal.} Peut-on d\'evelopper une th\'eorie de Hida pour les formes modulaires de niveau arbitrairement profond en $\gerp$ ? En particulier, peut-on amliorer les rsultats de \cite{Marigonda} en prouvant un thorme de contrle horizontal ?
\item {\it \'Etude des pentes  la Gouv\^ea-Mazur.} Des calculs rcents de Bandini--Valentino \cite{BandiniValentino,BandiniValentino2} et Hattori \cite{HattoriTable} ont suggr une grande r\'egularit\'e dans la distribution des pentes de $\mr U_T$ quand $A=\mb F_{q}[T]$, et Hattori a prouv une conjecture dans le style de Gouv\^ea et Mazur en rang $2$ \cite{HattoriGM}. Peut-on gnraliser ce rsultat en rang suprieur?
\item {\it Optimalit en pente finie.} En particulier, peut-on dire si les fonctions continues qui param\`etrent les syst\`emes de valeurs propres pour les op\'erateurs de Hecke qu'on a construit en pente finie positive appartiennent ou pas \`a (une extension de) $\Lambda^+$ ou de $\Lambda_{\infty}$ ?
\item {\it Classicalit en pente infinie.} Contrairement aux formes modulaires sur $\mb Q$, il y a des formes modulaires de Drinfeld classiques de niveau iwahorique et de pente infinie. Y a-t-il un crit\`ere pour distinguer les formes modulaires de Drinfeld classiques de pente infinie de celles qui sont purement $\pi$-adiques?  
\end{itemize}
\section{La vari\'et\'e de Drinfeld et ses formes modulaires}
On fixe une courbe lisse et projective $X$ sur $\mb F_p$ et un point $\infty \in X$ ; soit $\mb F_q:=\mr H^0(X,\mc O_X)$ le corps de fonctions constantes de $X$. On dnote par $A:=\mr H^0(X\setminus \set{\infty},\mc O_X)$ les fonctions r\'eguli\`eres sur $X\backslash \left\{ \infty \right\}$ , par $F$ son corps de fonctions, par $F_{\infty}$ le compl\'et\'e de $F$ par rapport \`a $\infty$, et par $\mb C_{\infty}$ la compltion de la cl\^oture alg\'ebrique de $F_{\infty}$. Soit aussi $\widehat{A}:=\prod_{\mf p} A_{\mf p}$, o $\mf p$ est un idal premier de $A$. Pour chaque $\gerp$, on d\'enote le corps r\'esiduel de $A_{\gerp}$ par $\kappa_{\gerp}$.

\subsection{Rappels sur les modules et formes modulaires de Drinfeld}

Soit $r$ un entier et $S$ un sch\'ema sur $A$. Suivant Pink \cite{Pink2013}, on d\'efinit :
\begin{dfn}
 Un module de Drinfeld gnralis de rang $\leq r$ sur un schma $S$ est un couple $(E,\varphi)$ o:
 \begin{itemize}
 \item $E$ est un schma en groupes sur $S$ muni d'un morphisme somme $\Ga \times E \arr E$ qui est localement isomorphe au schma en groupe additif $\Ga$ muni du morphisme $\Ga \times_{\calO_S} \G_{a,S} \arr \G_{a,S}$;
 \item $\varphi$ est un morphisme d'anneaux
 \[ \varphi: A \arr \End(E), \quad a \mapsto \varphi_a = \sum_i \varphi_{a,i} \tau^i, \]
 \noindent tel que $\varphi_{a,i} \in \Gamma(S, E^{1-q^i})$ satisfaisant 

\begin{enumerate}
\item la drive $\textup{d}\varphi: a \mapsto \varphi_{a,0}$ concide avec le morphisme structurel $A \arr \Gamma(S,\calO_S)$;

\item au-dessus de tout point $s \in S$, le morphisme $\varphi$ dfinit un module de Drinfeld de rang $r_s \geq 1$;

\item pour tout $a \in A$ et $i > r \deg(a)$, on a $\varphi_{a,i} = 0 $.

\end{enumerate}  

Un module de Drinfeld de rang $r \geq 1$ est un module de Drinfeld gnralis de rang $\leq r$  tel que (2) est remplac par 
\begin{enumerate}
\item[(2')] au-dessus de tout point $s \in S$, le morphisme $\varphi$ dfinit un module de Drinfeld de rang $r_s =r$;
\end{enumerate}  
 \end{itemize}
 \end{dfn}
Par abus de notation, on n'utilisera que le symbole $\varphi$ pour reprsenter un module de Drinfeld. Si $S=\mr{Spec}(R)$ et $(E,\varphi)$ est un module de Drinfeld sur $S$, on dira plut\^ot que $(E,\varphi)$ est un module de Drinfeld sur $R$.
 
 On d\'efinit le sch\'ema en groupes de la $\mf n$-torsion de $(E, \varphi)$ comme 
\[
\varphi [\mf n]:= \bigcap_{a \in \mf n} \mr{Ker}(\varphi_a).
 \]
 \begin{dfn}
Soit $\mf n$ un id\'eal de $A$ et $S$ un sch\'ema sur $A$; on d\'efinit une structure de niveau $\mf n$ sur un module de Drinfeld $E$ de rang $r$ par un morphisme de $A$-sch\'ema en groupes sur $S$ :
\begin{align*}
\alpha : \underline{(\mf n^{-1}/A)^r} {\longrightarrow} \varphi [\mf n]
\end{align*}
qui identifie $\sum \alpha(i)$ avec le diviseur de Cartier sur $E$ associ\'e \`a $\varphi [\mf n]$. 
Si $\mf n$ est inversible dans $S$, alors une structure de niveau $\mf n$ est tout simplement un isomorphisme 
\begin{align*}
\alpha : \underline{(\mf n^{-1}/A)^r} \stackrel{\cong}{\longrightarrow} \varphi [\mf n]
\end{align*}
 \end{dfn}

\begin{rmk}\label{rem:principal}
Quand $S=\mr{Spec}(K)$ pour $K$ un corps, on peut trouver un polyn\^ome $\varphi_{\mf n}(\tau)$ dans $K \left\{ \tau \right\}$ tel que $\varphi[\mf n] \cong \mr{Ker}(\varphi_{\mf n})$ \cite[Def. 4.4.4]{GossFFA}.
\end{rmk}

On peut d\'efinir le probl\`eme de modules suivant :
\begin{align*}
M_{\gern}:\set{A\mbox{-alg\`ebres } S} \rightarrow \set{\begin{array}{c}
\mbox{classes d'isomorphisme des couples }(\varphi,\alpha)\mbox{ o\`u }\\
\varphi\mbox{ est un module de Drinfeld et } \alpha \mbox{ une structure de niveau } \mf n 
\end{array}}. 
\end{align*}
Si on suppose que $\mf n$ est un produit d'au moins deux id\'eaux premiers distincts, on a le th\'eor\`eme suivant d\^u \`a Drinfeld :
\begin{thm} Le foncteur $M_{\gern}$ a les propri\'et\'es suivantes :
\begin{itemize}
\item Il est repr\'esent\'e par un sch\'ema qu'on note galement $M_{\gern}$ ;
\item Le sch\'ema $M_{\gern}$ est affine, lisse sur $A[\mf n^{-1}]$ et de dimension relative $r-1$ sur $A$ ;
\item Si $\mf m \subset \mf n$ alors le morphisme d'oubli $M_{\mf m} \rightarrow M_{\gern}$ est fini et plat; de plus, il est tale en dehors de $\mf m \mf n^{-1}$.
\end{itemize}
\end{thm}
La vari\'et\'e $M_{\gern}$ est munie de sa famille de modules de Drinfeld de rang $r$ universelle $(\mc E,\varphi)$.

\noindent
Pour tout id\'eal $\mf n$ de $A$, on d\'efinit
\begin{align*}
K(\mf n):=\mr{Ker}\left( \mr{GL}_r(\widehat{A}) \rightarrow  \mr{GL}_r(A/ \mf n) \right)
\end{align*}
et on dit qu'un sous-groupe $K$ de $K(\mf n)$ est fin s'il existe un id\'eal premier $\mf q$ tel que l'image de $K$ dans  $\mr{GL}_2(A/ \mf q)$ est unipotente. 
\begin{dfn}
Un plongement ouvert $M_{\gern} \hookrightarrow \overline{M_{\gern}}$ de $A[\mf n^{-1}]$-sch\'emas est appel compactification minimale ou de Satake(--Baily--Borel) de ${M_{\gern}}$ si les deux conditions suivantes sont satisfaites:
\begin{enumerate}
\item Le sch\'ema $\overline{M_{\gern}}$ est une vari\'et\'e normale, int\`egre et projective sur $A[\mf n^{-1}]$; 
\item La famille de modules de Drinfeld  $(\mc E,\varphi)$ s'\'etend \`a une famille de modules de Drinfeld g\'en\'eralis\'es ``faiblement sparante'' sur $\overline{M_{\gern}}$.
\end{enumerate}
\end{dfn}

\begin{thm}
Il existe une  compactification de Satake de $M_{\gern}$ sur $A[\mf m^{-1}]$, unique \`a unique isomorphisme pr\`es, pour $\mf m \subset \mf n$ ; de plus, elle est plate. Si $\mf n= (t)$, on peut aussi choisir $\mf m = \mf n$. 
\end{thm}
\begin{proof}
Une telle compactification est construite sur $F$  par Pink \cite{Pink2013} ; il nous reste \`a d\'emontrer qu'on peut \'etendre sa compactification \`a $A[\mf n^{-1}]$. On commence par le cas $A=\mb F_q[t]$ et $\mf n = (t)$. La m\^eme construction de \cite[\S 7]{Pink2013} nous donne $M_{(t)}$ sur $\mb F_q[t,t^{-1}]$ (la seule propri\'et\'e dont on a besoin est qu'une structure de niveau $(T)$ est donn\'ee par un isomorphisme de $\varphi[T]$ avec $\mb F_q^r$), {qui est plate car obtenue par localisation de $\mb P^{r-1}$, qui est plat sur la base.} On peut ensuite continuer comme dans la d\'emonstration de \cite[Thm. 4.2]{Pink2013} : on choisit un \'el\'ement $t \in \mf n$ et on construit  la compactification de Satake pour $A=\mb F_q[t]$ et niveau $(t)$. On utilise donc les lemmes 4.3, 4.4 et 4.5 de {\it loc. cit.}, qui n'utilisent que le fait que la structure de niveau est donn\'ee par $\alpha : \underline{(\mf n^{-1}/A)^r} \stackrel{\cong}{\longrightarrow} \varphi [\mf n]$, pour descendre la compactification au quotient.
 \end{proof}
 
 En particulier, on a sur $M_{\gern}$ un faisceau $\omega$ d\'efini via le dual $\omega:=(\Lie\: \mc E)^{\vee}{:= (\Lie\: \mc E)^{-1}}$  de l'alg\`ebre de Lie de $(\mc E,\varphi)$ vu comme un fibr inversible. Posons $\omega^k := \omega^{\otimes k}$ pour allger la notation. On d\'efinit les formes modulaires de Drinfeld de poids $k$ et niveau $\mf n$:
$$ \calM_k := \mr H^0(\overline{M_{\gern}}, \omega^{k}).$$ 
Par la suite, il sera pratique de poser : 
\[
\mc T := \mr{Isom}(\mc O_{\overline{M_{\gern}}}, \omega).
\]
C'est imm\'ediat de v\'erifier que $\mc T$ est un $\mr{GL}_1$-torseur et que $\omega^k= \mc  O_{\mc T}[k]$.

 
 \subsection{La vari\'et\'e modulaire de Drinfeld modulo $\mf p$}
 On fixe maintenant un id\'eal premier $\mf p$ de degr $d$, qu'on suppose relativement premier au niveau $\mf n$ (suppos principal). 
 
 
\begin{lem}
Soit $(E,\varphi)$ un module de Drinfeld de rang $r$ sur un corps $K$  et $R$ un sous-anneau de $K$, r\'egulier et local. Alors $\varphi[\mf {\mf p}]=\mr{Ker}(\varphi_{\mf p})$ pour $\varphi_{\mf p}$ un polynme dans $R\left\{ \tau \right\}$.
\end{lem} 
\begin{proof}
Par le remarque \ref{rem:principal} on sait que ${\varphi_{\mf p}}$ divise $\varphi_a$ dans $K\left\{ \tau \right\}$, pour tout $a$ dans ${\mf p}$. Comme $R$ est un anneau  r\'egulier et local, on dispose du lemme de Gau\ss{}, donc une factorisation sur $K\left\{ \tau \right\}$  d'un polyn\^ome primitif se rel\`eve \`a $R\left\{ \tau \right\}$.
 \end{proof}
 
On s'int\'eresse au cas o\`u $K$  est une extension finie de $A/\mf p$. Par simplicit\'e, pour tout sch\'ema $Y$ sur $A_{(\gerp)}$ on utilisera la notation $Y_1$ pour indiquer $Y \times \mr{Spec}(A /{\gerp})$.  On sait que $\varphi[\mf p]$ est un module fini et plat de rang $q^{rd}$.  Le lemme suivant permet de commencer  analyser sa structure (rappelons la notation standard que $\pi$ est une uniformisante de $A_{\gerp}$) :
\begin{lem}
Soit $(\mc E, \varphi)$ le module de Drinfeld universel sur $K$ une extension finie de $A/\gerp$. On a alors 
\begin{align*}
\varphi_{\pi} = \varphi_{\pi,dh} \tau^{dh} + \cdots  
\end{align*}
pour $h \geq 1$.
 \end{lem}
\begin{proof}
Cf. \cite[Proposition 4.5.7]{GossFFA}.
\end{proof}
L'entier $h$ est appel\'e la hauteur de $\varphi[\mf p]$. 

\begin{prop}\label{prop:EO=New=prank}
Soit $(E,\varphi)$ un module de Drinfeld de rang $r$. Les assertions suivantes sont \'equivalentes:

\begin{itemize}
\item[(a)] Le module de Drinfeld a hauteur $h$.
\item[(b)] $|\varphi[\mf p](\overline{\mb F}_{q})| = q^{d(r-h)}$.
\item[(c)] Les pentes du polygone de Newton de $\varphi[\mf p]$ sont $0$ avec multiplicit\'e $r-h$ et $1/h$ avec multiplicit\'e $h$.
\end{itemize}
\end{prop}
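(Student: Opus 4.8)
The plan is to prove the two equivalences (a)$\Leftrightarrow$(b) and (a)$\Leftrightarrow$(c) separately; since none of the three conditions changes under extension of the base field, I would first reduce to the case where $(E,\varphi)$ is defined over $\overline{\mb F}_q$.

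For (a)$\Leftrightarrow$(b) I would argue by an elementary computation with the defining additive polynomial. Recall that $\varphi[\gerp]$ is finite flat of rank $q^{rd}$, so the polynomial $\varphi_{\gerp}\in\overline{\mb F}_q\left\{\tau\right\}$ with $\varphi[\gerp]=\Ker(\varphi_{\gerp})$ has top $\tau$-term $\varphi_{\gerp,rd}\tau^{rd}$, $\varphi_{\gerp,rd}\neq0$, while by the preceding lemma its bottom $\tau$-term is $\varphi_{\gerp,dh}\tau^{dh}$ with $\varphi_{\gerp,dh}\neq0$, $h$ being the height. Regrouping gives $\varphi_{\gerp}=\psi\circ\tau^{dh}$ with $\psi:=\sum_{i=dh}^{rd}\varphi_{\gerp,i}\tau^{i-dh}$; since the $\tau^{0}$-coefficient $\varphi_{\gerp,dh}$ of $\psi$ is non-zero, $\psi(X)$ is separable of degree $q^{rd-dh}$ and hence has $q^{rd-dh}$ roots in $\overline{\mb F}_q$. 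As $x\mapsto x^{q^{dh}}$ is a bijection of $\overline{\mb F}_q$ and $\varphi_{\gerp}(X)=\psi(X^{q^{dh}})$, one gets $|\varphi[\gerp](\overline{\mb F}_q)|=q^{rd-dh}=q^{d(r-h)}$, which is (b); conversely this cardinality pins down the $\tau$-adic order $dh$ of $\varphi_{\gerp}$, hence $h$.

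For (a)$\Leftrightarrow$(c) I would pass to the $\gerp$-divisible group. Note that $\varphi[\gerp]$ is the $\gerp$-torsion of $G:=\varphi[\gerp^{\infty}]$, which has $A_{\gerp}$-height $r$; thus $\varphi[\gerp]$ is a truncated Barsotti--Tate group of level one whose Newton polygon is, by convention, that of $G$. Splitting $G$ by its connected--\'etale sequence $0\to G^{\circ}\to G\to G/G^{\circ}\to 0$ over $\overline{\mb F}_q$, the \'etale quotient $G/G^{\circ}$ has $A_{\gerp}$-height $r-h$ (because $|\varphi[\gerp](\overline{\mb F}_q)|=(q^{d})^{r-h}$ by (b)) and so accounts for the slope $0$ with multiplicity $r-h$. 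The connected part $G^{\circ}$ is the $\gerp$-divisible group of the one-dimensional formal $A_{\gerp}$-module obtained by completing $(\mc E,\varphi)$ along its zero section; a connected, one-dimensional formal $A_{\gerp}$-module of height $h$ over $\overline{\mb F}_q$ is isoclinic of slope $1/h$ with multiplicity $h$, being unique up to isogeny by the Dieudonn\'e--Manin classification. Combining the two pieces, $G$ --- and therefore $\varphi[\gerp]$ --- has the Newton polygon asserted in (c); conversely the multiplicity of the slope $0$ equals the $A_{\gerp}$-height of $G/G^{\circ}$, which is $r-h$, so the Newton polygon recovers $h$.

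The step I expect to be the genuine obstacle is the one just sketched for (c): one has to make precise that the finite group scheme $\varphi[\gerp]$ carries a Newton polygon in the first place --- which is why I would route through $\varphi[\gerp^{\infty}]$ and the $\mathrm{BT}_1$ formalism --- and then identify its connected part with a one-dimensional formal $A_{\gerp}$-module and invoke the equal-characteristic version of Dieudonn\'e--Manin. An alternative that avoids the classification of formal modules would be to read the slopes directly off the Dieudonn\'e crystal of $\varphi[\gerp^{\infty}]$ \`a la Laumon, and I would fall back on that if the formal-module route became awkward. The converse implications (b)$\Rightarrow$(a) and (c)$\Rightarrow$(a) are then immediate, each numerical datum pinning down $h$.
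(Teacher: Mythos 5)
Votre d\'emonstration est correcte, mais elle ne suit pas le m\^eme chemin que l'article, dont la preuve tient en une ligne~: tout est d\'eduit de la d\'ecomposition du module de Dieudonn\'e de $\varphi[\gerp]$ donn\'ee par Laumon (Prop.~2.4.6 de son livre), qui fournit d'un seul coup la partie connexe, la partie \'etale et les pentes. Vous, en revanche, traitez (a)$\Leftrightarrow$(b) de fa\c{c}on enti\`erement \'el\'ementaire, en factorisant $\varphi_{\gerp}=\psi\circ\tau^{dh}$ et en comptant les racines de la partie s\'eparable $\psi$~; c'est un argument plus explicite et autonome, qui ne requiert aucune th\'eorie de Dieudonn\'e. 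Pour (a)$\Leftrightarrow$(c), vous passez par la suite connexe--\'etale du groupe $\gerp$-divisible $\varphi[\gerp^{\infty}]$ et par la classification de Dieudonn\'e--Manin (ou, de mani\`ere \'equivalente, celle des $A_{\gerp}$-modules formels de dimension~$1$) pour montrer que la partie connexe est isocline de pente $1/h$~; c'est en substance le contenu de la d\'ecomposition de Laumon que cite l'article, mais d\'epli\'e et justifi\'e pas \`a pas. Vous identifiez d'ailleurs correctement le point d\'elicat (donner un sens au polygone de Newton d'un sch\'ema en groupes fini en le voyant comme tronqu\'e d'\'echelon~$1$ d'un groupe $\gerp$-divisible) et votre solution de repli --- lire les pentes sur le cristal de Dieudonn\'e \`a la Laumon --- est pr\'ecis\'ement ce que fait l'article. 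Ce que votre approche apporte~: une preuve de (b) sans aucune machinerie, et une d\'ecomposition g\'eom\'etrique transparente pour (c)~; ce que la citation de Laumon apporte~: la concision et un \'enonc\'e d\'ej\`a formul\'e dans le cadre exact des modules de Drinfeld.
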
 
\begin{proof}
C'est une cons\'equence directe de la d\'ecomposition du module de Dieudonn\'e de $\varphi[\mf p]$ donn\'ee dans \cite[Proposition 2.4.6]{LaumonDrinfeld}.
\end{proof}

\begin{dfn}
Un groupe $\mf p$-divisible (ou $\mf p$-Barsotti--Tate) est un $\mb F_{q^d}$-sch\'ema en $A_{\mf p}$-modules qui est limite directe de $\mb F_{q^d}$-sch\'ema en $A_{\mf p}$-modules finis 
\[
G = (G_1 \stackrel{i_1}{\longrightarrow} G_2 \stackrel{i_2}{\longrightarrow} \cdots)
\]
tel que pour tout $n \geq 1$ les propri\'et\'es suivantes sont vraies :
\begin{itemize}
\item[(i)] comme sch\'ema en $\mb F_{q^d}$-espaces vectoriels, $G_n$ se plonge dans $\mb G_a^M$, pour un entier $M$ convenable ; 
\item[(ii)] comme sch\'ema en $\mb F_{q^d}$-espaces vectoriels, la dimension de $G_n$ est $nr$, pour $r$ qui ne d\'epend pas de $n$ et qui est appell\'e le rang de $G$ ;
\item[(iii)] la suite 
\[ 0 \longrightarrow G_n \stackrel{i_n}{\longrightarrow} G_{n+1} \stackrel{ \pi ^n}{\longrightarrow} G_{n+1}
\]
est exacte. 
\end{itemize}
\end{dfn}

Soit $\mc G$ un groupe $\mf p$-divisible de dimension $1$ et rang $r$,  tronqu\'e d'\'echelon $1$, et $\mc D (\mc G)$ son module de Dieudonn\'e contravariant. C'est un  $A/\mf p$-espace vectoriel de dimension $r$ muni d'un Frobenius $F$ et d'un Verschiebung $V$ tels que $FV=VF=0$. De plus, on a que $\mc D (\mc G) / F(\mc D (\mc G))$ est un espace vectoriel sur $A/\mf p$ de dimension $1$. On fixe un espace vectoriel $N$ de dimension $r$ avec un sous-espace vectoriel $L$ de dimension $r-1$. On fixe aussi $C_{\bullet}$, un drapeau complet {$0 = C_0 \subsetneq C_1 \ldots \subsetneq C_{r}=N$}. Soit 
$$ f: (N,L) \rightarrow (\mc D (\mc G),F(\mc D (\mc G)))$$ un isomorphisme d'espaces vectoriels qui envoie $L$ dans $F(\mc D (\mc G))$. La position de $F(\mc D (\mc G))$ par rapport au drapeau $C_{\bullet}$ donne un \'el\'ement $w$ de $ \mathfrak{S}_{r-1}\setminus \mathfrak{S}_r$ qui est ind\'ependent du choix de $C_{\bullet}$ et $f$ \cite[\S 3.5 Example A]{MoonenWeyl}. 

On explicite le repr\'esentant $w$ : il y a un seul indice $i=i(w)$ tel que $\mr{dim} (F(\mc D (\mc G)) \cap C_i)=\mr{dim} (F(\mc D (\mc G)) \cap C_{i-1})$, et $w$ est la permutation 
$$ \left(\begin{array}{ccccccc}
1 & \cdots & i-1 & i & i+1 & \cdots & r\\
1 & \cdots & i-1 & r & i     & \cdots & r-1
\end{array}
\right).$$

La longueur $l(w)$ est ind\'ependante du choix de $w$ et peut \^etre facilement calcul\'ee par la formule :
\[
{l(w)=  \sum_{j \neq i} j - w(j)}=r-i(w).
\]
 
 \noindent
{On a l'galit $i(w)=h$, donc le cas supersingulier correspond  $i(w)=r$, $w=\mr{id}$. Le cas ordinaire correspond \`a $i(w)=1$, $w$ tant alors le seul lment de longueur $r-1$.}

Soit $\mf p\text{-}\mc{BT}_{1}$ l'espace des modules des  groupes $\mf p$-divisibles de dimension $1$ et rang $r$,  tronqu\'e en \'echelon $1$. Par la remarque qui prcde on a une flche de $\mf p\text{-}\mc{BT}_{1}$ vers $ \mathfrak{S}_{r-1}\setminus \mathfrak{S}_r$. On peut donc d\'emontrer  les analogues de \cite[Theorem 2.1.2]{MoonenDimension} et \cite[Theorem 6.10]{OortStratification} :
\begin{thm}\label{theoStrates}
Soit $M_{\gern,\gerp}$ la fibre spciale en $\gerp$.
Le fl\`eche naturelle qui envoie un module de Drinfeld dans sa $\mf p$-torsion induit un morphisme lisse et plat $ M_{\gern,\gerp} \rightarrow \mf p\text{-}\mc{BT}_{1}$. Composant avec la fl\`eche dans $ \mathfrak{S}_{r-1}\setminus \mathfrak{S}_r$, on obtient une stratification 
\begin{align*}
M_{\gern,\gerp} = \bigsqcup_{w} {M_{\gern,\gerp}}_{w}
\end{align*} 
La strate ${M_{\gern,\gerp} }_{w}$ est de dimension $l(w)$.
\end{thm}
\begin{proof} 
Le point crucial pour l'application effective de la mthode de Moonen est de montrer que la strate de dimension minimale, ici supersingulire, est non-vide. C'est toujours le cas pour les varits modulaires de Drinfeld comme il est bien connu car la cardinalit de la strate supersingulire est un nombre de classes, voir par exemple \cite[Thm. 4.3]{GekelerFinite}, mais aussi \cite{Gekeler1990}, \cite{Gekeler1992} et plus rcemment \cite{YuYu2004}. Comme Drinfeld a prouv la lissit du morphisme dans \cite[Sect.4]{Drinfeld74}, la non-vacuit des autres strates en dcoule par la stratgie axiomatis\'ee  de \cite[\S 3]{HeRapoport} et la formule de dimension (que Moonen a prouv en gnral sous l'hypothse de non-vacuit).
\end{proof} 
\begin{cor}\label{coro:ord}
Il y a une seule strate de dimension maximale, et elle correspond au cycle $w$ de longueur $r-1$ car $h=1$. C'est la {strate ordinaire} que l'on note dornavant $M^{\mr{ord}}_{\gern,\gerp}.$ 
\end{cor}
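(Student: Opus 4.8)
The plan is to read the statement off Theorem~\ref{theoStrates} once the Coxeter length $l(w)$ has been made explicit. First I would compute $l(w)$ in terms of $i=i(w)$. By the explicit description recalled just before Theorem~\ref{theoStrates}, the representative $w\in\mathfrak{S}_{r-1}\setminus\mathfrak{S}_r$ attached to an index $i$ fixes $1,\dots,i-1$, sends $i$ to $r$, and sends $j$ to $j-1$ for $i<j\le r$; equivalently $w$ is the cycle $(i,r,r-1,\dots,i+1)$ of length $r-i+1$. Substituting into $l(w)=\sum_{j\neq i}\bigl(j-w(j)\bigr)$, every term with $j<i$ vanishes while every term with $i<j\le r$ equals $1$, so $l(w)=r-i(w)$. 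In particular $w\mapsto i(w)$ is injective on $\mathfrak{S}_{r-1}\setminus\mathfrak{S}_r$, and hence the strata ${M_{\gern,\gerp}}_{w}$ of Theorem~\ref{theoStrates} have pairwise distinct dimensions.

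Next I would pin down the maximal stratum. Since $i(w)\ge 1$ we get $l(w)=r-i(w)\le r-1$, which is the dimension of $M_{\gern,\gerp}$, with equality exactly when $i(w)=1$. As $M_{\gern,\gerp}$ is the finite disjoint union of the locally closed subsets ${M_{\gern,\gerp}}_{w}$ and is of dimension $r-1$, at least one stratum has dimension $r-1$, and by the injectivity above there is exactly one: the one indexed by $i(w)=1$, i.e.\ by the $r$-cycle $w=(1,r,r-1,\dots,2)$, which is the cycle of length $r$ appearing in the statement. (Non-emptiness of this top stratum is in any case already part of Theorem~\ref{theoStrates}.) Being the complement of the union of the remaining, lower-dimensional strata, it is open in $M_{\gern,\gerp}$.

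It remains to recognise this unique top stratum as the ordinary locus. Recall from the discussion preceding Theorem~\ref{theoStrates} that $i(w)=h$, the height of $\varphi[\mf p]$; thus on the stratum $i(w)=1$ the universal Drinfeld module has height $h=1$, and by Proposition~\ref{prop:EO=New=prank} this is equivalent to $|\varphi[\mf p](\overline{\mb F}_{q})|=q^{d(r-1)}$, i.e.\ to $\varphi[\mf p]$ having \'etale part of maximal rank $q^{d(r-1)}$, which is exactly the defining condition for the Drinfeld module to be ordinary. Hence the unique stratum of maximal dimension is the ordinary locus, which we denote $M^{\mr{ord}}_{\gern,1}$. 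I expect no genuine obstacle here: the argument is bookkeeping, the only mildly delicate points being the length computation and the observation that distinct values of $i(w)$ already force the top-dimensional stratum to be unique; the one geometric input, namely that this top stratum is non-empty, is subsumed in Theorem~\ref{theoStrates}.
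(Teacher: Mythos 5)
Your argument is correct and is exactly the intended one: the paper states this corollary without proof, as an immediate consequence of Theorem~\ref{theoStrates} together with the computation $l(w)=r-i(w)$ (so the strata have pairwise distinct dimensions, the maximal value $r-1=\dim M_{\gern,\gerp}$ is attained only for $i(w)=1$, i.e.\ the $r$-cycle) and the identification $i(w)=h$ with Proposition~\ref{prop:EO=New=prank} characterising height $1$ as ordinarity. Your filling-in of these steps, including the remark that non-emptiness of the top stratum is already contained in the theorem, is accurate and complete.
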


On a besoin du lemme suivant pour \'etudier les modules de Drinfeld ordinaires:
\begin{lem}
\`A isomorphisme prs, il y a un seul groupe $\mf p$-divisible de dimension $1$ et rang $1$ qui correspond \`a la $\mf p^{\infty}$-torsion d'un module de Hayes \cite[\S 7.2]{GossFFA}. 
\end{lem}
\begin{proof}
On remarque que pour un groupe $\mf p$-divisible d'\'echelon $1$, c'est une cons\'equence  du th\'eor\`eme \ref{theoStrates} pour $r=1$. Plus g\'en\'eralement, c'est la m\^eme preuve que \cite{LubinTate} ; voir par exemple \cite[\S 12]{GrossHopkins}.
\end{proof}

Supposons pour l'instant que $(\mf p)$ soit principal ; soit $(\mc E, \varphi)$ le module de Drinfeld universel de rang $r$ et $\varphi_{\mf p}$ un polyn\^ome tel que $\varphi[\mf p]=\mr{Ker}(\varphi_{\mf p})$. Modulo $\mf p$, on peut l'\'ecrire : 
\begin{align*}
\varphi_{\mf p}(\tau)= a_{d}\tau^d + \ldots + a_{dr}\tau^{dr}.
\end{align*}

\noindent
Par la proposition \ref{prop:EO=New=prank} et le corollaire \ref{coro:ord}, on voit que $a_{d}$ donne une section non-nulle de $\omega^{q^d-1}$ pour tous les points du lieu ordinaire. En g\'en\'eral, il n'est pas clair que l'on puisse trouver un seul polyn\^ome pour d\'efinir $\varphi[\mf p]$ sur tout $M_{\gern,\gerp} $. Donc on choisit un \'el\'ement $\pi \in A$ de norme $q^{d'}$ tel que $\mf p = \pi A_{(\mf p)}$. 
\begin{lem}
Soit \begin{align*}
\varphi_{\pi} \equiv a_{d}\tau^d + \ldots + a_{d'r}\tau^{d'r} \bmod \mf p ;
\end{align*}
pour tout $x$ dans $M_{\gern,\gerp}^{\mr{ord}}$ on a que $a_d(x) \neq 0$, et pour $x$ dans $M_{\gern,\gerp} \setminus M_{\gern,\gerp}^{\mr{ord}}$ on  a $a_d(x)=0$.
\end{lem}
\begin{proof}
En effet, on peut d\'ecomposer la $\pi$-torsion 
\begin{align*}
\varphi[\pi]= \varphi[\mf p] \times \varphi[\mf n'],
\end{align*}
pour $\mf n'$ un id\'eal premier \`a $\mf p$. Si $x$ est un point ordinaire, alors $|\varphi[\pi](\overline{\mb  F}_{q})|=q^{d(r-1)} q^{(d'-d)r}$, ce qui force $a_d$ \`a \^etre non-nul ; si $x$ n'est pas ordinaire, alors $|\varphi[\pi](\overline{\mb  F}_{q})| < q^{d(r-1)} q^{(d'-d)r}$, ce qui force $a_d$ \`a \^etre nul. 
\end{proof}

Factorisons $\varphi_{\pi}$ par le Frobenius $\tau^d$ i.e.,
$\varphi_{\pi} = \Big( a_d + \ldots + a_{d'r} \tau^{d'r -d} \Big) \circ \tau^d,$ et dfinissons le Verschiebung \[ V := a_d + \ldots + a_{d'r} \tau^{d'r -d} .\]

\noindent
Identifions $\omega^{q^d-1} = \Hom \Big( (\Lie\: \mc E)^{\vee}, ((\Lie\: \mc E)^{ q^d})^{\vee} \Big).$

\begin{dfn}
L'invariant de Hasse associ  $M_{\gern,\gerp}$ est la section dfinie par le dual de la linarisation $dV$ du Verschiebung sur l'algbre de Lie de $\mc E$ c'est--dire \[ \mr{Ha} \in
\mr H^{0}(M_{\gern,\gerp},\omega^{q^d-1}) .\]
\end{dfn} 

\noindent
On voit immdiatement que l'invariant de Hasse est identifi au coefficient $a_d$ ci-haut.

L'invariant de Hasse d\'epend du choix de $\pi$, mais par le lemme qui prcde, son lieu d'annulation est ind\'ependant de $\pi$. 

Par un inoffensif abus de notation, on appelle aussi invariant de Hasse son prolongement naturel \`a $\overline{M}_{\gern,\gerp} $. On remarque que l'invariant de Hasse se relve videmment  toute la varit modulaire $\overline{M}_{\gern}$ ; on d\'enotera par $\widetilde{\mr{Ha}} $ son rel\`evement. 

\begin{lem}
Le lieu de non-annulation de $\mr{Ha}$ est un sous-schma ouvert et affine de $\overline{M}_{\gern,\gerp}$ not $\overline{M}^{\mr{ord}}_{\gern,\gerp}$. De plus,  le lieu d'annulation de $\mr{Ha}$  est r\'eduit.
\end{lem}
\begin{proof}
Le lieu de non-annulation est affine car $\mr{Ha}$ est une section globale d'un fibr\'e en droite sur un sch\'ema projectif.

Pour d\'emontrer le deuxi\`eme point, il suffit de v\'erifier qu'il n'y a que des z\'eros simples dans la strate ${M_{\gern,\gerp}}_{w}$, pour $w$ telle que $i(w)=2$ (qui est non-vide car le lieu d'annulation de $\mr{Ha}$ est une hypersurface).  Soit donc $x$ un point de ${M_{\gern,\gerp}}_{w}(\overline{\kappa}_{\gerp})$.  

La m\'ethode de Moonen d\'ej\`a utilis\'ee dans la preuve du th\'eor\`eme \ref{theoStrates} nous dit que le module de Dieudonn\'e d'une d\'eformation infinit\'esimale sur $\overline{\kappa}_{\gerp}[Y]/(Y^2)$ du $\gerp$-Barsotti--Tate associ\'e \`a $x$ a pour Frobenius (dans une base opportune) la matrice 
$$ \left( \begin{array}{ccccc}
0 & 0 & 0 & \cdots & 0 \\
1  & -Y & 0 & \cdots & 0 \\
0 & 0 & 1  & \cdots & 0 \\
0 & 0 & 0  & \ddots & 0\\
0 & 0 & 0 & \cdots & 1 
\end{array}
\right).$$
Il suffit donc d'identifier la lin\'earisation de l'action du Frobenius avec la multiplication par l'invariant de Hasse. Mais si $\pi$ est tel que $v_{\mf p}(\pi){=1}$ on voit, exactement comme dans la preuve de \cite[Lemma 2.5.4]{LaumonDrinfeld}, que l'action de $\pi$ sur le module de Dieudonn\'e se factorise par le Frobenius (car le Frobenius co\"incide avec $\tau^{d}$). On se restreint donc au sous-module engendr\'e par les deux premiers vecteurs de la base ; si $a_{d'}(x)=0$ alors $\pi$ agit comme le carr\'e du Frobenius, ce qui implique alors $v_{\mf p} (\pi) \geq 2$, d'o la contradiction. 
\end{proof}

\begin{lem}\label{lem:Hassecong1}
On peut choisir l'uniformisante $\pi$ de manire  ce que l'invariant de Hasse satisfasse : \[ \mr{Ha} \equiv 1 \mbox{ sur } \overline{M}^{\mr{ord}}_{\gern,\gerp} .\]
\end{lem}

\begin{proof}
Par la thorie des modules de Dieudonn de \cite{LaumonDrinfeld}, ceci revient  normaliser la matrice du Frobenius de faon  ce que \[  F = \left( \begin{array}{ccccc}
\pi & 0  & \cdots & 0 \\
0 & 1  & \cdots & 0 \\
0 & 0   & \ddots & 0\\
0 & 0  & \cdots & 1 
\end{array}
\right), \]
\noindent
dans une base o le premier vecteur engendre $\omega$ une fois restreint au lieu ordinaire.
\end{proof}

Un point de $\overline{M}^{\mr{ord}}_{\gern,\gerp} \setminus M^{\mr{ord}}_{\gern,\gerp}$ est un module de Drinfeld g\'en\'eralis\'e, dont la $\mf p^{\infty}$-torsion est une extension d'une partie connexe de $A_{\mf p}$ rang $1$ et une partie \'etale de rang $ 0 \leq r' < r$. 

En g\'en\'eral, par tir\'e-en-arri\`ere, on peut d\'efinir le lieu ordinaire  $\overline{M}^{\mr{ord}}_{\gern,m} $ dans $\overline{M}^{\mr{ord}}_{\gern, \gerp} \times \mr{Spec}(A/ \mf p^m)$. On d\'efinit donc un sch\'ema formel $\overline{M}^{\mr{ord}}_{\gern} := \varprojlim_m \overline{M}^{\mr{ord}}_{\gern,m}$.

\section{Th\'eorie de Hida}

 Dans cette section, on fixe une extension $\mc O$ finie et non-ramifi\'ee de $A_{\mf p}$ suffisamment grande. 
 
 Le but de cette section est de d\'evelopper la th\'eorie de Hida pour les formes modulaires de Drinfeld. On construit un espace convenable o\`u l'on peut plonger les formes modulaires de Drinfeld et o elles y sont denses pour la topologie $\pi$-adique. Nous employons pour notre construction la tour d'Igusa c'est--dire un recouvrement pro-\'etale de groupe de Galois $A_{\mf p}^{\times}$ du lieu ordinaire obtenu en trivialisant le dual de Cartier--Taguchi de la partie connexe de la $\gerp^{\infty}$-torsion du module de Drinfeld universel. Les fonctions sur la tour d'Igusa sont les formes modulaires de Drinfeld $\pi$-adiques qui nous intressent. Bien entendu, toute forme modulaire de Drinfeld d\'efinit une fonction sur la tour d'Igusa et donc par consquent une forme modulaire de Drinfeld $\pi$-adique.  Sur l'espace des formes modulaires de Drinfeld $\pi$-adiques, nous dfinissons deux op\'erateurs : l'op\'erateur $\mr U_{\pi}$ et la multiplication par (un rel\`evement d'une puissance de) l'invariant de Hasse. Ces deux oprateurs commutent et nous permettent de d\'emontrer d'une part que les formes classiques sont denses $\pi$-adiquement et d'autre part que la partie o\`u $\mr U_{\pi}$ agit comme un op\'erateur inversible, autrement dit la partie ordinaire, est de rang fini sur l'alg\`ebre d'Iwasawa  $\mc O \llbracket A^{\times}_{\mf p}  \rrbracket$, en utilisant en particulier un r\'esultat interm\'ediaire qui stipule que la multiplication par l'invariant de Hasse induit un isomorphisme entre les formes ordinaires de poids $k$ et les formes de poids $k+q^d-1$.
 
 \subsection{La tour d'Igusa et les formes modulaires $\mf p$-adiques}
On fixe une fois pour toutes un groupe de $\gerp$-Barsotti--Tate de dimension $1$ et hauteur $1$ sur $A_{\mf p}$ qu'on appelle $\mr{CH}$ (d'apr\`es Carlitz et Hayes), on fixe aussi une diffrentielle invariante $\textup{d}z$ pour $\mr{CH}$. 
 On a vu dans la section pr\'ec\'edente que la partie connexe du $\gerp$-Barsotti-Tate $E[\mf p^{\infty}]$ d'un module de Drinfeld sur $\overline{\kappa}_{\gerp}$  est isomorphe \`a $\mr{CH}$. 
 Suivant \cite[\S 4, Thm 4.3]{Taguchi}, le dual de Cartier-Taguchi de $\mr{CH}$ est un groupe de $\gerp$-Barsotti--Tate \'etale.
 Nous utilisons la notation $G^{\vee}$ pour le dual de Cartier-Taguchi de $G$.

 
 
 On construit donc, suivant le modle de Hida \cite{HPEL}, une {tour d'Igusa}. Plus pr\'ecis\'ement, soit $(\overline{\mc E}^{\mr ord},\overline{\varphi})$ le tir\'e-en-arri\`ere du module de Drinfeld g\'en\'eralis\'e sur le lieu ordinaire. On d\'efinit ainsi la tour d'Igusa :
\begin{align*}
\mr{Ig}_{n,m} := &  \mr{Isom}_{\overline{M}^{\mr{ord}}_{\gern,m} }\left(
{(\overline{\varphi}[\mf p^{n}]^{\circ})}^{\vee}, \frac{1}{\mf p^{n}} A_{\mf p}/A_{\mf p}
\right), \\
\mr{Ig} := & \mr{Isom}({(\overline{\varphi}[\mf p^{\infty}]^{\circ})}^{\vee}, F_{\mf p}/A_{\mf p}) =  \varprojlim_m \varinjlim_n \mr{Ig}_{n,m}.
\end{align*}
 
 \noindent
 Le sch\'ema formel $\mr{Ig}$ est un pro-rev\^etement \'etale du lieu ordinaire $\overline{M}^{\mr{ord}}_{\gern} $ de groupe de Galois $A^{\times}_{\mf p}$. Nous remercions Richard Pink d'avoir port notre attention sur la question de l'irrducibilit de la tour d'Igusa :
\begin{thm}
Soit  $\overline{M}^{\mr{ord},\circ}_{\gern} $ une composante irr\'eductible de $\overline{M}^{\mr{ord}}_{\gern} $ ; alors la restriction de $\mr{Ig}$ \`a $\overline{M}^{\mr{ord},\circ}_{\gern} $ est  irr\'eductible. 
 \end{thm}
 \begin{proof}
 On emprunte la strat\'egie de Hida \cite{HidaIrrIg, HPAF}  : on utilise un point $x=((E,\varphi),\alpha)$ { du lieu ordinaire de la vari\'et\'e de Drinfeld  ayant beaucoup d'automorphismes.  Notons que tout module de Drinfeld a au moins l'anneau $A$ comme sous-anneau d'endomorphismes.
L'ide est que si l'image de ces automorphismes est dense dans le groupe de Galois $A_{ \gerp}^{\times}$, le groupe de d\'ecomposition de $x$ est alors $A^{\times}_{\mf p}$ tout entier, ce qui entrane l'irr\'educibilit\'e.
 Soit donc $A_{(\gern \gerp)}$ l'anneau des \'el\'ements $a$ de $F$ tels que $v_{\mf q}(a) \geq 0$ pour tous les premiers $\mf q$ divisant  $\gern \gerp$. Si $a \in A_{(\gern \gerp)}^{\times}$ et $a \equiv 1 \bmod \gern$, on a alors 
 $$ ((E,\varphi),\alpha, \gamma) \cong (\mr{Im}(a), a \circ \alpha, \gamma \circ a^{-1}),$$
o l'isomorphisme est induit par l'isog\'enie $a^{-1}$. On a un plongement naturel $A_{(\gern \gerp)}^{\times} \rightarrow A_{\gerp}^{\times}$: l'action ne change pas $x$ par la condition de congruence modulo $\gern$. En plus, par le th\'eor\`eme des restes chinois, l'image des \'el\'ements de  $A_{(\gern \gerp)}^{\times} $ ayant la congruence donn\'ee est dense dans $A_{\gerp}^{\times} $ et est contenue dans le groupe de d\'ecomposition en $x$, qui nous suffit pour conclure.}
 \end{proof}
 On a une fl\`eche naturelle : \[
\begin{array}{cccc}
\mr{HT}: & \mr{Ig} & \rightarrow & \mc T \times_{\overline{M}^r_{\gern}} \overline{M}_{\gern}^{\mr{ord}}\\
 & \gamma & \mapsto & (1 \mapsto \gamma^{\vee}_* \textup{d}z)
\end{array},
\]
pour $\gamma^{\vee}$ le dual de Cartier-Taguchi de $\gamma$.
\noindent
Cette fl\`eche nous permet de voir les formes modulaires de Drinfeld comme fonctions sur la tour d'Igusa. En fait, pour tout $n \in \N$, on a la fl\`eche 
\begin{align}\label{eq:classicaltopadic}
 \mr{H}^0(M_{\gern} \times \mr{Spec}(\mc O/ \mf p^m), \omega^{ k}) \rightarrow \mc O_{\mr{Ig}_{n,m}}.
\end{align}
Il est donc naturel de d\'efinir ces deux espaces de formes modulaires $\mf p$-adiques : 
\[ 
{V} :=   \varprojlim_m \varinjlim_n \mc O_{\mr{Ig}_{n,m}} \mbox{ et } 
{\mc V} :=  \varinjlim_{m,n} \mc O_{\mr{Ig}_{n,m}}. 
\]

L'espace ${V}$ est muni de la topologie profinie et l'espace ${\mc V}$ de la topologie discr\`ete. Comme l'action de $A_{\mf p}^{\times}$ est continue pour ces deux topologies,  ce sont des modules pour l'alg\`ebre $\Lambda :=\mc O \llbracket A^{\times}_{\mf p}  \rrbracket$, respectivement compact et discret. Comme $A^{\times}_{\mf p}$ est topologiquement engendr\'e par un nombre infini de variables, on a l'isomorphisme 
$$ \Lambda \cong \mc O \left[ \frac{\mb Z}{(q^d-1) \mb Z}\right] \llbracket T_1, T_2, \ldots \rrbracket, $$
o\`u on fait correspondre \`a chaque $T_i +1$ un g\'en\'erateur $1+z_i$ de $1+\pi A_{\gerp}$. 
Ces deux espaces ne contiennent pas seulement les formes de poids $k$ et niveau premier \`a $\mf p$, mais plusieurs formes avec niveau en $\mf p$ tr\`es profond (quitte \`a prendre une extension de $A_{\gerp}$ assez ramifi\'ee). Soit $P_{k}$ le noyau du morphisme $\calO$-linaire dfini comme suit sur les monmes:
$$
\begin{array}{cccc}
[k]: & \Lambda & \longrightarrow & \mc O \\
 & z  & \mapsto & z^k 
\end{array}.
$$

On peut voir $\Lambda_{\infty} := \mc O [[1 + \pi A_{\gerp}]]$ comme un sous-anneau des fonctions continues de $\mb Z_p$ vers $\mc O$ ; plus pr\'ecis\'ement on rappelle que par le th\'eor\`eme de Mahler toute fonction continue $f$ sur $\mb Z_p$ peut s'\'ecrire comme une s\'erie 
$$ f(s)= \sum_{j=0}^{\infty} a_j(f) { s \choose j},$$
o\`u  $ { s \choose n}$ sont les coefficients binmiaux polynmiaux en $s$. 
On a le morphisme d'alg\`ebres :
 
 \begin{equation} \label{Mahler}
 \begin{array}{ccc}
\mc O \llbracket 1 + \pi A_{\gerp} \rrbracket & \longrightarrow & \mc C(\mb Z_p, \mc O) \subset \mc O \left[  { s \choose 1}, { s \choose 2}, \ldots \right] \\
  1+ z   & \mapsto & (1+z)^s= \sum_{j=0}^{\infty}  { s \choose j} z^j
\end{array}.
\end{equation}

{On remarque que les polynmes sont denses  dans les fonctions continues sur $A_{\gerp}^{\times}$ \cite[Corollary 1.4]{UdiBordeaux} et que les entiers positifs dans une classe de congruence modulo $q^d-1$ fix\'ee sont denses dans $\mb Z_p$ ; on en dduit que les idaux $P_k$ sont Zariski denses dans $\Lambda$ et que le morphisme ci-dessus est injectif. }
 \begin{rmk}
A priori $\Lambda_{\infty}$ est un anneau topologique pour la topologie induite par l'id\'eal maximal. Si l'on impose sur $\mc C(\mb Z_p, \mc O)$ la topologie $\pi$-adique induite par la valuation $v_{\pi}(f)=\mr{min}_{s \in \mb Z_p}v_{\pi}(f(s))$, le morphisme d'alg\`ebres ci-dessus est continu. Dans   $\mc C(\mb Z_p, \mc O)$, on voit que $\pi$ divise l'image de $T_i$ mais cette divisibilit\'e n'est pas valable  dans $\Lambda_{\infty}$.
 \end{rmk}

On donne deux lemmes tr\`es utiles : 
\begin{lem}\label{lem:formeslieuordinaire}
Pour tout poids $k$, on a
$$
\mr H^0(\overline{M}^{\mr{ord}}_{\gern,m}, \omega^k )= \bigcup_l \frac{\mr H^0(\overline{M}_{\gern,m}, \omega^{ k+l(q^d-1)})}{\mr{Ha}^l}.
$$ 
\end{lem}
\begin{proof}
Il suffit de remarquer que le faisceau $\omega^{q^d-1}$  est trivialis\'e par $\mr{Ha}$ sur $\overline{M}^{\mr{ord}}_{\gern,m}$. 
\end{proof}

\begin{lem}\label{lemme:reductionmodp}
Pour tout poids $k$, on a le changement de base
$$
\mr H^0(\overline{M}^{\mr{ord}}_{\gern,m}, \omega^k )=\mr H^0(\overline{M}^{\mr{ord}}_{\gern}, \omega^k ) \otimes \mc O/\mf p^m.
$$ 
\end{lem}
\begin{proof}
Soit $\pi$ une uniformisante de $\mc O$; on a une suite exacte de faisceaux 
$$
0 \longrightarrow \omega^k \stackrel{\times \pi^m}{\longrightarrow} \omega^k \longrightarrow \omega^k \otimes A_{\mf p}/ \mf p^m \rightarrow 0.
$$
Pour conclure, il suffit de prendre la suite exacte longue en cohomologie et d'utiliser que $\omega^k$ est un faisceau coh\'erent sur un sch\'ema affine. 
\end{proof}

On a la proposition suivante,  \`a comparer avec \cite[Corollary 2.3]{HPEL}.

\begin{prop}\label{prop:dense}
La fl\`eche naturelle $$ 
\bigoplus_k \mc M_k \rightarrow V
$$ a image dense. 
\end{prop}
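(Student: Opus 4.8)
The plan is to adapt the proof of \cite[Corollary 2.3]{HPEL}. Write $V=\varprojlim_m V_m$ with $V_m:=\varinjlim_n\calO_{\mr{Ig}_{n,m}}$. Since $\mr{Ig}_{n,m}=\mr{Ig}_{n,m+1}\times_{\calO/\gerp^{m+1}}\calO/\gerp^m$, the reduction maps $V_{m+1}\to V_m$, hence also $V\to V_m$, are surjective; as each $V_m$ carries the discrete topology, density of the image of $\bigoplus_k\mc M_k$ in $V$ is equivalent to surjectivity of $\bigoplus_k\mc M_k\to V_m$ for every $m$. So one fixes $m$ and works modulo $\gerp^m$.

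The first step is to pass from $\overline{M_\gern}$ to its ordinary locus. Via $\mr{HT}$, the tautological differential $\gamma^{*}\textup{d}z$ trivialises (the pullback of) $\omega$ over the Igusa tower modulo $\gerp^m$, so for every $k\in\Z$ there is a natural map $\mr H^0(\overline{M}^{\mr{ord}}_{\gern,m},\omega^{\otimes k})\to V_m$; negative $k$ are allowed because $\omega^{\otimes(q^d-1)}$ is trivialised on $\overline{M}^{\mr{ord}}_\gern$ by $\widetilde{\mr{Ha}}$, a unit modulo $\gerp^m$ by Lemma~\ref{lem:Hassecong1}. By Lemmas~\ref{lem:formeslieuordinaire} and \ref{lemme:reductionmodp}, any section in $\mr H^0(\overline{M}^{\mr{ord}}_{\gern,m},\omega^{\otimes k})$ has the form $g/\mr{Ha}^{l}$ with $g\in\mr H^0(\overline{M}_{\gern,m},\omega^{\otimes(k+l(q^d-1))})$, i.e. $g$ is the reduction of a classical form in $\mc M_{k+l(q^d-1)}$; and since $1-\widetilde{\mr{Ha}}$ is nilpotent modulo $\gerp^m$ on $\overline{M}^{\mr{ord}}_{\gern,m}$, the element $\mr{Ha}^{-l}$ is a polynomial in $\widetilde{\mr{Ha}}$ modulo $\gerp^m$, so $g/\mr{Ha}^{l}$ is a finite sum of images of the classical forms $g\,\widetilde{\mr{Ha}}^{\,i}\in\mc M_{k+(l+i)(q^d-1)}$. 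Therefore the image of $\bigoplus_k\mc M_k$ in $V_m$ contains the image of $\bigoplus_{k\in\Z}\mr H^0(\overline{M}^{\mr{ord}}_{\gern,m},\omega^{\otimes k})$.

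It then remains to show that this last image is all of $V_m$, i.e. that every $\gerp$-adic modular form modulo $\gerp^m$ is a finite combination of sections of the $\omega^{\otimes k}$ pulled back along $\mr{HT}$. This is the heart of the matter and is the analogue of the decisive point in \cite[Cor.~2.3]{HPEL}: one uses the action of the Galois group $A^{\times}_{\gerp}$ of the Igusa tower — its prime-to-$p$ part $(A_\gerp/\gerp)^\times$, of order $q^d-1$, acting semisimply and producing the weight decomposition modulo $q^d-1$, and the Hodge--Tate filtration modulo $\gerp^m$ taking care of the pro-$p$ part — the irreducibility of $\mr{Ig}$ established above guaranteeing that no functions are lost. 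I expect this last step, namely controlling the deep level-at-$\gerp$ part of the Igusa tower by classical forms of large weight, to be the main obstacle; once it is granted, the remaining assertions are formal in view of the three lemmas recalled immediately before the statement.
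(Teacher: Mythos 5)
Your overall scaffolding is the same as the paper's: reduce density to a statement at each finite level $m$, then convert sections of $\omega^{\otimes k}$ over $\overline{M}^{\mr{ord}}_{\gern,m}$ into classical forms via Lemme~\ref{lem:formeslieuordinaire} and the fact that a suitable power of $\mr{Ha}$ is identically $1$ on $\mc O_{\mr{Ig}_{n,m}}$. But the step you explicitly defer --- that every $f\in\mc O_{\mr{Ig}_{n,m}}$ is a \emph{finite} sum of weight-homogeneous pieces, i.e.\ of sections of the sheaves $\omega^{\kappa}$ pulled back along $\mr{HT}$ --- is the actual content of the proposition, and what you offer in its place is not an argument. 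Invoking the semisimple action of the prime-to-$p$ part $(A_{\gerp}/\gerp)^{\times}$ together with ``the Hodge--Tate filtration taking care of the pro-$p$ part'' does not work as stated: the wild part $1+\pi A_{\gerp}$ of the Galois group of the Igusa tower is pro-$p$ and the coefficients are $\gerp$-power torsion, so there is no semisimple isotypic decomposition for it, and it is precisely this infinite-dimensional pro-$p$ direction (the source of the variables $T_1,T_2,\dots$ of $\Lambda_{\infty}$) that must be controlled. The irreducibility of $\mr{Ig}$ plays no role here.

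The paper closes this gap by a density argument in the group direction: by the density of the monomial functions $z\mapsto z^{\kappa}$, $\kappa\in\mb Z_p$, among continuous functions on $1+\pi A_{\gerp}$ (the result quoted from \cite{UdiBordeaux}, cf.\ the Mahler expansion \eqref{Mahler}), every $f\in\mc O_{\mr{Ig}_{n,m}}$ is, modulo $\gerp^m$, a finite sum of components $f_{\kappa}$ homogeneous of weight $\kappa$ for the full Galois action; each $f_{\kappa}$ is then a section of $\omega^{\kappa}$ over the ordinary locus, written as $g/\mr{Ha}^{l}$, and the Hasse-invariant manipulation you describe finishes the proof. Without this (or an equivalent) input your argument is incomplete. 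A secondary point: when you assert that $g\in\mr H^0(\overline{M}_{\gern,m},\omega^{\otimes(k+l(q^d-1))})$ ``is the reduction of a classical form in $\mc M_{k+l(q^d-1)}$'', you are silently using surjectivity of $\mc M_{k+l(q^d-1)}\rightarrow \mr H^0(\overline{M}_{\gern,m},\omega^{\otimes(k+l(q^d-1))})$, whose obstruction is the $\gerp^m$-torsion of $\mr H^1(\overline{M}_{\gern},\omega^{\otimes(k+l(q^d-1))})$; the paper devotes the first half of its proof (an exact sequence and a finiteness argument for this $\mr H^1$) to dealing with exactly this, and you should too.
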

  \begin{proof}
  Soit $\overline{M}_{\gern}$ la vari\'et\'e de Drinfeld compactifi\'ee sur $\mc O$. Comme $\mc M_k$ est de type fini sur $\mc O$, on a la suite exacte suivante 
  $$
  0 \rightarrow \mr H^{0}(\overline{M}_{\gern},\omega^k)\otimes \mc O/\mf p^m \rightarrow \mr H^{0}(\overline{M}_{\gern,m},\omega^k) \rightarrow \mr H^{1}(\overline{M}_{\gern},\omega^k)[\mf p^m] \rightarrow 0.
  $$
  Mais le terme de droite est fini, car $\mr H^{1}(\overline{M}_{\gern},\omega^k)$ est un module de type fini. Donc on a 
  $$ 
 \varprojlim_m  \mr H^{0}(\overline{M}_{\gern},\omega^k)\otimes \calO /\mf p^m \rightarrow \mr H^{0}(\overline{M}_{\gern},\omega^k)
  $$
  et il suffit de d\'emontrer que tout $f \in \mc O_{\mr{Ig}_{n,m}}$ est image d'une forme classique pour $m$ assez grand. 
  
  Comme les fonctions alg\'ebriques sur $1 + \mf pA_{\mf p}$ sont denses dans les fonctions analytiques, on a que chaque $f \in \mc O_{\mr{Ig}_{n,m}}$ est une somme de $f_{\kappa}$, o l'on crit seulement $\kappa$ pour indiquer l'usage d'un caract\`ere de $1 + \mf pA_{\mf p}$ dans $\mb C_{\mf p}^{\times}$ du type $z \mapsto z^{\kappa}$, avec $\kappa \in \mb Z_p$. Chaque $f_{\kappa}$ est donc une section de $\omega^{\kappa}$ sur $\overline{M}^{\mr{ord}}_{\gern,m}$ et on a vu qu'on peut l'\'ecrire comme $g/\mr{Ha}^l$. Par le lemme \ref{lem:Hassecong1} il existe une puissance de l'invariant de Hasse assez grande pour que $ \mr{Ha}^{l'}$ soit identiquement $1$ sur $\mc O_{\mr{Ig}_{n,m}}$ ; on multiplie au numrateur et au dnominateur $f_{\kappa}$ par $ \mr{Ha}^{l'-l}$  et on obtient $f_{\kappa}=g\mr{Ha}^{l'-l}/\mr{Ha}^{l'}=g\mr{Ha}^{l'-l}$ qui est une forme modulaire de Drinfeld classique.
  \end{proof} 
  
  \begin{cor}
  L'espace $V$ concide avec l'espace des formes modulaires de Drinfeld $\gerp$-adiques de \cite[4.6]{Hattori} en rang 2.
  \end{cor}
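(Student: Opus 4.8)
The plan is to unwind both definitions and produce a canonical identification, the only real content being the matching of the two models of the Igusa tower. Recall that in rank $2$ the $\gerp$-divisible group of an ordinary generalized Drinfeld module over $\overline{\kappa}_{\gerp}$ is an extension of an \'etale $A_{\gerp}$-module of rank $\leq 1$ by a connected part of dimension $1$ and height $1$, which by the lemma above is isomorphic to $\mr{CH}$. Hattori's space of $\gerp$-adic Drinfeld modular forms is, by \cite[4.6]{Hattori}, built exactly as we have built $V$: one forms the Igusa-type tower over $\overline{M}^{\mr{ord}}_{\gern,m}$ classifying trivializations of the connected $\gerp^{n}$-torsion of the universal (generalized) Drinfeld module, takes $\mathcal O$-valued global functions, passes to $\varinjlim_{n}$ and then $\varprojlim_{m}$. (If one prefers Hattori's formulation as a ring of Katz-style rules on ordinary trivialized test objects, the comparison with functions on the Igusa tower is the usual representability/Yoneda argument.)

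First I would record the identification of formal schemes $\mr{Ig}_{n,m} \cong \mr{Ig}^{\mr{Hat}}_{n,m}$ over $\overline{M}^{\mr{ord}}_{\gern,m}$, functorial in $m$ and $n$. Both are torsors under $(A_{\gerp}/\gerp^{n})^{\times}$, and the connected part of the $\gerp^{n}$-torsion of the universal object is, fppf-locally on the ordinary locus, isomorphic to $\mr{CH}[\gerp^{n}]$ in a way unique up to $(A_{\gerp}/\gerp^{n})^{\times}$; this follows from the lemma identifying the connected $\gerp^{\infty}$-torsion of an ordinary Drinfeld module with $\mr{CH}$, exactly as in the definition of $\mr{Ig}$. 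Taking $\mathcal O$-valued functions and passing to the limits yields an isomorphism of topological $\mathcal O$-algebras $V \overset{\sim}{\longrightarrow} V^{\mr{Hat}}$, and it is $A_{\gerp}^{\times}$-equivariant since the Galois action on both towers is the tautological one. Here one must pin down that Hattori's normalization of $\mr{CH}$ and of its invariant differential $\mathrm dz$ agrees with ours up to an element of $A_{\gerp}^{\times}$ — which is why we have fixed $\mr{CH}$ and $\mathrm dz$ once and for all — a discrepancy by a unit not affecting the underlying $\Lambda_{\infty}$-module.

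It remains to see that this isomorphism is compatible with the two arrows from classical forms, i.e. with \eqref{eq:classicaltopadic}. This is where one uses the $\mr{HT}$ map: a weight-$k$ form, viewed as a function on $\mr{Ig}$, is $\gamma \mapsto f(\gamma^{*}\mathrm dz)$, and the same recipe (with the matched differential) gives Hattori's embedding of $\mc M_{k}$; hence the two embeddings agree under $V \overset{\sim}{\longrightarrow} V^{\mr{Hat}}$. Since $\bigoplus_{k}\mc M_{k}$ has dense image in $V$ by Proposition \ref{prop:dense}, and Hattori establishes the analogous density for $V^{\mr{Hat}}$, and both carry the profinite topology for which they are complete and separated, the map on classical forms already determines the isomorphism uniquely, so it coincides with the one produced above. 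The main obstacle is therefore not conceptual but bookkeeping: reconciling the choices of compactification $\overline{M_{\gern}}$, of $\mr{CH}$, and of the invariant differential between the two constructions; granting these normalizations the corollary follows.
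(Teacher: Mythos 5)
The paper gives no proof of this corollary at all: it is stated as an immediate consequence of the definitions and of Proposition \ref{prop:dense}, which is why it sits directly after that density statement. Your argument is a correct and careful unwinding of exactly that implicit reasoning --- identifying the two Igusa towers as $(A_{\gerp}/\gerp^{n})^{\times}$-torsors, matching the normalizations of $\mr{CH}$ and $\mathrm{d}z$, checking compatibility with \eqref{eq:classicaltopadic}, and invoking density of classical forms on both sides --- so it matches the intended route, just spelled out in full.
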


  \subsection{L'op\'erateur $\mr{U}_{\pi}$}
  
  Pour toute matrice $g$ dans $\mr{GL}_r(\widehat{A})$ et niveau $K$,  suivant  \cite[Proposition 4.11]{Pink2013}, on peut d\'efinir un morphisme fini 
  \[
  J_g : \overline{M}_{g^{-1}Kg} \rightarrow \overline{M}_K.
  \] 
  
 Comme dans  \cite[(6.9)]{Pink2013}, on d\'efinit donc l'op\'erateur de Hecke \[ \mr T^{\circ}_g: \mr H^0(\overline{M}_K,\omega^{ k})\rightarrow \mr H^0(\overline{M}_K,\omega^{ k}),\]
 \noindent {qui dpend uniquement de la double classe $[KgK]$. 
On remarque que dans {\it loc. cit.} cet op\'erateur est not\'e $T_g$, mais comme dans la suite on devra renormaliser les op\'erateurs de Hecke en $\gerp$, on r\'eserve la notation $\mr T_g$ pour les op\'erateurs normalis\'es. 
Ce morphisme est induit par la correspondance
  \[
\overline{M}_K \stackrel{ J_1 }{\longleftarrow}  \overline{M}_{g^{-1}Kg \cap K} \stackrel{J_g}{\longrightarrow} \overline{M}_K
  \]
  en utilisant \cite[(6.4)]{Pink2013}.  Comme la multiplication \`a gauche par $g$ induit une bijection
  \[
 g^{-1} K g \cap K \setminus K \leftrightarrow  K \setminus K g K,
  \]
on peut aussi \'ecrire 
  \[ \mr T^{\circ}_g f = \sum_{g' \in K \setminus K g K } J_{g'}^{\ast} f.\]}
  On s'int\'eresse au cas o\`u $g$ est la matrice diagonale $[1, \pi, \ldots, \pi]$. Il faut considrer deux cas de figure : le premier est quand le niveau $K$ est premier \`a $\mf p$; dans ce cas, on a un op\'erateur ${\mr T}^{\circ}_{\pi}$. Le deuxi\`eme est quand $K= K_0(\mf p) $ ou $K= K_1(\mf p) $ o\`u l'on a :
\begin{align*}
K_0(\mf p):=& \set{g \in \mr{GL}_r(\widehat{A}) | g \equiv \left( \begin{array}{cc}
\ast_{1 \times 1}         & \ast_{1 \times (r-1)} \\
0           &  \ast_{(r-1) \times (r-1)}
\end{array}
\right) \bmod \mf p}  ; \\
K_1(\mf p):= & \set{g \in \mr{GL}_r(\widehat{A}) | g \equiv \left( \begin{array}{cc}
1         & \ast_{1 \times (r-1)} \\
0           &  \ast_{(r-1) \times (r-1)}
\end{array}
\right) \bmod \mf p}  ;
\end{align*}

\noindent
dans ce cas on a un op\'erateur ${\mr U}^{\circ}_{\pi}$. On va montrer qu'un oprateur $\mr U_{\pi}$ peut \^etre d\'efini m\^eme sur la vari\'et\'e sans niveau en $\gerp$, quitte \`a se restreindre  au lieu ordinaire. 

  En fait, sur le lieu ordinaire, on peut d\'efinir une correspondance $${M}_{\gern}^{\mr{ord}} \stackrel{p_1}{\longleftarrow}  \gerC_v \stackrel{p_2}{\longrightarrow} {M}_{\gern}^{\mr{ord}}$$ o\`u :
\begin{itemize}
\item les points de $\gerC_v$ sont des couples $(x,H)$ pour $x \in {M}_{\gern}^{\mr{ord}}$ et $H$ un sous-groupe \'etale de $\varphi_x[\mf p]$ qui est un supplmentaire de $\varphi[\mf p]^{\circ}$ ;
\item $p_1(x,H)=x$ ;
\item $p_2(x,H)=x'$, pour $x'$ tel que $(E_{x'},\varphi)=( E_x / H, \varphi)$.
\end{itemize}
Le projection $p_1$ est isomorphisme et $p_2$ est morphisme fini et \'etale (car on quotiente par un sous-groupe \'etale).
  
  On peut donc d\'efinir l'op\'erateur
\[
\mr U_{\pi} (f)(x,\gamma):= \frac{1}{\pi^{r-1}} \sum_{H} f(p_2(x,H),\gamma'), \;\;\; {f \in \mc O_{\mr{Ig}} .}
\]
La structure de niveau $\gamma'$ est d\'efinie naturellement par composition, cf. \cite[pp.~36]{HPEL} (elle d\'epend quelque peu du choix de l'uniformisante $\pi$).

\begin{lem} \label{lemmeoupire}
L'op\'erateur $\mr U_{\pi}$ est bien d\'efini sur $\mc O_{\mr{Ig}_{n,m}}$ ; de plus, il envoie $\mc O_{\mr{Ig}_{n,m}}[k]$ vers $\mc O_{\mr{Ig}_{n-1,m}}[k]$.
\end{lem}
\begin{proof}
Pour montrer qu'il s'\'etend au bord, on remarque que sur  $\mc O_{\mr{Ig}_{0,m}}$ l'oprateur $\mr U_\pi$ co\"incide, au facteur de normalisation pr\`es, avec l'op\'erateur $\mr U^{\circ}_{\pi}$  d\'efini par Pink :
\[
\mr U_{\pi}^{\circ} (f)(x,\gamma):=  \sum_{g' \in g^{-1} K_0(\gerp) g \setminus K_0(\gerp)}  J_{gg'}^* f(x),
\]
pour $g=[1, \pi, \ldots, \pi]$.

On d\'efinit donc 
\[
\mr U_{\pi} (f)(x,\gamma):= \frac{1}{\pi^{r-1}} \sum_{g'} f(J_{gg'}^{-1}(x),\gamma'), \;\;\; f \in \mc O_{\mr{Ig}}.
\]

\noindent
pour $g'$ dans $[1, \pi, \ldots, \pi]^{-1} K_0(\gerp) [1, \pi, \ldots, \pi]\setminus K_0(\gerp)$ et 
$$
  J_{g'} : \overline{M}_{{g'}^{-1}K(\gern)g'} \rightarrow \overline{M}_\gern.
  $$

Fibre par fibre sur l'ouvert ${M}_{\gern}^{\mr{ord}}$, l'isognie universelle est donne par $y_i \mapsto y_i^{q^d} + \pi y_i = (y_i^{q^d-1} + \pi)y_i$, pour $i=1, \dots, r-1$. La trace de $y_i^{q^d}$ est zro, et il reste donc terme par terme pour chacun des indices $1 \leq i \leq r-1$ une contribution qui est dans l'idal $(\pi)$. Il s'ensuit que l'image de la trace est contenue dans l'id\'eal engendr\'e par  $\pi^{r-1}$.

De plus, l'oprateur ainsi dfini envoie $\mc O_{\mr{Ig}_{n,m}}[k]$ sur $\mc O_{\mr{Ig}_{n-1,m}}[k]$. Pour le dmontrer, il faut v\'erifier l'\'egalit\'e des doubles classes :
\[
K_0(\gerp^n)[1, \pi, \ldots, \pi] K_0(\gerp^n)=K_0(\gerp^n)[1, \pi, \ldots, \pi] K_0(\gerp^{n-1}).
\] Le mme argument { de th\'eorie de groupes} que dans \cite[dernire ligne de la preuve de la prop. 5.3]{PilHida}, cf. \cite[Eq. (2.7), p.18]{HPEL} fonctionne aussi bien dans le cas de corps de fonctions.


\end{proof}

\begin{remark} En fort contraste avec la thorie classique, on ne s'int\'eresse ici qu' un seul op\'erateur de Hecke en $\gerp$. Ceci est d au fait que le polyn\^ome de Hecke se factorise comme $X^{r-1}(X-a_{\gerp})$, car tous les autres coefficients sont divisibles par la norme de $\gerp$ qui est nulle dans $A$, et par cons\'equent il y a au plus une seule racine non-nulle.
\end{remark}

On peut voir que ${\mr T}_{\pi}$ est induit aussi par une correspondance 
 $$\overline{M}_{\gern} \stackrel{q_1}{\longleftarrow}  \gerC_{\mf p} \stackrel{q_2}{\longrightarrow} \overline{M}_{\gern}$$
 qui param\'etrise les sous-groupes de rang $r-1$ de la $\mf p$-torsion d'un module de Drinfeld. On peut donc \'ecrire 
 \[
{ \mr T}_{\pi}: \mr H^0(\overline{M}_{\gern},\omega^{k})\rightarrow \mr H^0(\gerC_{\mf p}, q_2^*\omega^{k}) \stackrel{\iota(k)}{\longrightarrow} \mr H^0(\gerC_{\mf p},q_1^*\omega^{k})\stackrel{\frac{1}{\pi^{r-1}}\mr{Trace}}{\longrightarrow}\mr H^0(\overline{M}_{\gern},\omega^{k}),
 \]
 o\`u $\iota(k)$ est l'isomorphisme (d\'efini seulement sur $F_{\mf p}$ !) entre $q_2^*\omega^{k}$ et $q_1^*\omega^{k}$. 
\begin{lem}\label{lemme:TpUp}
Si $k \geq r$, alors on  a $ \mr T_{\pi} \equiv \mr U_{\pi} \bmod \mf p$. En particulier, $\mr T_{\pi}$ pr\'eserve la structure enti\`ere. 
\end{lem}
\begin{proof}
On peut calculer explicitement les matrices qui interviennent dans la d\'ecompo--sition des doubles classes associ\'ees \`a $\mr T_{\pi}$ et $\mr U_{\pi}$ comme dans \cite[pp.~467-468]{hida_smf} ; on donnera plutt une preuve g\'eom\'etrique. 

On va montrer que ${\mr T}_{\pi} f(x) \equiv {\mr U}_{\pi}f(x) \bmod \mf p$ pour toute forme modulaire de Drinfeld $f \in \mc M_k$ et pour tout $x \in M^{\mr{ord}}_{\gern,1}$ ; par densit\'e du lieu ordinaire, cela suffira. 
{On rappelle que ${\mr T}_{\pi}$ (resp. ${\mr U}_{\pi}$) est d\'efini via une correspondance $(q_1,q_2)$ (resp. $(p_1,p_2)$) et que pour tout point $x$ dans $M_{\gern}$, $p_2(p_1^{-1}(x))$ est le sous-ensemble de $q_2(q_1^{-1}(x))$ des couples $(\varphi_x,H)$ o\`u $H$ est un sous-groupe de $\varphi_x[\gerp]$ qui est en somme directe avec $\varphi_x[\gerp]^{\circ}$ (et donc $H$ est \'etale). Pour montrer l'\'egalit\'e d\'esir\'ee modulo $\gerp$, il suffit de montrer que $f(\varphi_x,H) \in \gerp$ pour tout $(\varphi_x,H)$ dans $q_2(q_1^{-1}(x))$ qui n'est pas dans $p_2(p_1^{-1}(x)) $.}

Soit donc $H \subset \varphi[\mf p]$, o $\varphi[\mf p]$ est la $\mf p$-torsion du module de Drinfeld associ\'e \`a $x$ {et on suppose que $H$ intersecte $\varphi_x[\gerp]^{\circ}$. On remarque que  $\varphi_x[\gerp]^{\circ}$ est le sous-groupe canonique de $\varphi[\mf p]$,} {c'est--dire l'unique relvement du noyau de Frobenius modulo $\gerp$ dans $\varphi[\mf p]$ (il suffit de v\'erifier la propri\'et\'e pour le module de Carlitz-Hayes)}. Soit $\iota$ l'isog\'enie entre $(E_x, \varphi)$ et $(E_x/H,\varphi)$, on veut donc \'etudier le morphisme $\iota(k)$ entre $q_2^*\omega^{k}$ et $q_1^*\omega^{k}$ en niveau entier.
 {On remarque que $\iota$ se factorise par
 \[
(E_x, \varphi)\stackrel{\iota'}{\rightarrow}(E_x/\varphi_x[\gerp]^{\circ}, \varphi)\rightarrow (E_x/H, \varphi)
 \]
Comme le premier morphisme est le Frobenius modulo $\gerp$, au niveau du faisceau des diff\'erentielles  on voit que ${\iota'}^* (\omega_{E_x/\varphi_x[\gerp]^{\circ}}) \subset \pi \omega_{E_x}$.} 
On voit donc que  $\iota^*q_2^* \omega$ est contenu dans $\pi q_1^* \omega$ et par consquent  $\iota^*p_2^* \omega^{ k}$ est contenu dans $\pi^k p_1^* \omega^{ k}$, {et ceci entrane que $f((E_x/H, \varphi)) \in \pi^{k}$. Donc si $k \geq r$ (on rappelle qu'on normalise en divisant par $\pi^{r-1}$) on a que modulo $\mf p$ la somme qui d\'efinit ${\mr T}_{\pi}$ porte seulement sur les sous-groupes qui n'intersectent pas $\varphi_x[\gerp]^{\circ}$  (qui sont les sous-groupes \'etales), comme pour ${\mr U}_{\pi}$}.
\end{proof}

\begin{lem}\label{lemme:divisibilitezero}
Si $k$ est assez grand ($\geq r (q^d+1)$ suffit), alors $\mr T_{\pi} f (x) \equiv 0 \bmod \mf p$ pour tout $f \in \mc M_k$ et $x \in \overline{M}_{\gern,\gerp} \setminus \overline{M}^{\mr{ord}}_{\gern,\gerp}$.
\end{lem}
\begin{proof}
Il suffira de montrer la nullit\'e pour $x$ un point de ${M_{\gern,\gerp}}_{w}(\overline{\kappa}_{\gerp})$, pour $w$ la plus grande strate non-ordinaire.  Par le m\^eme raisonnement que dans la preuve du lemme pr\'ec\'edent, on voit que $\iota^*p_2^* \omega^{k}$ est contenu dans $\pi^{k\epsilon} p_1^* \omega^{k}$ pour un certain $1 >\epsilon >0$ le degr\'e du groupe de $\gerp$-Barsotti--Tate avec $A/\mf p$-action de dimension $1$ et hauteur $2$.   \footnote{En fait, on peut prendre $\epsilon =1/(q^d+1)$.}
\end{proof}

\begin{lem}\label{lemme:commute}
Pour tout $f \in \mc M_k$ on a 
$$ \mr U_{\pi} (f \cdot \mr{Ha}) \equiv \mr{Ha} \cdot \mr U_{\pi} (f) \bmod \mf p. $$
\end{lem}
\begin{proof}
Il suffit de remarquer que $\mr{Ha}(E,\varphi)=\mr{Ha}(E/H,\varphi)$ pour $(E,\varphi)$ ordinaire et $H$ \'etale.
\end{proof}

\subsection{Th\'eor\`eme d'ind\'ependance du poids et thorme de contr\^ole vertical}\label{sec:Hidatheory}

On a maintenant tous les outils pour se mettre  pied d'oeuvre et construire la th\'eorie de Hida pour les formes modulaires de Drinfeld. On commence par d\'efinir les formes ordinaires :
\begin{dfn}
Une forme modulaire de Drinfeld $\mf p$-adique est ordinaire si elle est propre pour $\mr U_{\pi}$ et la valeur propre correspondante est une unit\'e $\mf p$-adique. 
Une forme modulaire de Drinfeld classique est ordinaire si son image par \eqref{eq:classicaltopadic} est ordinaire.
\end{dfn} 
On remarque que si $\mr T_{\pi} f = a_{\pi} f$ avec $f$ de poids sup\'erieur ou \'egal \`a $r$ et $a_{\pi}$ une unit\'e, alors $f$ est ordinaire par le lemme \ref{lemme:TpUp}.

\begin{lem}\label{lemme:projecteur}
Il existe un projecteur ordinaire $e := \varinjlim \mr U^{n!}_{\pi}$ sur les espaces $\mc V$ et $V$.
\end{lem}
\begin{proof}
Soit $f$ une forme classique dans $\mc M_k$. Par \cite[Lemma 7.2.1]{H} la limite $\mr U_{\pi}^{n!} f$ existe. Soit $f$ un \'el\'ement de $\mc V$. Par la proposition \ref{prop:dense} il y a $f_{k_1}, \ldots, f_{k_j}$ tels que $f=\sum_{i=0}^j f_{k_i}$ et on peut donc d\'efinir la limite $\mr U_{\pi}^{n!} f$.
\end{proof}
Soient donc $\mc V^{\mr{ord}}:= e \mc V$ et $ V^{\mr{ord}}:= e V$ les sous-modules des formes modulaires ordinaires.

\begin{thm}\label{thm:classicite}
Soit $k \gg 0$ et { $\chi$ tel que $\chi(\zeta)=\zeta^k$ pour toutes les racines $q^d-1$-\`eme de l'unit\'e $\zeta$}, on a alors 
$$ \mc M_k^{\mr{ord}} = V[\chi -k]^{\ord}.$$
\end{thm}
\begin{proof}
La limite sur $m$ des morphismes \eqref{eq:classicaltopadic} est injective. Pour la surjectivit\'e, soit $f$ dans $V[\chi  -k]^{\ord}$. Si $k\geq r$ alors par le lemme \ref{lemme:TpUp} on a $\mr{T}_{\pi} \equiv \mr U_{\pi} \bmod \mf p$ et $f$ descend \`a une section de $\omega^{k}$ sur $\overline{M}^{\mr{ord}}_{\gern}$ ; on voit par le lemme \ref{lem:formeslieuordinaire} que $f$ est une forme classique divis\'ee par l'invariant de Hasse. Mais par le lemme \ref{lem:Hassecong1}, la multiplication par l'invariant de Hasse induit un isomorphisme sur les formes ordinaires, donc $f$ est une vraie forme modulaire classique.
\end{proof}
On peut donc d\'emontrer que le nombre des formes ordinaires de poids fix\'e est born\'e ind\'ependamment du poids :

\begin{thm} \label{rangconstant}
Il existe une constante $C$ telle que 
$$ \mr{rk}_{\mc O} \mc M_k^{\mr{ord}} < C $$
pour tout $k \geq 0$.
\end{thm} 
\begin{proof}
Le lemme \ref{lemme:commute} nous dit que la multiplication par l'invariant de Hasse nous donne une injection
$$ {\mr H^0(\overline{M}_{\gern,\gerp}, \omega^{k})}^{\mr{ord}}  \longrightarrow {\mr H^0(\overline{M}_{\gern,\gerp}, \omega^{k+q^d-1})}^{\mr{ord}}. $$
 On d\'emontre que cette fl\`eche est surjective. 
Soit $f$ une forme ordinaire dans $\mr H^0(\overline{M}_{\gern,\gerp}, \omega^{k+q^d-1})$. Par le lemme \ref{lem:formeslieuordinaire} on a que $f$ est divisible par $\mr{Ha}$ si $k$ est assez grand. On a donc pour $k$ assez grand 
\begin{align}\label{eq: changement de poids}
{\mr H^0(\overline{M}_{\gern,\gerp}, \omega^{k})}^{\mr{ord}}  \cong {\mr H^0(\overline{M}_{\gern,\gerp}, \omega^{k+q^d-1})}^{\mr{ord}}. 
\end{align} 
 Par le lemme \ref{lemme:reductionmodp} on peut relever cet isomorphisme sur $A_{\mf p}$. 
\end{proof}
\begin{thm}
Le module $\mc V^{\mr{ord},*}=\mr{Hom}_{A_{\mf p}}(\mc V^{\mr{ord}}, F_{\mf p}/A_{\mf p})$ est de type fini sur $\Lambda$, et libre sur $\Lambda_{\infty}$.
\end{thm}
\begin{proof}
Fixons un caract\`ere $\chi$ de ${(A/\mf p)}^{\times}$ ; par abus de notation, on utilise le m\^eme symbole pour son rel\`evement de Teichm\"uller. Soit $d(\chi)$ le rang sur $\mc O$ de  $\mc V^{\mr{ord},*} \otimes_{\chi} \mc O$. Comme $\mc V^{\mr{ord}}$ est discret et limite de modules finis, son dual de Pontryagin est compact et profini.  Par le lemme de Nakayama topologique \cite[Corollary]{BalHow}, on a une fl\`eche surjective 
$$ B: \bigoplus_{\chi} {\Lambda_{\infty}^{\oplus d(\chi)}} \twoheadrightarrow  \mc V^{\mr{ord},*}.$$ 
Le th\'eor\`eme \ref{rangconstant} nous dit que $B \otimes_{\Lambda_{\infty},k} \calO$ est un isomorphisme (o\`u $k : \Lambda_{\infty} \rightarrow \mc O$ est le morphisme induit par le caract\`ere de $1+\pi A_{\mf p}$ qui envoie $z$ dans $z^k$). Comme ces caract\`eres sont denses pour la topologie de Zariski, on en d\'eduit que $B$ est un isomorphisme. Le restant du th\'eor\`eme suit. 
\end{proof}

Soit donc $\mc M := \mr{Hom}_{\Lambda}(\mc V^{\mr{ord},*},\Lambda)$ le module des familles des formes modulaires de Drinfeld ordinaires. Le th\'eor\`eme suivant rsume tout ce qu'on a su prouver jusqu' maintenant :

\begin{thm}\label{thm:Hidamain}
On a construit :
\begin{itemize}
\item un espace de formes modulaires $\mf p$-adiques $\mc V$ muni d'un projecteur ordinaire ; 
\item un module de familles de Hida de formes modulaires de Drinfeld $\mc M$ de type fini sur $\Lambda$ et libre sur $\Lambda_{\infty}$ tel que, si $k$ est assez grand (i.e., $k \geq r(q^d+1)$) {et $\chi$ est tel que $\chi(\zeta)=\zeta^k$ pour toutes les racines $q^d-1$-\`eme de l'unit\'e $\zeta$}, on a l'isomorphisme :
$$ \mc M \otimes_{\Lambda,\chi^{-1} k} \calO \overset{\cong}{\longrightarrow} \mc M_k^{\ord}$$
qui est \'equivariant pour l'action de l'alg\`ebre de Hecke engendr\'ee par les $\mr T_g$ et $\mr U_{\pi}$.
\end{itemize}
\end{thm}
En particulier, on a une $\Lambda$-alg\`ebre $\mb T$ engendr\'ee par l'image des op\'erateurs de Hecke $\mr T^{\circ}_g$ dans $\mr{End}_{\Lambda}(\mc M)$, pour $g$ de determinant premier \`a $\mf n \gerp$. Cette alg\`ebre param\'etrise les syst\`emes de valeurs propres des formes modulaires ordinaires.
\begin{prop}
L'alg\`ebre de Hecke $ \mb T$ est une algbre commutative et finie sur $\Lambda_{\infty}$.
\end{prop}
\begin{proof}
La commutativit\'e de $ \mb T$ peut \^etre dmontr\'ee comme pour $\mr{GL}_n$ sur un corps de nombres par l'astuce de Gelfand, voir par exemple \cite[Theorem 3.10.10]{Goldfeld} ; {on rappelle rapidement sa strat\'egie pour faciliter la lecture. On choisit $g_1$ et $g_2$ de dterminant premier \`a $\mf n \gerp$ et on suppose qu'on a deux places $\gerq_1$ et $\gerq_2$ telles que $\mr{det}(g_1g_2)$ a support dans $\gerq_1\gerq_2$. On consid\`ere l'alg\`ebre des fonctions sur le groupe produit $\mr{GL_r}(F_{\gerq_1})\mr{GL_r}(F_{\gerq_2})$ qui sont localement constantes, \`a support compact et bi-invariant par $\mr{GL_r}(A_{\gerq_1})\mr{GL_r}(A_{\gerq_2})$, avec la convolution comme produit. Cette alg\`ebre est donc engendr\'ee par les fonctions caract\'eristiques  de  
$$\mr{GL_r}(A_{\gerq_1})\mr{GL_r}(A_{\gerq_2})g \mr{GL_r}(A_{\gerq_1})\mr{GL_r}(A_{\gerq_2}).$$
 Par la d\'ecomposition de Cartan, on peut supposer que $g$ est la matrice diagonale de la forme $[\pi^{a_1}_1 \pi_2^{b_1},\ldots, \pi_1^{a_r}\pi_2^{b_r}]$, avec $\pi_i$ un g\'en\'erateur de $\gerq_i$. L'inverse transpose d\'efinit une anti-involution de cette alg\`ebre qui est l'identit\'e sur ces g\'en\'erateurs ; cette alg\`ebre  est donc commutative. Ceci nous dit que
\begin{align*}
\mr{GL_r}(A_{\gerq_1})\mr{GL_r}(A_{\gerq_2})g_1 \mr{GL_r}(A_{\gerq_1})\mr{GL_r}(A_{\gerq_2})\mr{GL_r}(A_{\gerq_1})\mr{GL_r}(A_{\gerq_2})g_2 \mr{GL_r}(A_{\gerq_1})\mr{GL_r}(A_{\gerq_2})& =\\
=  \mr{GL_r}(A_{\gerq_1})\mr{GL_r}(A_{\gerq_2})g_2 \mr{GL_r}(A_{\gerq_1})\mr{GL_r}(A_{\gerq_2})\mr{GL_r}(A_{\gerq_1})\mr{GL_r}(A_{\gerq_2})g_1 \mr{GL_r}(A_{\gerq_1})\mr{GL_r}(A_{\gerq_2}) &= \\ 
= \bigcup_w \mr{GL_r}(A_{\gerq_1})\mr{GL_r}(A_{\gerq_2}) w \mr{GL_r}(A_{\gerq_1})\mr{GL_r}(A_{\gerq_2}).&
\end{align*} 
 Ici, les m\^eme calculs que dans {\it loc. cit.} appliqu\'es \`a la formule pour la composition des op\'erateurs de Hecke \cite[Proposition 6.10]{Pink2013} montrent que les $w$ sont les matrices qui interviennent (avec multiplicit\'e) dans l'expression de $\mr T^{\circ}_{g_1} \mr  T^{\circ}_{g_2}$ ou $\mr T^{\circ}_{g_2} \mr T^{\circ}_{g_1}$ comme somme de tirs-en-arri\`ere par $J_w^*$ et ceci implique la commutativit\'e de $\mr T^{\circ}_{g_1}$ et $\mr T^{\circ}_{g_2}$.}

On remarque que le morphisme \eqref{eq: changement de poids} est Hecke quivariant, envoyant $\mr T^{\circ}_g$ de poids $k$ dans $\mr T^{\circ}_g$ de poids $k+q^d-1$. Comme l'alg\`ebre de Hecke en poids fix\'e $k$ est de type fini, 
disons engendr\'e par $\mr T^{\circ}_{g_1}\ldots, \mr T^{\circ}_{g_s}$, il s'ensuit que l'alg\`ebre de Hecke en poids fix\'e $k+q^d-1$ est engendr\'e par $\mr T^{\circ}_{g_1}\ldots, \mr T^{\circ}_{g_s}$.  Par recollement via les poids, on obtient que l'alg\`ebre de Hecke de $ \mc V^{\mr{ord},*}[\chi]$ tant par dfinition la limite inverse des alg\`ebres de Hecke en poids $k+(q^d-1)i$, est aussi de type fini car elle est engendr\'ee par les images de $\mr T^{\circ}_{g_1}\ldots, \mr T^{\circ}_{g_s}$.  
Par le thorme de Cayley--Hamilton, chaque $\mr T^{\circ}_g$ dans $\mb T$ satisfait un polynme unitaire  coefficients dans $\Lambda_{\infty}$, ce qui nous permet de conclure. 
\end{proof}

\section{Familles de pente finie}

Dans la section prsente, nous g\'en\'eralisons la construction des familles de pente finie pour $\mr U_{\pi}$, c'est-\`a-dire que  les formes modulaires de Drinfeld ici sont propres pour l'action de $\mr U_\pi$ de valeur propre qui n'est ni z\'ero (pente infinie) ni une unit\'e $\pi$-adique (pente zro). Exactement comme pour les formes modulaires elliptiques, il faut se restreindre \`a un ensemble plus petit de formes modulaires $\pi$-adiques qui consiste en les formes qui peuvent \^etre prolong\'ees sur un voisinage strict du lieu ordinaire. Comme la tour d'Igusa que nous avons construite au-dessus du lieu ordinaire n'est pas surconvergente, il est ncessaire d'utiliser un objet alternatif.  Nous montrons, suivant la construction de Katz du sous-groupe canonique pour les courbes elliptiques, que si un module de Drinfeld n'est pas trop loin d'\^etre ordinaire sa $\gerp$-torsion admet un sous-groupe canonique, c'est-\`a-dire un unique sous-groupe de la $\gerp$-torsion qui rel\`eve le noyau du Frobenius modulo $\gerp$. \`A l'aide de la fl\`eche de Hodge--Tate--Taguchi, nous construisons un sous-faisceau du module des diff\'erentielles $ \omega$ et un torseur $\mc F$ qui jouera le r\^ole de la tour d'Igusa. H\'elas, comme c'est un torseur en g\'eom\'etrie rigide, il n'est plus loisible de consid\'erer toute forme de poids quelconque dans $\mr{Spec}(\Lambda)$; nous devons nous restreindre aux poids {\em analytiques}, et ceux-ci sont paramtr\'es par $\mb Z_p$. Nous d\'efinissons donc de manire approprie l'espace des poids analytiques $\underline{\mb Z_p}$ et, \`a l'aide du torseur $\mc F$, nous dfinissons par la suite des familles de formes modulaires de Drinfeld \`a valeur dans l'espace des fonctions continues sur $\mb Z_p$. En adaptant des id\'ees de Gouv\^ea--Mazur et Buzzard, nous pouvons finalement construire des familles propres pour l'alg\`ebre de Hecke et de pente finie pour $\mr U_\pi$.

Avant de dbuter, rappellons des d\'efinitions de g\'eom\'etrie adique tires de \cite{HuberCV,HubZeit}.

\begin{dfn}
Soit $A$ un anneau topologique ;
\begin{itemize}
\item $A$ est adique s'il est complet et s'il existe un id\'eal $I$ tel que les $\set{I^n}_{n \geq 0}$ forment un syst\`eme de voisinage de $0$ ;
\item $A$ est f-adique s'il contient un sous-anneau $A^{+}$ qui est adique (pour la topologie induite) et pour lequel on peut choisir l'idal $I$ d'engendrement fini ;
\item $A$ est de Tate s'il est f-adique et si de plus il contient une unit\'e topologiquement nilpotente ;
\item $A$ est uniforme si l'ensemble des \'el\'ements \`a puissance born\'ee est born\'e dans $A$ ;
\item un couple $(A,A^+)$ avec $A$ de Tate et $A^+$ ouvert, born\'e, et int\'egralement clos dans $A$ est appel: anneau de Tate affino\"ide.
\end{itemize}
\end{dfn}

Soit $(A,A^+)$ un anneau de Tate affino\"ide. On lui associe un espace adique $\mr{Spa}(A,A^+)$ qui est un espace topologique dont les points sont des (classes d'isomorphismes de) valuations sur $(A,A^+)$. Il est muni d'un pr-faisceau $(\mc O_X, \mc O_X^+)$.
\begin{dfn}
Soit  $(A,A^+)$ un anneau de Tate affino\"ide, $X=\mr{Spa}(A,A^+)$ l'espace adique associ\'e et $U$ un sous-ensemble de $X$ :
\begin{itemize}
\item on dit que $U$ est un ouvert rationnel s'il existe $s_1,\ldots, s_n,t_1,\ldots,t_n$ dans $A^+$, tels que $t_i A^+ $ est ouvert et 
\[
U = \bigcap_i \set{\gerv \in X \vert   |t_j( \gerv)| \leq |s_i(\gerv)| \neq 0 \mbox{ pour } j=1,\ldots,n}.
\]
\item on dit que $X$ est stablement uniforme si $\mc O_X(U)=A \langle \frac{t_i}{s_i} \rangle$ est uniforme pour tout ouvert rationnel $U$.
\end{itemize}
\end{dfn}

\begin{rmk}
Par d\'efinition, tous les points de $X$ sont analytiques, voir \cite[\S 3]{HubZeit}. 
\end{rmk}

\subsection{La th\'eorie du sous-groupe canonique}

Notons $\overline{\mf M}_{\gern}$ la compltion formelle de la varit modulaire de Drinfeld compactifie $\overline{M}_{\gern}$ le long de la fibre en $\gerp$, et $\overline{M}_{\gern}^{\rm rig}$ sa fibre gnrique.

Notons $\overline{\mf M}^{\mr{ord}}_{\gern} \subset \overline{\mf M}_{\gern}$ l'ouvert formel ordinaire et $\overline{M}^{\mr{rig,ord}}_{\gern}$ sa fibre rigide.
Notons \[ \ha: \overline{M}_{\gern}^{\mr{\mr{rig}}} \arr [0,1], \] la valuation tronque de l'invariant de Hasse.

Pour tout $v \in [0,1]$, soit $\overline{M}^{\rm{rig}}_{\gern}(v) := \left\{ x \in \overline{M}_{\gern}^{\mr{\mr{rig}}}, \ha(x) \leq v \right\}$. Le lieu ordinaire $\overline{M}^{\mr{rig,ord}}_{\gern}$ est l'image inverse de $\left\{ 0 \right\}$, et les  $\left\{ \overline{M}^{\rm{rig}}_{\gern}(v) \right\}_{v > 0}$ forment un systme de voisinages stricts du lieu ordinaire dans la varit rigide $\overline{M}^{\mr{\mr{rig}}}_{\gern}$. Soit $\overline{\mf M}_{\gern}(v)$ un mod\`ele formel de $\overline{M}^{\rm{rig}}_{\gern}(v)$ obtenu en prenant un ouvert d'un \'eclatement admissible de $\overline{\mf M}_{\gern}$.

\begin{thm}\label{thm:sousgroupe}
Soit $n \geq 1$ un entier positif. 
\begin{itemize}
\item[(i)] Soit $v \in \mb Q \cap [0,1]$  tel que $ v< \frac{1}{2q^{d(n-1)}}$. Sur $\overline{\mf M}_{\gern}(v)$, la $\mf p^n$-torsion du module de Drinfeld g\'en\'eralis\'e $(\overline{\mc E},\overline{\varphi})$ a un sous-groupe canonique $C_{\overline{\mc E},n}$ de rang $1$, dimension $1$ et \'echelon $n$;
\item[(ii)]  Pour tout  ouvert formel affine $\mr{Spf}(R)$ de $\overline{\mf M}_{\gern}(v)$, la lin\'earisation du morphisme de Hodge--Tate--Taguchi 
\[
\mr{HTT}:{C_{\overline{\mc E},n}^{\vee}}(\overline{R}) \otimes  \overline{R}/\pi^n \overline{R} \rightarrow \omega_{C_{\overline{\mc E},n}}\otimes  \overline{R}/\pi^n \overline{R} 
\]
 a co-noyau tu\'e par $\pi^w$, pour tout $w \in  \mb Q_{>0}$ tel que $w \geq \frac{v}{q^d-1}$. 
\end{itemize} 
\end{thm}
\begin{proof}
\begin{itemize}
\item[(i)]  {On se ramne  l'tude du $A_{\gerp}$-module de Drinfeld g\'en\'eralis\'e formel associ\'e \`a $(\overline{\mc E},\overline{\varphi})$, comme d\'efini par \cite[\S 4]{Rosen} (il faut remarquer que la d\'efinition n'utilise pas que le rang du module est fix\'e), et donc au polygone de Newton de la s\'erie associ\'e \`a $\varphi_{\pi}(X)$ ;  {exactement les m\^emes calculs explicites que dans Katz \cite[\S 3.7-3.10]{Katz} (cf. \cite[\S 10]{Kassaei} qui traite les modules stricts) s'appliquent dans notre situation. On commence par se r\'eduire au cas o\`u la loi de groupe est telle que $\phi_{\zeta}(Z)=\zeta Z $ pour tout $\zeta \in \mb F_{q^d}^{\times}$ grce  \cite[Theorem 21.5.6]{Hazewinkel}.   \footnote{Voir \cite[Sec. 3.2]{KondoSugiyama} pour un rsultat donnant explicitement la s\'erie $g(X)$ du changement de base. Notons d'ailleurs que $g(X) \equiv -X \bmod X^2$.} On change donc la variable du $A_{\gerp}$-module de Drinfeld g\'en\'eralis\'e formel et on \'ecrit \[ \varphi_{\pi}(Z) = \pi Z  + a_d Z^{q^d} + \cdots. \]}
{On peut donc travailler localement i.e., sur $\mr{Spf}(R)$, pour $R$ une $\mc O$-alg\`ebre plate et compl\`ete pour la topologie $\pi$-adique.}  
Pour l'existence d'un sous-groupe canonique, il est ncessaire que $(q^d,v_{\pi}(a_d))$ soit un point de rupture du polygone de Newton. Une condition suffisante est que la pente de la droite entre $(1,1)$ et $(q^d,v_{\pi}(a_d))$ soit plus petite que la pente entre $(1,1)$ et $(q^{2d},v_{\pi}(a_{2d}))$, et cette deuxi\`eme pente est minimale quand $v_{\pi}(a_{2d})=0$, voir la Figure 1 ci-bas.
On a donc 
\[
\frac{1-v_{\pi}(a_d)}{1-q^d} > \frac{1}{1-q^{2d}}
\]
qui implique $v_{\pi}(a_d) < \frac{q^d}{1+q^d}$.
{Les m\^emes calculs que dans \cite[Lemma 10.2]{Kassaei} donnent donc un sous-groupe canonique dont la formation commute au changement de base. Si $\pi$ est nilpotent dans $R$, on proc\`ede comme dans \cite[\S 10]{Kassaei} : on construit $C_{\overline{\mc E},1}$ comme le tir-en-arri\`ere de ${C_{\overline{\mc E},1}}_{/\mr{Spf}(R')}$, pour $\mr{Spf}(R')$ un ouvert de $\overline{\mf M}_{\gern}(v)$ qui contient l'image de $\mr{Spf}(R)$.} 
Donc on peut \'ecrire, localement pour la topologie \'etale,  $$C_{\overline{\mc E},1} \cong \mr{Spec}\left(R[Z]/ a Z + Z^{q^d}\right).$$ 
{Pour voir qu'on obtient un sous-groupe de $\overline{\mc E}[\gerp]$ et non pas seulement du module de Drinfeld formel, on procde en deux temps comme dans \cite{Katz} et \cite{Kassaei}. 
Si $a_d$ est une unit\'e, alors on est sur le lieu ordinaire et $C_{\overline{\mc E},1}$ est le noyau de Frobenius, et donc c'est bien un sous-groupe de $\overline{\mc E}[\gerp]$. Pour le cas g\'en\'eral, on se r\'eduit \`a montrer que, si $G(x,y)$ est la s\'erie formelle de la multiplication dans $\overline{\mc E}[\gerp]$, alors 
$$G(x,y)^{q^d}-aG(x,y) =\sum_{0 \leq i,j \leq q^d-1} a_{ij}x^iy^j  + \cdots \equiv 0 \bmod (x^{q}-ax,y^{q^d}-ay).  $$
\'Ecrivons $\mr{Spf}(R')$ pour le tir-en-arri\`ere de $\mr{Spf}(R)$ au lieu ordinaire. Mais comme $R$ s'injecte dans $R'$ (comme on le voit au niveau des fonctions rigides) les $a_{ij}$ sont nuls si et seulement s'ils sont nuls dans $R'$ ; et on vient de se ramener ainsi au cas ordinaire, qu'on a dj vrifi.} On peut mme \'ecrire $a$ explicitement ; comme dans \cite[\S 3.1]{BrascaTesi} ou \cite{ColCanSub}, on voit que $a$ peut \^etre choisi comme $\pi/\widetilde{\mr{Ha}}$ ; en particulier, ceci prouve que $C_{\overline{\mc E},1}$ est fini et plat. } 
Ceci donne le rsultat attendu en chelon $1$. Les chelons suprieurs sont obtenus par itration standard, contribuant le facteur $\frac{1}{q^{d}}$ pour chaque itration supplmentaire, et comme $\frac{1}{2} < \frac{q^d}{1+q^d}$, on obtient la borne simplifie sur $v$ que nous dsirions.

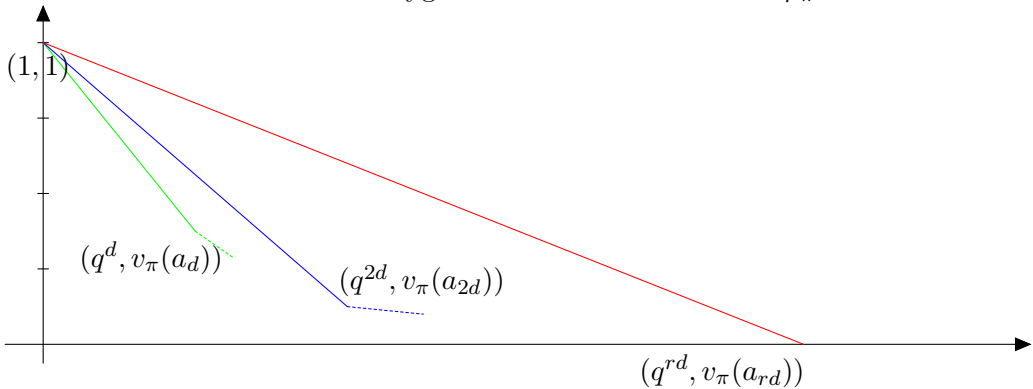
\begin{figure}[h]
\caption{Polygones de Newton associ\'es \`a  $\varphi_{\pi}$.}
\label{figHdg}
\begin{center}
\begin{tikzpicture}[line cap=round,line join=round,>=triangle 45,x=1.0cm,y=1.0cm]
\draw[->,color=black] (-0.5,0) -- (13,0);
\foreach \x in {,1,2,3,4,5,6,7,8,9,10,11,12}
\draw[shift={(\x,0)},color=black] (0pt,-2pt);
\draw[->,color=black] (0,-0.25) -- (0,4.5);
\foreach \y in {1,2,3,4}
\draw[shift={(0,\y)},color=black] (2pt,0pt) -- (-2pt,0pt);
\clip(-0.5,-1) rectangle (10,4.5);
\draw[color=green](0,4)-- (2,1.5);
\draw[color=green, dash pattern=on 1pt off 1pt](2,1.5)-- (2.5,1.15);
\draw (0.5,4) node[anchor=north east] {$(1,1)$};
\draw(2.5,1.5)node[anchor=north east] {$(q^d,v_{\pi}(a_d))$};
\draw[color=blue](0,4)-- (4, 0.5);
\draw[color=blue, dash pattern=on 1pt off 1pt](4, 0.5)--(5, 0.4);
\draw(3.75,0.5)node[anchor=south west] {$(q^{2d},v_{\pi}(a_{2d}))$};
\draw[color=red](0,4)-- (10, 0);
\draw(10.15, 0)node[anchor=north east] {$(q^{rd},v_{\pi}(a_{rd}))$};
\end{tikzpicture}
\end{center}
\end{figure}
{Comme la valuation $\pi$-adique de $a$ dans l'\'equation de $C_{\overline{\mc E},1} $ est au moins $1-v$, on voit que $ C_{\overline{\mc E},1}$ est bien donn\'e par l'\'equation $Z^{q^d}=0$ modulo $\pi^{1-v}$, et donc le sous-groupe canonique est bel et bien le rel\`evement du noyau du Frobenius (relatif  $A_\gerp/\pi$).}
\item[(ii)] On remarque que l'action de $A_{\mf p}$ sur l'alg\`ebre de Lie de $C_{\overline{\mc E},n}$ est $A_{\mf p}$-lin\'eaire. Donc $C_{\overline{\mc E},n}$ est un $A_{\mf p}$-module strict au sens de \cite{FaltStrict}. 
On commence par le cas $n=1$. On \'ecrit  $$C_{\overline{\mc E},1} \cong \mr{Spec}\left(R[Z]/ a Z + Z^{q^d}\right), $$
pour $1-v \leq  v_{\mf p}(a) \leq 1$. On voit imm\'ediatement que $\Omega_{C_{\overline{\mc E},1}[\mf p]/R}$ est un module libre de rang $1$ sur $R/a$. On fixe aussi $c$ tel que $a c^{q^d-1}=\pi$. On d\'efinit explicitement
$${C_{\overline{\mc E},n}^{\vee}} := \mr{Hom}_{A_{\mf p}}(C_{\overline{\mc E},n},\mr{CH}).$$
On voit donc qu'un point non-nul de ${C_{\overline{\mc E},1}^{\vee}}$ correspond \`a un morphisme qui envoie $Z \mapsto c Z$ avec $v_{\mf p}(c)= \frac{1 - v_{\mf p}(a)}{q^d-1}$. L'image du morphisme de Hodge--Tate--Taguchi  est donc $c R/aR$. Le conoyau est donc tu\'e par un \'el\'ement de valuation $v_{\mf p}(c)$.
Pour $n$ gnral, on a un diagramme commutatif
\begin{displaymath}
   { \xymatrix{ C^{\vee}_{\overline{\mc E},n}(\overline{R})  \ar@{->>}[d] \ar[r] & \omega_{C_{\overline{\mc E},n}} \ar@{->>}[d] \\
               {C_{\overline{\mc E},1}^{\vee}}(\overline{R})  \ar[r] & \omega_{C_{\overline{\mc E},1}} }}
\end{displaymath}
o\`u les fl\`eches verticales sont surjectives. Cela suffit pour conclure. 
\end{itemize}
\end{proof}


Muni du sous-groupe canonique, on peut dfinir l'oprateur $\U_{\pi}$ sur les espaces de formes surconvergentes comme la trace d'un Frobenius canonique, grosso modo. Nous rappelons cette construction dans notre contexte, suivant l'argument robuste de Pilloni \cite[Sect. 4.2]{Pil} aussi utilisable en dimension suprieure sur la compactification minimale de la varit modulaire de Drinfeld.

Pour $v < \frac{q^d}{q^d+1}$, on a une correspondance:
 \[ \overline{M}^{\rm rig}_{\gern}(v) \stackrel{p_1}{\longleftarrow}  \calC^{\rm rig} \stackrel{p_2}{\longrightarrow} \overline{M}^{\rm rig}_{\gern}\Big( \frac{v}{q^d} \Big). \]

L'espace rigide $\calC^{\rm rig}$ au-dessus de l'intersection de $\overline{M}^{\rm rig}_{\gern}(v)$ et de la varit modulaire de Drinfeld paramtrise les triplets $(E, \alpha,  \calC) $ o $E$ est un module de Drinfeld tel que $\ha(E) < v$, $\alpha$ une structure de niveau et $\calC$ un sous-groupe strict de rang $q^d$ de $E[\pi]$ diffrent du sous-groupe canonique. Les projections $p_1$ et $p_2$ sont induites par $p_1(E,\alpha,\calC) = (E, \alpha)$ et $p_2(E,\alpha, \calC) = (E/\calC, \Ima(\alpha))$. Soit $\pi_{\calC}: \calE \arr \calE/\calC$ l'isognie universelle. Celle-ci induit un isomorphisme \[ \pi^{*}_{\calC}: \calT_{an} \times_{\overline{M}^{\rm rig}_{\gern} ( v/q^d ), p_2} \calC^{\rm rig} \arr \calT_{an} \times_{\overline{M}^{\rm rig}_{\gern}(v), p_1} \calC^{\rm rig}.\]

Alors la mthode de Pilloni via l'application de Hodge--Tate--Taguchi que pour $n \geq 2$ et $v < \frac{q^d-1}{q^{dn}}$, l'application inverse $\pi^{-1,*}_{\calC}$ induit une application \[ \mc \pi_{n,n-1}^{-1,*}: \mc F_{n} \times_{\overline{M}^{\rm rig}_{\gern}(v), p_1} \calC^{\rm rig} \arr \mc F_n \times_{\overline{M}^{\rm rig}_{\gern}(v/q^d), p_2} \mc F_{n-1},\] o $\mc F_i$ est l'image rciproque dans $\omega_{\calE}$ de l'image sous Hodge--Tate--Taguchi du sous-groupe canonique en chelon $i$, ici aussi vue comme une dformation (surconvergente) de la tour d'Igusa, voir la dfinition \ref{PilloniFetoile} dans la section suivante.

Notons $\calP_i: \mc F_i \arr {\overline{M}^{\rm rig}_{\gern}(v)}$ les projections. Pour $\Omega_{i} := \calP_{i,*} \calO_{\mc F_i}$, on peut alors dfinir l'oprateur de Hecke not encore par $\U_{\pi}$ par lger mais habituel abus de notation comme la compose:

\[ \mr H^0( \overline{M}^{\mr{rig}}_{\gern}\Big( \frac{v}{q^d} \Big), \Omega_{n-1}) \overset{p_2^{*}}{\arr} \mr H^0( \overline{M}^{\mr{rig}}_{\gern}\Big( \frac{v}{q^d} \Big), p^{*}_2 \Omega_{n-1}) \overset{\pi^{-1, *}_{\calC}}{\arr} \mr H^0( \overline{M}^{\mr{rig}}_{\gern} ( v ), p^{*}_1 \Omega_{n})    \overset{\frac{1}{\pi^{r-1}} \Tr(p_1)}{\arr}  \mr H^0( \overline{M}^{\mr{rig}}_{\gern}(v), \Omega_n). \] 

\noindent L'extension au bord se fait comme dans le lemme \ref{lemmeoupire}, suivant \cite{Pink2013}.
\noindent De plus, l'oprateur $\mr U_{\pi}$ ainsi dfini sur $\overline{M}^{\mr{rig}}_{\gern} (v)$ est compltement continu car l'application induite $\pi_{n,n-1}^{-1,*}$ ci-haut est relativement compacte, cf. \cite[Lemma 2.5]{Lutk}. 

\subsection{Formes modulaires surconvergentes}

On tudie plus  fond le faisceau des familles de formes surconvergentes. 

\begin{dfn} \label{PilloniFetoile}
Soit $\mc F \subset \omega$ le faisceau sur $\overline{\mf M}_{\gern}(v)$ obtenu comme image inverse par \[ \omega_{(\overline{\mc E},\overline{\varphi})} \rightarrow \omega_{C_{\overline{\mc E},n}}, \] de l'image de la fl\`eche de Hodge--Tate--Taguchi. Par abus de notation, $\mc F$ dfinit aussi un faisceau sur $\overline{M}^{\mr{rig}}_{\gern}(v)$, l'chelon $n$ tant gnralement sous-entendu.
\end{dfn} 

On a la description alternative :
\begin{align}\label{eq:sectionofF}
\mc F(\mr{Spf}(R))= \set{  f \in \omega_{(\overline{\mc E},\varphi)}(\mr{Spf}(R))  \mbox{ t.q. } \exists \mf c \mbox{ g\'en\'erateur de } C^{\vee}_{\overline{\mc E},n}(\overline{R})  \mbox{ et } f \equiv \mr{HTT}(\mf c) \bmod aR }.
\end{align}

\noindent
Ceci nous permet de voir que le groupe $1 + \pi^w R$ agit par multiplication sur les sections de $\mc F$. De plus, l'action de $A/\gerp^n$ sur $C^{\vee}_{\overline{\mc E},n}$ induit une action de $A_{\gerp}$ sur $\mc F$.

\begin{thm}
Le faisceau $\mc F$ est localement libre de rang $1$. C'est un torseur pour  ${ A_{\gerp}^{\times} \Big(1 + \pi^w \mc O_{\overline{\mf M}_{\gern}(v)} \Big)} . $
\end{thm}
\begin{proof}
Sur $\overline{\mf M}_{\gern}(v)$ le faisceau $\mc F$ est trivialis\'e par $c=\mr{Ha}^{1/(q^d-1)}$. Par la description donn\'ee dans \eqref{eq:sectionofF}, deux sections $f$ et $f'$ qui trivialisent le faisceau diff\`erent  par un \'el\'ement de  $A_{\gerp}^{\times} \Big(1 + \pi^w \mc O_{\overline{\mf M}_{\gern}(v)} \Big)$.
\end{proof}

Ce faisceau nous permet de retrouver le faisceau des formes modulaires de poids $k$.  On a la proposition suivante :
\begin{prop}
Soit $\omega^{k}_{\mr{an}}$ l'analytification de $\omega^{k}$  sur $\overline{\mc M}_{\gern}(v)$. On a un isomorphisme $\omega^{k}_{\mr{an}} \cong \mc F[-k]$, o\`u $\mc F[-k]$ d\'enote les fonctions homog\`enes de poids $-k$ pour l'action de  $A_{\gerp}^{\times} \Big(1 + \pi^w \mc O_{\overline{\mf M}_{\gern}(v)} \Big)$. 
\end{prop}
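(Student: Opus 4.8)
The plan is to follow the proof of \cite[Proposition 3.6]{Pil} verbatim, transporting it through the inclusion $\mc F \hookrightarrow \omega_{(\overline{\mc E},\overline{\varphi})}$ fixed in Definition \ref{PilloniFetoile} and the preceding theorem. First I would remark that, on any affine formal open $\mr{Spf}(R) \subset \overline{\mf M}_{\gern}(v)$ over which $\mc F$ is trivialised by a generator $c$, the relation $a c^{q^d-1} = \pi$ together with $v_{\mf p}(a) \leq 1$ shows that $a$ divides $\pi$ in $R$; hence the inclusion $\mc F \hookrightarrow \omega$ becomes an isomorphism after inverting $\pi$, i.e. $\mc F_{\mr{an}} \xrightarrow{\ \sim\ } \omega_{\mr{an}}$ as invertible sheaves on the rigid fibre $\overline{M}^{\mr{rig}}_{\gern}(v)$. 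Consequently the $A_{\gerp}^{\times}\Big(1 + \pi^w \mc O_{\overline{\mf M}_{\gern}(v)}\Big)$-torsor of trivialisations of $\mc F$, call it $\mathbb T_{\mc F}$, maps naturally to the $\mr{GL}_1$-torsor $\mc T_{\mr{an}}$ of trivialisations of $\omega_{\mr{an}}$, a local generator $e$ of $\mc F$ being sent to its image $\iota(e)$ in $\omega_{\mr{an}}$; and since $\mc F$ is locally free of rank $1$, the sheaf $\mc F[-k]$ of weight-$(-k)$ functions on $\mathbb T_{\mc F}$ is an invertible $\mc O_{\overline{M}^{\mr{rig}}_{\gern}(v)}$-module.

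Second, I would check that this map of torsors is equivariant for the natural homomorphism $A_{\gerp}^{\times}\Big(1 + \pi^w \mc O_{\overline{\mf M}_{\gern}(v)}\Big) \hookrightarrow \mr{GL}_1$: the subgroup $1 + \pi^w\mc O$ acts on $\mc F$ by multiplication, hence by homotheties, while by the description \eqref{eq:sectionofF} an element $\lambda \in A_{\gerp}^{\times}$ scales a section $f \equiv \mr{HTT}(\mf c) \bmod aR$ by $\lambda$, because $\mr{HTT}$ is $A_{\gerp}$-equivariant for the action of $A/\gerp^n$ on $C^{\vee}_{\overline{\mc E},n}$. So the homomorphism above is simply the inclusion of a subgroup of homotheties, and pulling back along $\mathbb T_{\mc F} \to \mc T_{\mr{an}}$ the functions homogeneous of the appropriate weight yields an $\mc O_{\overline{M}^{\mr{rig}}_{\gern}(v)}$-linear morphism
\[
\omega^{\otimes k}_{\mr{an}} = \mc O_{\mc T_{\mr{an}}}[k] \arr \mc F[-k],
\]
the sign of the weight being governed by the same conventions as in \cite{Pil}, i.e. ultimately by the dual appearing in $C^{\vee}_{\overline{\mc E},n}$ and the Hodge--Tate--Taguchi map.

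Third, to see that this morphism is an isomorphism, I would argue locally on $\mr{Spf}(R)$ as above: there $\omega^{\otimes k}_{\mr{an}}$ is free of rank $1$ over $R[1/\pi]$ on the weight-$k$ function taking the value $1$ at $\iota(c)$, while $\mc F[-k]$ is free of rank $1$ over $R[1/\pi]$ on the weight-$(-k)$ function taking the value $1$ at $c$, and the morphism carries the one basis to the other; hence it is an isomorphism over each such open, and therefore globally on $\overline{\mf M}_{\gern}(v)$, equivalently on its rigid fibre. The main obstacle I anticipate is purely bookkeeping but genuinely delicate: making the identification $\mc F_{\mr{an}} \cong \omega_{\mr{an}}$ and the compatibility of the two torsor structures uniform across the cusps, where $(\overline{\mc E},\overline{\varphi})$ is only a generalised Drinfeld module, and tracking the weight normalisation so that the natural map indeed lands in $\mc F[-k]$ rather than $\mc F[k]$; everything else is formal once the equivariance of $\mr{HTT}$ recorded above is in place, exactly as in \cite[\S 3]{Pil}.
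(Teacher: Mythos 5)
Your proposal is correct and takes essentially the same route as the paper, which offers no argument beyond invoking \cite[Proposition 3.6]{Pil}: your reconstruction — noting that $\mc F \hookrightarrow \omega$ becomes an isomorphism after inverting $\pi$, comparing the torsor of distinguished generators of $\mc F$ equivariantly with the $\mr{GL}_1$-torsor $\mc T_{\mr{an}}$ via the $A_{\gerp}$-linearity of $\mr{HTT}$, and checking the resulting map locally on a trivialising generator $c$ — is precisely the Pilloni argument transported to the Drinfeld setting. The remaining looseness (the weight-sign convention and the behaviour at the cusps) is bookkeeping that you flag explicitly and that the paper likewise leaves implicit.
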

\begin{proof}
{La preuve est exactement comme dans \cite[Proposition 3.6]{Pil} ; on considre un faisceau $\mc T $ sur $\overline{M}_\gern$ qui param\`etre les trivialisations du faisceau $\omega$. Donc $\mc T$ est un $\mb G_m$ -torseur alg\'ebrique et $\omega^k$ sont les fonctions homog\`enes sur $\mc T$ de poids $-k$. En fibre g\'en\'erique, l'image de la fl\`eche de Hodge--Tate--Taguchi engendre $\omega$, et on a donc une fl\`eche $\mc F \rightarrow \mc T^{\mr{an}}$ qui est injective et qui envoie $\mc F[-k]$ dans $\omega^{k}_{\mr{an}}$. Pour montrer la surjectivit\'e, on travaille localement sur un ouvert $\mc U$ de $\overline{\mc M}_{\gern}(v) $ et on voit $\mc F$ comme des boules centr\'ees dans $A_\gerp^\times$ de rayon $\pi^w$ dans $\mb G_m^{\mr{an}} \times \mc U$. Une section de $\mc F$ homog\`ene de poids $k$ est, \`a une fonction sur $\mc U$ pr\`es, tout simplement la fonction $x \mapsto x^{-k}$, qui se prolonge de la fa\c{c}on \'evidente  sur $\mb G_m^{\mr{an}} $.}
\end{proof}
La premi\`ere \'etape pour construire des familles $\pi$-adiques est de pr\'eciser l'espace des poids. Historiquement, le choix naturel est $\mr{Spa}(\Lambda,\Lambda)$. Un problme immdiat de ce choix est que l'id\'eal dfinissant la topologie en question n'est pas d'engendrement fini et donc la th\'eorie de Huber ne s'applique pas.

Une autre diffrence majeure et pertinente pour la construction de familles de formes modulaires de Drinfeld de pente finie est la {\em non-analyticit\'e}  du caract\`ere universel  induit par le morphisme naturel de $1 +\pi A_{\gerp}$ dans ${\Lambda}^{\times}$. En effet, il n'y a gure de caractres localement analytiques en caractristique positive: ne subsistent que les morphismes $z \mapsto z^s$, pour $s$ dans $\mb Z_p$, cf. \cite{QuestionGoss}. Nous incorporons ce fait dans notre construction alternative recentre sur $\Z_p$.

{Soit $\underline{\mb Z_p}$ le  sch\'ema en groupes localement constant d\'efini par
\[\underline{\mb Z_p} = \mr{Spec} \Big( \mc C(\mb Z_p, \mc O) \Big). \]
Il y a morphisme $\underline{\mb Z_p} \rightarrow \mr{Spec}(\Lambda_{\infty})$ induit par le morphisme d'algbre ci-haut, voir l'quation \eqref{Mahler}. 
Par un abus de notation inoffensif, on note aussi par  $\underline{\mb Z_p}$ le sch\'ema en groupes localement constant sur $\mr{Spa}(\mc O, \mc O)_{\mr{an}}$, comme dfini dans \cite[\S 8.2]{ScholzeBerk}, cf. \cite[\S 2]{ScholzeENS}.  Il y a un espace adique associ et la m\^eme preuve que le th\'eor\`eme \ref{thm:sheafy} plus bas nous donne le thorme suivant.}
\begin{thm}\label{thm:sheafy2}
L'espace adique $\underline{\mb Z_p}$ est faisceautique et, pour tout recouvrement de \v{C}ech fini par des ouverts rationnels et tout faisceau en $(\mc O_{\underline{\mb Z_p}}, \mc O_{\underline{\mb Z_p}}^+)$-modules coh\'erents, le complexe de \v{C}ech associ\'e est acyclique.
\end{thm}

{On consid\`ere le produit $\overline{\mc M}_{\gern}(v) \times \underline{\mb Z_p} $. Localement,  $\mr{Spa}(R,R^+) \times  \underline{\mb Z_p}$ est $\mr{Spa}(R_{\infty},R^+_{\infty})$ pour 
\[
(R_{\infty},R^+_{\infty}) = \widehat{\varinjlim \prod_{\mb Z / p^n \mb Z} (R,R^+) }.
\]}

\noindent Un \'el\'ement de $R_{\infty}$ est donc une limite de fonctions localement constantes  pour la norme sup. Quand $R$ est complet, on rcupre les espaces de fonctions continues sans coup frir:
\[
(R_{\infty},R^+_{\infty}) =( \mc C (\mb Z_p, R), \mc C (\mb Z_p, R^+)).
\]

\begin{dfn}
{Soit $\chi$ un caract\`ere d'ordre fini de $A_{\gerp}^{\times}$ et  $s$ dans $\mb Z_p$ ; on \'etend ce caract\`ere  \`a $1 + \pi^w \mc O_{\overline{\mf M}_{\gern}(v)}$ de la fa\c{c}on naturelle via $x \mapsto x^s$. Le faisceau des familles de formes modulaires $\omega^{(\chi,s^{\mr{univ}})}$ sur $\overline{\mc M}_{\gern}(v) \times \underline{\mb Z_p} $ est d\'efini comme le sous-faisceau de $\mc F$ des sections $(\chi,s)$-homog\`enes pour l'action de $A^{\times}_{\gerp}$.}

{Plus pr\'ecis\'ement, localement sur $\mr{Spa}(R,R^+) \times  \underline{\mb Z_p}$ on $y.(r_s)_s=(y^s r_s)_s$ pour $y\in  A_{\gerp}^{\times} \Big(1 + \pi^w \mc O_{\overline{\mf M}_{\gern}(v)} \Big)$ et $(r_s)_s \in R_{\infty}$, et ceci est bien d\'efini. \footnote{on approxime la fonction $s \mapsto y^s$ avec des fonctions localement constantes $f_n$ et  $(y^s r_s)_s$ est donc la limite de $(f_n(y) r^{(n)}_s)_s$ (pour $\set{(r^{(n)}_s)_s}_n$ une suite qui tend vers $(r_s)_s$).} }

{On d\'efinit aussi  par la suite $\mc M_{s^{\mr{univ} }}=\mc M_{s,v}$, le module des formes modulaires de poids $s$ et rayon de surconvergence $v$, comme tant: $\mr H^0(\overline{\mc M}_{\gern}(v) \times \underline{\mb Z_p}, \omega^{(\chi,s^{\mr{univ}})})$. }
\end{dfn}

On rappelle qu'un module de Banach $M$ sur $A$ est dit  orthonormalisable (ou Pr) si l'on a un isomorphisme (pas forcement une isom\'etrie !) de modules de Banach $M \cong \widehat{\oplus}_{i \in I} A$. On  a le th\'eor\`eme suivant :
\begin{thm}
{Le module $\mc M_{s^{\mr{univ}}}$ est  orthonormalisable sur $\mc C (\mb Z_p, \mc O[1/\pi]) $.}
\end{thm}
\begin{proof}
{Soit $\set{U_i}_{i=1}^m$ un recouvrement fini de $\overline{\mc M}_{\gern}(v)$ par des ouverts rationnels tels que $ \omega^{(\chi,s^{\mr{univ}})}$ se trivialise sur $U_i \times \underline{\mb Z_p}$. On a donc la suite longue 
\[
0 \rightarrow \mc M_{s^{\mr{univ}}} \rightarrow \oplus_i \mr H^0(U_i \times \underline{\mb Z_p},\omega^{(\chi,s^{\mr{univ}})})\rightarrow \oplus_{i,j} \mr H^0(U_{i,j} \times \underline{\mb Z_p},\omega^{(\chi,s^{\mr{univ}})}) \rightarrow \ldots
\]
qui est exacte par le th\'eor\`eme \ref{thm:sheafy2}. Pour tout sous-ensemble $\set{i_1,\ldots,i_j}$ dans $\set{1,\dots,m}$ on a 
$$\mr H^0(U_{i_1,\dots,i_j} \times \underline{\mb Z_p},\omega^{(\chi,s^{\mr{univ}})}) \cong \mc O_{\mc W} (U_{i_1,\dots,i_j}) \widehat{\otimes}_{\mc O[1/\pi]}\mc C (\mb Z_p, \mc O[1/\pi])$$ qui est  orthonormalisable  sur $\mc C (\mb Z_p, \mc O[1/\pi])$ (car tout module de Banach sur $\mc O[1/\pi]$ est  orthonormalisable). On peut donc utiliser e.g., \cite[Corollaire B.2]{AIPHalo}. }
\end{proof}

Pour construire une vari\'et\'e de Hecke, nous avons besoin d'une bonne notion de dterminant de Fredholm $F_{\mr U_{\pi}}(X)$ de $\mr U_{\pi}$. En particulier, c'est un \'el\'ement de $\mc C (\mb Z_p, \mc O[1/\pi])\left\{ \left\{ X \right\}\right\}$,  l'anneau des s\'eries $\sum_{n=0}^{\infty} a_n X^n$ qui convergent pour tout \'el\'ement de $\mc C (\mb Z_p, \mc O[1/\pi])$. 
\begin{thm}\label{thm:Fredholm}
Il existe un \'el\'ement \[ F_{\mr U_{\pi}}(X) \in \mc C (\mb Z_p, \mc O[1/\pi])\left\{ \left\{ X \right\}\right\}\] qu'on appelle d\'eterminant de Fredholm et qui interpole les d\'eterminants de Fredholm de $\mr U_{\pi}$ en poids fix\'es.
\end{thm}
\begin{proof}
Mme si l'op\'erateur $\mr U_{\pi}$ est compltement continu sur $\mc M_{s^{\mr{univ}}}$, on ne peut pas d\'efinir directement son dterminant de Fredholm par la dfinition standard cf. \cite{Buz}, car l'anneau $C (\mb Z_p, \mc O[1/\pi])$ n'est pas noeth\'erien. 
Nonobstant, on dispose ponctuellement du dterminant de Fredholm c'est--dire : \[ F_{\mr U_{\pi,s}}(X) = \sum_{n \geq 0} a_n(s)X^n \] est bien dfini pour tout $s \in \mb Z_p$, ce qui nous permet de d\'efinir formellement
\[
F_{\mr U_{\pi}}(X) := \sum_{n \geq 0} a_n(s)X^n . 
\]
Il nous faut montrer que les fonctions $a_n(s)$ sont continues. On utilise la strat\'egie de \cite{GM}. 
Soient $s$ et $s'$ deux poids dans $\mb Z_p$, $s \equiv s' \bmod p^m$, et soit $\mc H = \widetilde{\mr{Ha}}^{(s'-s)/(q^d-1)} := \varinjlim_{k_j} \widetilde{\mr{Ha}}^{k_j/(q^d-1)}$ o\`u $k_j$ est une suite d'entiers positifs divisibles par $q^d-1$ et qui tend vers $s'-s$ dans $\mb Z_p$.  \footnote{Par le th\'eor\`eme des restes chinois, la suite existe.} 
Utilisant le lemme \ref{lem:Hassecong1} on voit aussi que $\mc H \equiv 1 \bmod \pi^{p^m}$.\\
On consid\`ere les deux morphismes :
\[  \Phi, \Psi :  \mr H^0(\overline{\mc M}_{\gern}(v), \omega^{s}) \rightrightarrows \mr H^0(\overline{\mc M}_{\gern}(v), \omega^{s})   \] 
o\`u $\Phi:= \mr U_{\pi,s}$ et $\Psi:= \mc H^{-1}\mr U_{\pi,s'} \mc H$. On veut trouver un r\'eseau  $D \subset  \mr H^0(\overline{\mc M}_{\gern}(v), \omega^{s})$ tel que $ (\Phi -\Psi)(D) \subset \pi^{p^m} D$ ; si on trouve un tel $D$, alors  par \cite[Lemma 2]{GM} les coefficients des s\'eries caract\'eristiques de $\Phi$ et $\Psi$ (et donc de $\mr U_{\pi,s'}$) seront congruentes modulo $\pi^{p^m}$.  Nous choisissons comme candidat  le mme $D$ que dans \cite{GM}, c'est--dire le sous-module de $\mr H^0(\overline{\mc M}_{\gern}(v), \omega^{s})$ des formes $f$ qui ont norme born\'ee par $1$ sur le lieu ordinaire. 
Le module $D$ est un r\'eseau car toute forme $f$ a norme finie sur le lieu ordinaire, et il est stable sous $\mr U_{\pi,s}$ (par dfinition, voir section \ref{sec:Hidatheory}). Par \cite[Lemma 3]{GM} on voit donc que $(\Phi -\Psi) (D) \subset \pi^{p^m} D$.
\end{proof}

En particulier, $F_{\mr U_{\pi}}(0)=1$ et toutes les racines rciproques de $F_{\mr U_{\pi}(X)}$ (et donc les valeurs propres de $\mr U_{\pi}$) sont enti\`eres. 
Soit $\mb A^1_{\underline{\mb Z_p}}$ la droite affine sur $\underline{\mb Z_p}$. Elle est recouverte par les boules affino\"ides 

$$ \mb B_{a/b} := \mr{Spa}(\mc C (\mb Z_p, \mc O[1/\pi])\langle \pi^a X^b \rangle, \mc C (\mb Z_p, \mc O)\langle \pi^a X^b \rangle),$$ 

\noindent
pour $\frac{a}{b} \in \mb Q_{\geq 0}$. 
\begin{dfn}
 La vari\'et\'e spectrale $\mc Z=\mc Z_v$ est le ferm\'e $ V(F_{\mr U_{\pi}}(X)) \subset \mb A^1_{\underline{\mb Z_p}}$. 
\end{dfn}
On utilise la notion de localement de type fini et quasi-s\'epar\'e de \cite[\S 3]{HubZeit}. 
\begin{dfn}
Suivant \cite[Definition 1.5.1]{HuberBook}, on dit qu'un morphisme $w:X \rightarrow Y$ entre espaces adiques est quasi-fini si $w$ est localement de type fini et pour tout $y \in Y$ la fibre $f^{-1}(y)$  possde la topologie discr\`ete. 

Suivant \cite[Definition 1.3.3]{HuberBook}, on dit que $w$ est partiellement propre si $w$ est localement de type fini, s\'epar\'e, et satisfait le crit\`ere valuatif universel suivant : 
 \begin{center} pour tout morphisme $Y'\rightarrow Y$, et tout $x' \in X \times_Y Y'$, si on d\'enote par $w'$ la projection $X \times_Y Y' \rightarrow Y'$, on a que chaque point $\tilde{y}$ qui se sp\'ecialise en $w'(x')$ est image par $w'$ d'un point $\tilde{x}$ de  $X \times_Y Y'$ qui se sp\'ecialise en $x'$.
 \end{center}
\end{dfn}
\begin{rmk}
Il faut remarquer que la notion de espace f-adique du livre \cite{HuberBook} est diff\'erente de la n\^otre et de celle de \cite{HuberCV,HubZeit}, car dans son ouvrage de rfrence, Huber d\'efinit un anneau f-adique  comme un anneau topologique avec un anneau de d\'efinition qui est noethrien. D\`es qu'on utilisera un r\'esultat du livre cit, on expliquera pourquoi un tel r\'esultat reste valable dans notre contexte.
\end{rmk}
\begin{thm}\label{varietespectrale}
Le morphisme $\mc Z \rightarrow \underline{\mb Z_p}$ est plat, localement quasi-fini, et partiellement propre. De plus, pour tout $x$ dans $\mc Z$ il existe un ouvert $U \subset \mc Z$ tel que $\overline{x} \subset U$ et un ouvert $V \subset \underline{\mb Z_p}$ tel que $w(U) \subset V$ et $w: U \rightarrow V$ est fini et plat. Si le degr\'e de $w: U \rightarrow V$ est constant sur $U$, alors  les ouverts $U$ et $V$ d\'efinissent une factorisation $F_{\mr U_{\pi}}(X) = Q(X)P(X) \in \mc O_{\underline{\mb Z_p}}(V)$, avec $Q(X)$ et $P(X)$ premiers entre eux et tel que $Q(X)$ est un polyn\^ome de terme constant $1$ et coefficient directeur une unit\'e de $\mc O_{\underline{\mb Z_p}}(V)$.
\end{thm}
\begin{proof}
On adapte la d\'emonstration de \cite[Th\'eor\`eme A.2]{AIPHalo}. Le morphisme $w$ est par d\'efinition  localement de type fini et quasi-s\'epar\'e.  

Le m\^eme raisonnement que dans \cite[Lemme B.1]{AIPHalo} sur les pentes du polygone de Newton nous garantit que  pour tout $x=\mr{Spa}(\mc O[1/\pi], \mc O)$ dans $\underline{\mb Z_p}$, la s\'erie de Fredholm $F_{\mr U_{\pi}}(X)_x \in \mc O[1/\pi]\left\{ \left\{ X \right\}\right\} $ n'a qu'un nombre fini de valeurs propres de $\mr U_{\pi}$ de valuation $\pi$-adique plus petite que $a/b$ pour tout $a/b \in \mb Q_{\geq 0}$. Soit $\mc Z_{a/b}$ la restriction de $\mc Z$ \`a $\mb B_{a/b}$. 
Notons $F_{\mr U_{\pi}}(\pi^a X^b) = \sum_{n=0}^{\infty} a_n (\pi^a X^b)$ ;  on recouvre $\mb Z_p$ par un nombre fini d'ouverts. Consid\'erons
\[ V_{n} := \set{x \in \mb Z_p \vert F_{\mr U_{\pi}}(\pi^a X^b)_x \mbox{ est } n\mbox{-distingu\'e}}. \] 

L'ensemble $V_n$ est ouvert car la condition indique est quivalente \`a ce que $\vert a_n \vert_{\pi} \geq \vert a_m \vert_{\pi}$ pour tout $m$, avec in\'egalit\'e stricte si $n>m$. 
Soit donc, pour $M \in \N,$
\[
\mc U_M :=\set{ x \vert F_{\mr U_{\pi}}(X)_x \mbox{ a moins que } M \mbox{ z\'eros} \mbox{ de valuation $\pi$-adique plus petite que $a/b$}  }.
\] 
Ces ensembles $\mc U_M$ forment un suite croissante d'ouverts et $\underline{\mb Z}_p$ tant quasi-compact, on sait que $F_{\mr U_{\pi}}(X)$ a moins que $M$ racines pour $M$ fix\'e assez grand. \footnote{En contraste avec \cite[Lemma 4.4]{Buz}, la situation se simplifie: les $V_n$ fournissent dj un recouvrement par des ouverts deux--deux disjoints.}
Le th\'eor\`eme de pr\'eparation de Weierstra\ss{} \cite[Theorem 1.3]{ElliotWeier} nous dit que sur $V_{n}$, on peut factoriser, \`a une puissance de $\pi$ pr\`es, $F_{\mr U_{\pi}}(T) $ comme suit:
\[ F_{\mr U_{\pi}}(T)  = Q(T)P(T) \] pour $T=\pi^a X^b$, o $Q$ est un polyn\^ome de degr\'e $n$ congruent \`a $T^{n}$ modulo $\pi$, et $P$ une s\'erie formelle inversible.
Si $\mc Z_{a/b,n}$ est la restriction de $\mc Z_{a/b}$ \`a $V_n$ on a donc 
\[ \mc {O}_{\underline{\mb Z_p}}(V_n)[T]/ Q(T) \cong \mc O_{\mc Z}(\mc Z_{a/b,n}) \]
qui montre que $\mc Z$ est plat  (utilisant \cite[Lemma 10.127.5]{StacksProject} qui traite le cas d'un anneau non-noeth\'erien) et localement fini.

Pour montrer que $w$ est partiellement propre, on peut travailler localement : soit donc $Y'=\mr{Spa}(A,A^+) \rightarrow \mb B_{a/b}  $ un morphisme,  $x'=\mr{Spa}(\kappa(x'),\kappa^+(x'))$ un point de $\mc Z \times Y'$, $ w'(x)=\mr{Spa}(\kappa(y),\kappa^+(y))$, et $\tilde{y}=\mr{Spa}(\kappa(y),\kappa^+(\tilde{y}))$ un  point de $Y' $ qui se spcialise \`a $w'(x)$. On  d\'efinit $\tilde{x}=\mr{Spa}(\kappa(x'),\kappa^+(\tilde{x}))$ pour $\kappa^+(\tilde{x})$ la cl\^oture int\'egrale de l'image de $\kappa^+(\tilde{y})$ dans $\kappa^+(x')$. Pour r\'esumer, on a le diagramme suivant :

$$
    \xymatrix{ (A,A^+) \ar[d] \ar[r] & (A\left\{ \left\{ X \right\}\right\}/F_{\mr U_{\pi}}(X),A^+\left\{ \left\{ X \right\}\right\}/F_{\mr U_{\pi}}(X))\ar[d]\\ 
 (\kappa(y),\kappa^+(y)) \ar[r] & (\kappa(x'),\kappa^+(x'))  \\
               (\kappa(y),\kappa^+(\tilde{y})) \ar[u]\ar[r] & (\kappa(x'),\kappa^+(\tilde{x})) \ar[u] }
$$
Nous pouvons donc poser $U:= \calZ_{a/b,n}$ pour $a/b$ convenable.

\end{proof}

Soit $\mc H$ l'alg\`ebre de Hecke abstraite sur $\mc C (\mb Z_p, \mc O[1/\pi])$ engendr\'ee par les op\'erateurs de Hecke de niveau premier  $\mf n \gerp$ et l'oprateur $\mr U_{\pi}$, et $\mc H^+$ la m\^eme alg\`ebre de Hecke abstraite mais sur $\Lambda^+$. 
\begin{lem}
Les alg\`ebres de Hecke $\mc H$ et $\mc H^+$ sont commutatives. 
\end{lem}

Soient $U$ et $V$ deux ouverts comme dans le th\'eor\`eme \ref{varietespectrale}. Suivant Serre \cite{SerreCC}, on utilise le dterminant de Fredholm pour obtenir une dcomposition de Riesz comme suit :

\begin{prop}
Il existe une d\'ecomposition : 
$$ \mc M_{s^{\mr{univ}}}  \otimes \mc O_{\underline{\mb Z_p}}(V) = \mc M_{s^{\mr{univ}}} (Q) \oplus  \mc M_{s^{\mr{univ}}} (P), $$ o  $\mr U_{\pi}^d Q(\mr U^{-1}_{\pi})$ est nilpotent sur $ \mc M_{s^{\mr{univ}}} (Q)$ et inversible sur  $ \mc M_{s^{\mr{univ}}} (P)$. 

\noindent
De plus, on a aussi une d\'ecomposition :
$$\mr H^0(\overline{\mc M}_{\gern}(v) \times \underline{\mb Z_p}, \omega^{(\chi,s^{\mr{univ}} ),+})\otimes_{\mc C (\mb Z_p, \mc O)} \mc O^+_{\underline{\mb Z_p}}(V)  = \mc M_{s^{\mr{univ}}} (Q)^+ \oplus  \mc M_{s^{\mr{univ}}} (P)^+ .$$

 \end{prop}

\begin{proof}
Il nous faut montrer que $\mc M_{s^{\mr{univ}}} (Q) := \mr{Ker} \left(\mr U_{\pi}^d Q(\mr U^{-1}_{\pi})^m \right)$, $m>>0$, est en somme directe. Comme dans \cite[Proposition 12]{SerreCC},  pour chaque racine $a$ de $Q(\mr U^{-1}_{\pi})$, on produit  \`a partir de $1-a\mr U_{\pi}$ une dcomposition $1= p+q$, o\`u $p$ et $q$ sont des endomorphismes de $\mc M_{s^{\mr{univ}}}  \otimes \mc O_{\underline{\mb Z_p}}(V)$ dans la cl\^oture de $\mc C (\mb Z_p, \mc O[1/\pi])[U_{\pi}]$.
On obtient la dcomposition de Riesz dsire en procdant comme dans \cite[Remarque 3]{SerreCC}.
\end{proof}

\begin{dfn}
La vari\'et\'e de Hecke $\mc C_v \rightarrow \mc Z_v$ est l'espace adique d\'efini localement sur $U$ par 
$\mr{Spa}(\mc H_Q, \mc H^+_Q)$, o\`u $ \mc H_Q $ est l'image de $\mc H$ (resp. $\mc H^+ $) dans $\mr{End}_{\mc C (\mb Z_p, \mc O[1/\pi])}(\mc M_{s^{\mr{univ}}} (Q))$ (resp. $\mr{End}_{\mc C (\mb Z_p, \mc O)}(\mc M_{s^{\mr{univ}}} (Q)^+ )$).
\end{dfn}

Ce qu'il nous reste \`a faire est de recoller les vari\'et\'es $\mc Z_v$ et $\mc C_v$ pour $v$ tendant vers $0$.  Si $v ' > v >0$, on peut voir que la fl\`eche induite par la restriction
$$ \mc M_{s,v'} \rightarrow \mc M_{s,v}  $$ est un ``lien''  comme dans \cite[Theorem B3.2]{Col}, donc les deux s\'eries de Fredholm sont les m\^emes pour tout $s \in \Z_p$, et les deux vari\'et\'es spectrales $\mc Z_{v'}$ et $\mc Z_{v}$ sont isomorphes et pareillement pour $\mc C_{v'}$ et $\mc C_v$. 

Somme toute, on a le thorme suivant : 
\begin{thm}\label{thm:eigencurve}

\begin{enumerate}
\item
Il existe une vari\'et\'e spectrale $\mc Z \rightarrow \underline{\mb Z_p}$ qui est plate,  localement quasi-finie, partiellement propre sur $\underline{\mb Z_p}$, et param\`etre les inverses des valeurs propres de $\mr U_{\pi}$ sur $\varinjlim \mc M_{s^{\mr{univ}},v} $. 

\item
Il existe une vari\'et\'e de Hecke $\mc C \rightarrow \mc Z$ qui param\`etre les syst\`emes des valeurs propres de l'alg\`ebre de Hecke sur $\varinjlim\mc M_{s^{\mr{univ}},v}$.


\end{enumerate}
\end{thm}


{Voici le corollaire attendu sur l'existence des familles variant continment:}
\begin{cor} \label{corsection}
Soit $f$ une forme modulaire de Drinfeld, propre pour $\mr U_{\pi}$, de poids $k$ et de pente finie pour $\mr U_{\pi}$. Il existe un ouvert $k \in U \subset \mb Z_p$, un anneau $\calR$ fini sur $\calC(U,\calO)$, et une fonction continue $F \in \calR$ telle que pour tout $x \in \mr{Spa} (\calR[1/\pi],\calR)$, au-dessus de $s \in U$, il y a une forme modulaire $f_x$ de Drinfeld surconvergente de poids $s$ et de valeur propre $F(x)$, et en particulier, il existe $x_k$ au-dessus de $k$ tel que $f_{x_k} = f$.
\end{cor}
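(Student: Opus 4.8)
Le plan est de d\'eduire le corollaire du th\'eor\`eme \ref{thm:eigencurve}, en suivant l'argument classique qui montre qu'une forme propre de pente finie d\'efinit un point de la vari\'et\'e de Hecke. \emph{Premi\`ere \'etape : voir $f$ comme une forme surconvergente de pente finie.} Via la restriction $\overline{M}_{\gern} \to \overline{M}^{\mr{rig}}_{\gern}(v)$, l'analytification, et l'isomorphisme $\omega^{\otimes k}_{\mr{an}} \cong \mc F[-k]$ (pour $\mc F$ d'\'echelon $n$ assez grand et $v>0$ assez petit, comme dans le th\'eor\`eme \ref{thm:sousgroupe}), la forme $f$ d\'efinit, via le sous-groupe canonique, un \'el\'ement non nul de $\mc M_{k,v}$ ; ce plongement des formes classiques de poids $k$ dans les formes surconvergentes est \'equivariant pour $\mr U_{\pi}$, les deux op\'erateurs provenant de la m\^eme correspondance restreinte \`a un voisinage strict du lieu ordinaire. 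La valeur propre $a_{\pi}$ de $f$ \'etant non nulle, le syst\`eme de valeurs propres de $f$ appara\^{i}t, quitte \`a \'etendre les scalaires, dans un facteur $\mc M_{k^{\mr{univ}}}(Q)$ de pente $\leq a/b$ pour $a/b \in \mb Q_{\geq 0}$ assez grand, o\`u $Q$ admet $a_{\pi}^{-1}$ pour racine. Par construction de $\mc Z$ et de $\mc C$ (th\'eor\`eme \ref{thm:eigencurve}), on obtient ainsi un point $x_k$ de $\mc C$ au-dessus du point de poids $k \in \underline{\mb Z_p}$, de coordonn\'ee spectrale $a_{\pi}^{-1}$ sur $\mc Z$, et dont le syst\`eme de valeurs propres de Hecke est celui de $f$.

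\emph{Deuxi\`eme \'etape : structure locale et d\'efinition de $\calR$.} On applique la structure locale du th\'eor\`eme \ref{varietespectrale} au point image de $x_k$ dans $\mc Z$ : avec $a/b$ comme ci-dessus on a $x_k \in \mc Z_{a/b}$, et le th\'eor\`eme de pr\'eparation de Weierstra\ss{} fournit un ouvert $U = V_n \ni k$ de $\underline{\mb Z_p}$ et une factorisation $F_{\mr U_{\pi}}(X) = Q(X)P(X)$, o\`u $Q$ et $P$ sont premiers entre eux, $Q$ est un polyn\^ome de terme constant $1$ et de coefficient directeur une unit\'e de $\calC(U,\calO[1/\pi])$, et $Q(x_k)=0$. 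Par le th\'eor\`eme \ref{thm:eigencurve}(2), la restriction $\mc C_U$ de $\mc C$ au-dessus de $\mc Z_{a/b,n}$ s'\'ecrit $\mr{Spa}(\calR[1/\pi],\calR)$, o\`u $\calR := \mc H^+_Q$ est l'image de l'alg\`ebre de Hecke $\mc H^+$ dans $\mr{End}_{\calC(U,\calO)}(\mc M_{s^{\mr{univ}}}(Q)^+)$ et $\calR[1/\pi] = \mc H_Q$ ; comme $\mc M_{s^{\mr{univ}}}(Q)^+$ est le facteur entier d\'ecoup\'e par le polyn\^ome de Weierstra\ss{} $Q$, c'est un $\calC(U,\calO)$-module de type fini, de sorte que $\calR$ est fini sur $\calC(U,\calO)$, comme requis.

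\emph{Troisi\`eme \'etape : interpolation des valeurs propres.} On prend pour $F \in \calR$ l'image de $\mr U_{\pi} \in \mc H$ dans $\calR = \mc H^+_Q$ : c'est la fonction continue tautologique param\'etrant les valeurs propres de $\mr U_{\pi}$, qui co\"{i}ncide sur $\mc C_U$ avec l'inverse de la coordonn\'ee spectrale tir\'ee en arri\`ere de $\mc Z$. Pour tout $x \in \mr{Spa}(\calR[1/\pi],\calR)$ au-dessus de $s \in U$, le point $x$ correspond \`a un morphisme $\mc H_Q \to \kappa(x)$ qui intervient dans $\mc M_{s^{\mr{univ}}}(Q) \otimes_{\calC(U,\calO[1/\pi])} \kappa(x)$ ; l'espace propre g\'en\'eralis\'e de $\mr U_{\pi}$ pour la valeur propre $F(x)$ y est non nul, et $\mr U_{\pi}$ y \'etant inversible, on en extrait un vecteur propre $f_x$. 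C'est une forme modulaire de Drinfeld surconvergente de poids $s$, section de $\omega^{(\chi,s^{\mr{univ}})}$ sur $\overline{\mc M}_{\gern}(v)\times \underline{\mb Z_p}$, et v\'erifiant $\mr U_{\pi} f_x = F(x) f_x$. Au point $x_k$, la forme $f$ de la premi\`ere \'etape fournit elle-m\^eme un tel vecteur propre dans $\mc M_{k^{\mr{univ}}}(Q) \otimes \kappa(x_k)$, de valeur propre $F(x_k)=a_{\pi}$ ; on pose alors $f_{x_k} := f$.

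La difficult\'e principale est la premi\`ere \'etape : il s'agit de v\'erifier que $f$ d\'efinit r\'eellement un point de $\mc C$, c'est-\`a-dire que l'inclusion des formes classiques de poids $k$ et de pente finie dans les formes surconvergentes est injective et $\mr U_{\pi}$-\'equivariante, et que la finitude de la pente place le syst\`eme de valeurs propres de $f$ dans un facteur $\mc M_{k^{\mr{univ}}}(Q)$ effectivement d\'ecoup\'e par $\mc C_U$ ; c'est l'analogue, dans notre contexte, du fait classique que les formes propres de pente finie sont surconvergentes, dont la m\'ethode des s\'eries de Kassaei de la section pr\'ec\'edente fournit les ingr\'edients. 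Les \'etapes restantes sont un d\'evissage formel des th\'eor\`emes \ref{thm:eigencurve} et \ref{varietespectrale}, moyennant un soin de routine sur les mod\`eles entiers et sur le passage entre $\calC(U,\calO)$ et $\calC(U,\calO[1/\pi])$.
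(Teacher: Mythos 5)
Votre d\'emarche suit la m\^eme strat\'egie globale que l'article (d\'evissage des th\'eor\`emes \ref{varietespectrale} et \ref{thm:eigencurve}), mais elle passe sous silence le point qui constitue pr\'ecis\'ement tout le contenu de la preuve du corollaire dans l'article : l'isomorphisme de changement de base
\[
\mc M_{s^{\mr{univ}},v} \otimes_{s} \mc O[1/\pi] \;\cong\; \mr H^0(\overline{\mc M}_{\gern}(v), \omega^{(\chi,s)}).
\]
Dans votre troisi\`eme \'etape, le vecteur propre que vous extrayez vit dans la fibre $\mc M_{s^{\mr{univ}}}(Q)\otimes \kappa(x)$ du module de familles ; affirmer que c'est ``une forme modulaire de Drinfeld surconvergente de poids $s$'' revient exactement \`a supposer que la sp\'ecialisation en $s$ du module de familles s'identifie \`a $\mr H^0(\overline{\mc M}_{\gern}(v),\omega^{(\chi,s)})$. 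De m\^eme, pour obtenir $f_{x_k}=f$ dans votre premi\`ere \'etape, il faut la surjectivit\'e de cette sp\'ecialisation en $s=k$ : avec les notations de l'article, $\mc M_{k,v}$ d\'esigne d\'ej\`a le module de familles (sections sur $\overline{\mc M}_{\gern}(v)\times\underline{\mb Z_p}$), et $f$ ne vit a priori que dans la fibre. Ce n'est pas formel, et c'est l\`a que se trouve la vraie difficult\'e.

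La preuve de l'article est enti\`erement consacr\'ee \`a ce point : on observe d'abord que $\overline{\mc M}_{\gern}(v)$ est affino\"ide (car $\omega$ est ample, par d\'evissage \`a \cite[Theorem 5.3]{Pink2013}), puis on montre que $\mr H^1$ du faisceau $\omega^{(\chi,s^{\mr{univ}})}$ s'annule en appliquant l'acyclicit\'e des recouvrements de \v{C}ech du th\'eor\`eme \ref{thm:sheafy2} \`a $\mr{Spa}(R_\infty,R_\infty^+)$, et la comparaison de \cite[Corollary~3.4.7]{tamme} entre cohomologie de \v{C}ech et cohomologie des faisceaux. Votre derni\`ere remarque localise bien une difficult\'e dans la premi\`ere \'etape, mais la diagnostique mal : la m\'ethode des s\'eries de Kassaei sert \`a la classicit\'e (section suivante), pas au changement de base des familles. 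Le reste de votre d\'evissage (construction de $\calR=\mc H^+_Q$ via le th\'eor\`eme de pr\'eparation, d\'efinition de $F$ comme image de $\mr U_{\pi}$, extraction d'un vecteur propre dans le facteur $Q$) est correct et correspond \`a ce que l'article laisse implicite.
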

\begin{proof}
{En effet, on remarque tout d'abord que $\overline{\mc M}_{\gern}(v)$ est affino\"ide car $\omega$ est ample par dvissage  \cite[Theorem 5.3]{Pink2013}; notons donc $\overline{\mc M}_{\gern}(v)=\mr{Spa}(R,R^+)$.  Pour montrer  le changement de base dsir: \[ M_{s^{\mr{univ}},v} \otimes_s \mc O[1/\pi] \cong \mr H^0(\overline{\mc M}_{\gern}(v), \omega^{(\chi,s)}), \] il suffit de montrer que le premier groupe de cohomologie de $\omega^{(\chi,s^{\mr{univ}})}$ s'annule i.e., $\mr H^1=0$. Or, la m\^eme preuve qu'au th\'eor\`eme \ref{thm:sheafy2} s'applique \`a $\mr{Spa}(R_{\infty},R^+_{\infty})$ et pour conclure on rappelle que la cohomologie de \v{C}ech calcule le $\mr H^1$ par \cite[Corollary~3.4.7]{tamme}.}
\end{proof}
\noindent N.B. Quitte \`a restreindre $U$, on peut mme supposer que $v_{\pi}(F)$ est constante dans le corollaire ci-haut.

\begin{rmk}
Si $k$ et $k'$ appartiennent au m\^eme ouvert $V_n$ (associ\'e \`a la partie de pente plus petite que $a/b$) la dimension de  l'espace des formes modulaires de Drinfeld surconvergentes de pente plus petite que $a/b$ et de poids $k$ est la m\^eme que pour l'espace de formes de poids $k'$. Comme la construction de $V_n$ est tout  fait explicite, nous sommes optimistes quant  l'existence de r\'esultats dans le style de la conjecture de Gouv\^ea--Mazur.
\end{rmk}

\begin{rmk}
M\^eme si la vari\'et\'e spectrale est quasi-finie sur $\underline{\mb Z_p}$, nous sommes incapables de d\'emontrer que l'alg\`ebre de Hecke agissant sur les familles est de type fini, mme localement. Nous rejettons le blme sur l'aspect non-noeth\'erien de notre contexte, et nous ne pouvons donc pas exclure l'existence ventuelle d'une infinit de familles dformant une forme donne, tout le moins quand la multiplicit\'e de sa valeur propre pour $\mr U_\pi$ est sup\'erieure \`a $1$.
\end{rmk}

Nous avons dj soulign le fait que $\mr{Spa}(\Lambda,\Lambda)$ n'est pas un bon objet pour faire de la g\'eom\'etrie.   Pour essayer de rsoudre ce problme, nous changeons la topologie de l'id\'eal maximal pour la remplacer par une topologie $\pi$-adique. Nous introduisons donc un nouveau candidat pour l'espaces des poids qui est un sous-ensemble des points analytiques de $\mr{Spa}(\Lambda,\Lambda)$. Nous d\'emontrons que cet espace est d'emploi plus commode : en particulier, il est faisceautique.

Voici les dtails de la construction de notre espaces des poids. Tout d'abord, on plonge $\Lambda[1/\pi]$ dans l'espace des fonctions continues $\mc C(\mb Z_p, \mc O[1/ \pi])$, et on  d\'efinit 
\[
\Lambda^+ := \set{f(s) \in \Lambda[1/\pi] \vert v_{\pi}(f) \geq 0 }.
\]
Cet anneau contient les images des \'el\'ements $\frac{T_i}{\pi^{v_{\pi}((1+z_i)^s-1)}}$, et il possde un unique id\'eal maximal, engendr\'e par $\pi$. De plus il est int\'egralement clos dans $\Lambda[1/\pi]$.

\begin{dfn}
On d\'efinit l'espaces des poids  $\mc W := \mr{Spa}(\Lambda[1/\pi],\Lambda^+)$. C'est un espace adique sur $\mr{Spa}(\mc O, \mc O)_{\mr{an}}=\mr{Spa}(\mc O[1/\pi], \mc O)$.
 \end{dfn} 
\begin{thm}\label{thm:sheafy}
L'espace adique $\mc W$ est faisceautique, c'est-\`a-dire que $(\mc O_{\mc W}, \mc O_{\mc W}^+)$ est un faisceau. De plus, pour tout recouvrement de \v{C}ech fini par des ouverts rationnels et tout faisceau en $(\mc O_{\mc W}, \mc O_{\mc W}^+)$-modules coh\'erents, le complexe de \v{C}ech associ\'e est acyclique.
\end{thm}

\begin{proof}
On note que $\mc W$ est isomorphe \`a la boule ouverte de dimension infinie de rayons $(v_{\pi}((1+z_i)^s-1))_i$. 
On vrifie d'abord que $\mc W$ est stablement uniforme. Soit $U$ un ouvert rationnel tel que $\mc O_X(U)=\Lambda[1/\pi] \langle \frac{t_i}{s_i} \rangle$. S'il existe $k \in \mb Z_p$ tel que $s_i(k) \neq 0$ pour tout $i$ et $f(s)$ est un \'el\'ement de $\mc O_X(U)$ \`a puissances born\'ees, alors $v_{\pi}(f^n(s))=n v_{\pi}(f(s))$, donc on doit avoir $v_{\pi}(f^n(s)) \geq 0$ et $\mc O_X(U)$ est alors uniforme. 
S'il n'existe pas un tel $k$, alors $s_1 \cdots s_n = 0 \in \Lambda[1/\pi]$, donc $\mc O_X(U)=0$ et $U = \emptyset$. On conclut donc que $\mc W$ est stablement uniforme et on peut appliquer \cite[Theorem 7]{BuzzV}, donnant ainsi la proprit faisceautique de l'espace $\mc W$. La preuve de ce th\'eor\`eme nous donne aussi l'acyclicit\'e de tous les recouvrements de \v{C}ech finis par des ouverts rationnels cf. \cite[\S 8.2]{BGR}.
\end{proof} 
\begin{rmk} Il est plausible qu'une variante de  \cite[Lemma 10.127.5]{StacksProject} nous permette de gnraliser la construction de la varit spectrale aux sries de Fredholm sur $\Lambda^+$. Avec cet propri\'et\'e, la question qui se pose est videmment : nos familles continues se prolongent-elles sur la boule de dimension infinie $\mc W$ ? 
\end{rmk}

\section{Classicit\'e des formes de petite pente}

{Dans la section pr\'ec\'edente, nous avons construit des familles de formes modulaires de Drinfeld surconvergentes. Il est trs dsirable de savoir reconnatre les formes modulaires de Drinfeld classiques parmi les formes surconvergentes. Dans cette section, nous d\'emontrons que si l'on a une forme de pente qui soit petite par rapport au poids, elle est alors forcment classique. Nous suivons la m\'ethode de prolongement analytique de Kassaei et Buzzard. La relation $\mr U_\pi f = a_\pi f$ et la d\'efinition de l'op\'erateur \[
\mr U_{\pi} (f)(x):= \frac{1}{\pi^{r-1}} \sum_{y} f(y), 
\] sont envisages comme des \'equations fonctionnelles. En \'etudiant la correspondance dfinissant $\mr U_\pi$, on voit que si $f(x)$ est d\'efini alors $f(y)$ est d\'efini aussi pour tous les $y$ sauf un. 
On peut donc prolonger la forme $f$ en $y$ et, en it\'erant ce processus de proche en proche en s'loignant progressivement du lieu ordinaire-multiplicatif. Pour le prolongement au lieu ordinaire-\'etale, on doit utiliser la srie de Kassaei. Celle-ci utilise la dichotomie suivante : si $x$ est dans le lieu ordinaire-\'etale, on a d'un ct les ``bons'' $y$, pour lesquels $f(y)$ est d\'efini, et les mauvais ``$y$'' qui eux demeurent dans le lieu ordinaire-\'etale. On itre ce processus pour les mauvais $y$ et on obtient une srie infinie qui ne fait apparatre que les $f(y)$ o\`u $f$ est d\'ej\`a d\'efinie en $y$.  On voit ensuite que la srie  converge si la pente est petite par rapport au poids. Ce fait nous permet de prolonger analytiquement $f$ sur le lieu ordinaire-\'etale via cette srie. D\`es que son domaine est assez grand, on peut conclure que la forme est alg\'ebrique gr\^ace au th\'eor\`eme du prolongement analytique et le principe de GAGA rigide.}

\subsection{Dynamique de l'oprateur $\mr U_{\pi}$}

Soit $\overline{M}_{\gern,\mr{drap}}$ la vari\'et\'e modulaire de Drinfeld de niveau $K(\gern) \cap K_0(\gerp)$ sur $\mc O[1/\pi]$. La varit ouverte ${M}_{\gern,\mr{drap}}$ param\`etre les modules de Drinfeld $\mc E$ de rang $r$ munis d'une structure de niveau $\gern$ (qui par la suite sera volontairement ignore) et un sous-groupe strict $H$ d'ordre $q^d$ de la $\gerp$-torsion. 
Il existe, via le sous-groupe canonique, une section qui envoie $\overline{M}^{\mr{rig}}_{\gern}(v)$ vers $\overline{M}^{\mr{rig}}_{\gern,\mr{drap}}$ .

Rappelons que l'op\'erateur $\mr U_{\pi}$ est induit par la correspondance   $\mf C_{\pi}$ qui param\`etre, en plus de $\mc E$ et $H$, un suppl\'ementaire g\'en\'erique $L$ de $H$.  De plus, on a deux fl\`eches $p_1$ et $p_2$ telles que $p_1(\mc E, H, L)= (\mc E, H)$ et $p_2(\mc E, H, L)=(\mc E / L, \mr{Im}(H \rightarrow \mc E[\gerp]/L) )$. On voit donc $\mr U_{\pi}$ comme une fonction multivalente, et
\[ \mr U_{\pi}(\mc E,H)=\set{(\mc E / L, \mr{Im}(H \rightarrow \mc E[\gerp]/L) ) \vert (L \times H) \times \mc O[1/\pi] \stackrel{\cong}{\rightarrow} \mc E[\gerp] \times \mc O[1/\pi] }.
\]

Soit $H$ un $A_{\gerp}$-module strict de rang $q^d$ ; il est localement de la forme $\mr{Spec}\left(R[Z]/ a Z + Z^{q^d}\right)$ et  on d\'efinit $\mr{deg}_{\pi}(H) := v_{\pi}(a) \in [0,1]$. Plus g\'en\'eralement, suivant \cite{FarguesCrelle}, pour tout $A_{\gerp}$-module strict et fini $G$, on d\'efinit  le degr\'e de $G$ via le $0$-ime idal de Fitting : 
\[
\mr{deg}_{\pi}(G) := v_{\pi}(\mr{Fitt}_{\omega_G}).
\] 

\noindent Explicitement, si $\omega_G \cong \bigoplus \calO /x_i \calO$, alors $\deg_{\pi}(G) = \sum v_{\pi}(x_i).$

Le degr\'e satisfait les proprits suivantes :
\begin{prop}\label{prop:degree}
\begin{enumerate}
\item  Si $0 \rightarrow G_1 \rightarrow G_2 \rightarrow G_3 \rightarrow 0$ est une suite exacte, alors \[ \mr{deg}_{\pi}(G_2) = \mr{deg}_{\pi}(G_1)+ \mr{deg}_{\pi}(G_3). \]
\item $ \mr{deg}_{\pi}(G)+\mr{deg}_{\pi}(G^{\vee}) = \mr{ht}_{\pi} G$, où $G^{\vee}$ est le dual strict de $G$ {au sens de Faltings} (et qui co\"incide avec le dual de Cartier--Taguchi).
\item  Si $f : G \rightarrow G'$ est un morphisme de sch\'emas en groupes stricts et finis qui induit un
isomorphisme en fibre g\'en\'erique, alors $\mr{deg}_{\pi}(G) \leq \mr{deg}_{\pi}(G')$. De plus, il y a \'egalit\'e si et seulement $f$ est un isomorphisme.
\end{enumerate}
\end{prop}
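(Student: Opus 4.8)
The plan is to transcribe, mutatis mutandis, the arguments of Fargues \cite{FarguesCrelle} to the present setting, replacing finite locally free commutative group schemes by finite strict $A_{\gerp}$-modules and Cartier duality by the strict duality of Faltings \cite{FaltStrict}. Since all three assertions involve only $v_{\pi}$ and are insensitive to faithfully flat base change, one may first reduce to the case where the base is $\mr{Spec}(R)$ with $R$ the ring of integers of a complete algebraically closed nonarchimedean extension of $F_{\gerp}$. Over such an $R$ every finitely presented torsion module is a finite direct sum $\bigoplus_i R/a_i R$, so that $\mr{deg}_{\pi}(G)=\sum_i v_{\pi}(a_i)$ is simply the $\pi$-adic length of $\omega_G$; in particular the invariant $G\mapsto\mr{deg}_{\pi}(G)$ is additive along short exact sequences of such modules and vanishes precisely when $\omega_G=0$.

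For (1), I would use the conormal exact sequence attached to $0\to G_1\to G_2\to G_3\to 0$: pulling back the sheaves of invariant differentials along the unit section produces the exact sequence $0\to\omega_{G_3}\to\omega_{G_2}\to\omega_{G_1}\to 0$, left-exactness coming from flatness of the quotient map $G_2\to G_3$ (a statement about the underlying group schemes, so Fargues' argument applies unchanged). Additivity of the $\pi$-adic length then gives $\mr{deg}_{\pi}(G_2)=\mr{deg}_{\pi}(G_1)+\mr{deg}_{\pi}(G_3)$. For (3), I would first reduce to the case of a closed immersion: over $R$ the scheme-theoretic image $H$ of $f$ is a finite strict submodule of $G'$, and $G\twoheadrightarrow H$ is faithfully flat with kernel a finite strict module that is trivial on the generic fibre, hence trivial; thus $G\xrightarrow{\ \sim\ }H$ and we may assume $f:G\hookrightarrow G'$. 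Then $G'':=G'/G$ is a finite strict module, trivial on the generic fibre, and (1) yields $\mr{deg}_{\pi}(G')=\mr{deg}_{\pi}(G)+\mr{deg}_{\pi}(G'')\geq\mr{deg}_{\pi}(G)$. Equality holds iff $\mr{deg}_{\pi}(G'')=0$, iff $\omega_{G''}=0$, iff $G''$ is étale; being in addition trivial on the generic fibre over the connected base $\mr{Spec}(R)$, such a $G''$ is trivial, i.e. $f$ is an isomorphism.

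The heart of the matter is (2). The duality $G\mapsto G^{\vee}$ and its basic properties (perfectness, exactness of $(\cdot)^{\vee}$) are furnished by Faltings' theory of strict $\mathcal{O}$-actions \cite{FaltStrict}; granting this, one deduces (2) as in \cite{FarguesCrelle}. Since both $G\mapsto\mr{deg}_{\pi}(G)+\mr{deg}_{\pi}(G^{\vee})$ and $G\mapsto\mr{ht}_{\pi}(G)$ are additive along short exact sequences (by (1) and exactness of duality), it suffices to verify the identity on a class of modules generating all finite strict modules under extensions and subquotients; the natural such class is the $\gerp^n$-torsion of $\gerp$-divisible modules. For these $\mr{deg}_{\pi}$ of the $\gerp^n$-torsion equals $n$ times the dimension, and the dimensions of a $\gerp$-divisible module and of its dual add up to its height — precisely what the Hodge–Tate–Taguchi map of Theorem \ref{thm:sousgroupe} records — so that $\mr{deg}_{\pi}(G)+\mr{deg}_{\pi}(G^{\vee})=\mr{ht}_{\pi}(G)$ there. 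Passing from this class to an arbitrary finite strict $G$ requires realizing $G$ as a submodule of the $\gerp^n$-torsion of a $\gerp$-divisible module (a Raynaud-type embedding) and then inducting. A reassuring check in the smallest case: the proof of Theorem \ref{thm:sousgroupe}(ii) exhibits a height-one $G$ as $\mr{Spec}(R[Z]/(aZ+Z^{q^d}))$ and $G^{\vee}$ as $\mr{Spec}(R[W]/(bW+W^{q^d}))$ with $v_{\pi}(a)+v_{\pi}(b)=1=\mr{ht}_{\pi}(G)$, which is (2) on the nose.

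The main obstacle is therefore (2), and specifically assembling (or cleanly citing) two ingredients over the non-Noetherian bases at play: Faltings' strict Dieudonné/duality formalism, and the Raynaud-type embedding of a finite strict module into the $\gerp^n$-torsion of a $\gerp$-divisible module. Once these are granted, (1) and (3) are the elementary consequences of the cotangent exact sequence and of rigidity sketched above, and the whole statement is an $A_{\gerp}$-module paraphrase of \cite{FarguesCrelle}.
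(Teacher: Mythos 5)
Le seul contenu de la preuve du papier est la citation dont vous partez : les auteurs affirment que les d\'emonstrations de \cite[Lemme 4, Corollaire 3]{FarguesCrelle} se transposent telles quelles, en renvoyant de plus \`a la section 10.2 de {\it loc. cit.}, o\`u Fargues traite d\'ej\`a la variante avec action stricte de $\mc O$. Votre traitement de (1) (suite conormale $0\to\omega_{G_3}\to\omega_{G_2}\to\omega_{G_1}\to 0$, l'exactitude \`a gauche venant de la platitude de $G_2\to G_3$) et de (3) (factorisation par l'image sch\'ematique, puis (1), puis le fait qu'un module strict fini et plat g\'en\'eriquement trivial, resp.\ \'etale et g\'en\'eriquement trivial, est trivial) est exactement l'argument de Fargues ; sur ces deux points vous suivez la route du papier et les d\'etails sont corrects.

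Pour (2), en revanche, votre itin\'eraire s'\'ecarte de celui de Fargues et comporte une lacune, m\^eme en admettant ses ingr\'edients. Fargues \'etablit $\mr{deg}\,G+\mr{deg}\,G^{\vee}=\mr{ht}\,G$ directement \`a partir de la dualit\'e (complexe de co-Lie et description explicite du dual ; dans le cadre strict, c'est pr\'ecis\'ement ce que fournit la section 10.2 de {\it loc. cit.}, et votre v\'erification sur les blocs de hauteur $1$ via $ab=\pi$ en est le c\oe ur), sans jamais plonger $G$ dans la torsion d'un module divisible. Votre substitut exige d'abord un plongement \`a la Raynaud $G\hookrightarrow H[\gerp^n]$ pour les $A_{\gerp}$-modules stricts finis, qui n'est pas disponible sur \'etag\`ere ici (vous le signalez vous-m\^eme). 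Plus grave : m\^eme en l'admettant, le d\'evissage ne se referme pas. Posant $\delta(G):=\mr{deg}_{\pi}G+\mr{deg}_{\pi}G^{\vee}-\mr{ht}_{\pi}G$, l'additivit\'e de $\delta$ et l'annulation $\delta(H[\gerp^n])=0$ ne donnent que $\delta(G)=-\delta(H[\gerp^n]/G)$, et non $\delta(G)=0$ : une classe qui engendre par \emph{sous-quotients} ne suffit pas pour un invariant additif, et le mot \og inducting \fg{} cache l'\'etape manquante (il faudrait par exemple une in\'egalit\'e a priori $\delta\geq 0$ valable pour tout $G$). Le rem\`ede est simplement de faire ce que fait le papier : reprendre la preuve de Fargues de la formule de dualit\'e dans le cadre strict, plut\^ot que ce d\'evissage.
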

\begin{proof}
Les m\^emes preuves que \cite[Lemme 4, Corollaire 3]{FarguesCrelle} fonctionnent heureusement aussi bien dans notre contexte; voir aussi la section 10.2 de {\it loc. cit.} {Nous fournissons une esquisse de preuve pour les groupes tu\'es par $\pi$ en utilisant leurs formes explicites, comme complment aux preuves conceptuelles de {\it loc. cit.}}

{Pour traiter les deux derniers points de la proposition, on se restreint au cas de hauteur $1$, les hauteurs sup\'erieures \'etant traites de manire similaire.}  Par la th\'eorie de Oort--Tate, le groupe $G$ est alors de la forme 
\[G=\mr{Spec}\left(R[Z]/ a Z + Z^{q^d}\right).\] On commence par l'ingalit\'e du point (3). Soit $G'=\mr{Spec}\left(R[Z']/ a' Z' + {Z'}^{q^d}\right)$. Le morphisme $G \rightarrow G'$ est induit par la fl\`eche
\[
R[Z']/ (a' Z' + {Z'}^{q^d}) \rightarrow R[Z]/ (a Z + Z^{q^d})
\] qui envoie $Z'$ dans $cZ$, ce qui implique qu'on a les \'equations explicites :
\[
a'cZ+c^{q^d}Z^{q^d}=a Z + Z^{q^d}=0.
\]
En passant aux valuations, on obtient l'galit :
\[
v_\pi(a')=v_\pi(a)+(q^d-1)v_p(c),
\]
et on obtient finalement l'ingalit\'e dsire. Si $G$ et $G'$ sont isomorphes, alors forcment $v_\pi(a')=v_\pi(a)$.   Dans l'autre direction, si $\mr{deg}_{\pi}(G) = \mr{deg}_{\pi}(G')$ alors $v_\pi(a')=v_\pi(a)$ et par Oort--Tate, ils sont isomorphes. 

Pour le point (2) on remarque que si $G  = \mr{Spec}\left(R[Z]/ a Z + Z^{q^d}\right)$ alors son dual au sens de Faltings est $G^{\vee}=\mr{Spec}\left(R[Z]/ (\frac{\pi}{a} Z + Z^{q^d}) \right)$ et l'on conclut immdiatement.

Pour traiter le point (1) on considre des sch\'emas en groupes de n'importe quelle hauteur, car en hauteur un, il n'y a strictement rien  dmontrer. Toujours par la thorie de Oort--Tate et Raynaud \cite{Raynaud}, on consid\`ere  la suite exacte \[  R[X_1,\ldots,X_{f_1}]/ (a_i X_i + X_{i+i}^{q^d}) \hookrightarrow  R[Y_1,\ldots,Y_{f_2}]/ (b_i Y_i + Y_{i+1}^{q^d}) \twoheadrightarrow R[Z_1,\ldots, Z_{f_3}]/ (c_i Z_i + Z_{i+1}^{q^d}),
\]
o\`u les indices sont indiqus modulo $f_j$. (On remarque que $f_j$ est la hauteur du groupe $G_j$.) On a $\mr{deg}_{\pi}(G_1) = \sum_i v_{\pi}(a_i)$, et similairement pour les autres cas. Comme la suite est exacte, la suite demeure exacte au niveau des faisceaux de diff\'erentielles \cite[Proposition 2]{FaltStrict} et on a, \`a unit\'es pr\`es, que 
\[
\set{b_1,\ldots,b_{f_2}}=\set{a_1,\ldots,a_{f_1},c_1,\ldots, c_{f_3}}.
\]
 
\end{proof}

On d\'efinit donc un morphisme $\mr{deg}_{\pi}: \overline{M}^{\mr{rig}}_{\gern,\mr{drap}} \rightarrow [0,1]$ qui envoie $(\mc E,H)$ dans $\mr{deg}_{\pi}(H)$. Par le th\'eor\`eme \ref{thm:sousgroupe} on voit immdiatement que $ \mr{deg}_{\pi}(\overline{M}^{\mr{rig}}_{\gern}(v)) \subset (1-v,1]$. On peut montrer bien davantage, mais tout d'abord il faut prouver le lemme suivant. Pour tout intervalle $I \subset [0,1]$ on d\'efinit $\overline{M}^{\mr{rig}}_{\gern,\mr{drap}} I := \mr{deg}_{\pi}^{-1}(I)$.

\begin{lem} \label{precedent}
Soient $y=(\mc E, H)$ dans $\overline{M}^{\mr{rig}}_{\gern,\mr{drap}}[0,1]$ et $x=(\mc E / L, H'=(H+L)/L) \in \mr U_{\pi}(y)$. Alors $\mr{deg}_{\pi}(y) \leq \mr{deg}_{\pi}(x)$ avec \'egalit\'e si et seulement si $\mr{deg}_{\pi}(y)=0$ ou $1$.
\end{lem}
\begin{proof}
On applique la proposition \ref{prop:degree} partie (3) \`a $H \rightarrow H'$. Si on a l'\'egalit\'e, alors $H \cong H'$ et $L$ est un compl\'ementaire de $H$ en fibre sp\'eciale aussi.   Donc $\mc E[\gerp] \cong H \times L$, et $H$ et $L$ sont les troncatures de deux groupes $\gerp$-divisibles. Donc soit $H$ est le premier cran du module de Carlitz--Hayes, soit il est \'etale.
\end{proof}

\begin{prop} \label{prop:rangetale}
Soit $y=(\mc E, H)$ dans $\overline{M}^{\mr{rig}}_{\gern,\mr{drap}}[0,0]$. Le nombre de points $x=(\mc E / L, H'=(H+L)/L) \in \mr U_{\pi}(y)$ tels que  $\mr{deg}_{\pi}(y) = \mr{deg}_{\pi}(x)=0$ ne dpend que du rang \'etale de $\mc E$.
\end{prop}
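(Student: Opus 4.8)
The plan is to turn the count into a purely linear-algebraic one over the residue field, by propagating the connected-étale sequence of $\mc E[\gerp]$ through the degree formalism. First I would pin down the set to count: the $x=(\mc E/L,H')$ in $\mr U_\pi(y)$ correspond to the strict finite $A_\gerp$-module subschemes $L\subset\mc E[\gerp]$, over the (valuation ring $\mc O_K$ of the) point $y$, that are generic complements of $H$. The composite $H\hookrightarrow\mc E[\gerp]\to\mc E[\gerp]/L$, i.e. $H\to H'=(H+L)/L$, is an isomorphism on the generic fibre, so Proposition~\ref{prop:degree}(3) gives $\mr{deg}_\pi(H)\le\mr{deg}_\pi(H')$ with equality iff $H\to H'$ is an isomorphism over $\mc O_K$. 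Since $\mr{deg}_\pi(y)=\mr{deg}_\pi(H)=0$, the condition $\mr{deg}_\pi(x)=0$ is exactly $H\xrightarrow{\sim}H'$ over $\mc O_K$, which — precisely as in the proof of Lemma~\ref{precedent} — is equivalent to $\mc E[\gerp]=H\oplus L$ as finite flat group schemes over $\mc O_K$ (the multiplication map $H\times L\to\mc E[\gerp]$ is then a monomorphism of equal rank, hence an isomorphism). So the set to count is $\Sigma:=\{L\subset\mc E[\gerp]:\mc E[\gerp]=H\oplus L\text{ over }\mc O_K\}$, and we count its geometric points, i.e. the points of the finite correspondence lying over $y$.

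Next I would compute $\Sigma$ via the connected-étale sequence $0\to\mc E[\gerp]^\circ\to\mc E[\gerp]\to\mc E[\gerp]^{\mr{et}}\to0$, which exists with finite flat terms over the henselian $\mc O_K$. Because $\mr{deg}_\pi(H)=0$, the local description of $H$ shows it is étale over $\mc O_K$, so $H\cap\mc E[\gerp]^\circ=0$ and $\overline H:=\mathrm{im}(H\to\mc E[\gerp]^{\mr{et}})$ is a rank-$q^d$ étale subgroup isomorphic to $H$. If $\mc E[\gerp]=H\oplus L$, then projection onto the first factor kills the connected subgroup $\mc E[\gerp]^\circ$, whence $\mc E[\gerp]^\circ\subset L$; reducing the splitting modulo $\mc E[\gerp]^\circ$ shows $\overline L:=L/\mc E[\gerp]^\circ$ is an étale complement of $\overline H$ in $\mc E[\gerp]^{\mr{et}}$. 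Conversely, the preimage in $\mc E[\gerp]$ of any étale complement of $\overline H$ lies in $\Sigma$, by the same rank/monomorphism argument. So $L\mapsto L/\mc E[\gerp]^\circ$ is a bijection between $\Sigma$ and the set of étale complements of $\overline H$ in $\mc E[\gerp]^{\mr{et}}$.

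Finally, $\mc E[\gerp]^{\mr{et}}$ is an étale $A/\gerp=\mb F_{q^d}$-module scheme whose geometric rank is, by Proposition~\ref{prop:EO=New=prank} applied to the special fibre (and, along the cusps, via the structure of the $\gerp^\infty$-torsion of the generalized module), $r-h$ with $h$ the height — i.e. exactly what we call the étale rank $r^{\mr{et}}$ of $\mc E$; in particular $r^{\mr{et}}\ge1$ on the locus $[0,0]$. Geometrically $\mc E[\gerp]^{\mr{et}}\cong\mb F_{q^d}^{\,r^{\mr{et}}}$ with $\overline H$ a line, so the number of complements is $q^{d(r^{\mr{et}}-1)}$, depending only on $r^{\mr{et}}$, as claimed. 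The main delicate point is the scheme-theoretic bookkeeping over the equal-characteristic valuation ring $\mc O_K$ (flatness of the connected-étale sequence; monomorphisms of finite flat group schemes of equal rank being isomorphisms), together with the behaviour along the boundary where $\mc E$ is only a generalized Drinfeld module; one must also be careful to count geometric points of the correspondence, since the number of $K$-rational $L$'s would depend on the Galois action on $\mc E[\gerp]^{\mr{et}}$ rather than on $r^{\mr{et}}$ alone.
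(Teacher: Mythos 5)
Your argument is correct and follows essentially the same route as the paper: degree equality forces the splitting $\mc E[\gerp]=H\times L$, the connected--étale sequence shows $\mc E[\gerp]^{\circ}\subset L$ so that complements $L$ biject with étale complements of the image of $H$ in $\mc E[\gerp]^{\mr{\acute et}}$, and the count of these depends only on the étale rank. You simply supply more detail than the paper (the explicit count $q^{d(r^{\mr{et}}-1)}$, the reduction of the condition $\mr{deg}_{\pi}(x)=0$ via Proposition~\ref{prop:degree}(3), and the remark about counting geometric rather than rational points), all of which is consistent with the paper's proof.
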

\begin{proof}
Si $\mr{deg}_{\pi}(y) = \mr{deg}_{\pi}(x)=0$, alors on a $\mc E[\gerp] = H \times L$. Comme $H$ est \'etale, en utilisant la d\'ecomposition \'etale-connexe de $\mc E[\gerp]$ on voit que $L=L' \times \mc E[\gerp]^{\circ}$, pour un certain $L'$ tale. Calculer le nombre de points $x$ comme dans l'nonc revient \`a calculer le nombre des suppl\'ementaires de $H$ dans $\mathcal{E}[\gerp]^{\mr{\text{\'et}}}$ ; comme les groupes sont \'etales, ceci revient \`a calculer le nombre de suppl\'ementaires g\'en\'eriques de $H$ et leur nombre ne d\'epend que du rang \'etale de  $E[\gerp]$.
\end{proof}
On obtient donc la proposition suivante :

\begin{prop}\label{prop:extto(0,1]}
Pour tout $ 0< t < t' <1$, il existe $N$ tel que $\mr U_{\pi}^{N} ( \overline{M}^{\mr{rig}}_{\gern,\mr{drap}}[t,1]) \subset \overline{M}^{\mr{rig}}_{\gern,\mr{drap}}[t',1]$. 
\end{prop}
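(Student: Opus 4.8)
The plan is to play off the monotonicity of the degree along the Hecke correspondence (Lemme \ref{precedent}) against a quasi-compactness argument supplying a uniform lower bound for the jump in degree at each step. Recall that $\mr U_{\pi}$ is induced by the correspondence $\mf C_{\pi}$, with its two maps $p_1,p_2\colon\mf C_{\pi}^{\mr{rig}}\to\overline{M}^{\mr{rig}}_{\gern,\mr{drap}}$, the map $p_1$ being finite, so that $\mr U_{\pi}(y)$ is the image under $p_2$ of the finite fibre $p_1^{-1}(y)$. Unwinding the definition, a point of $\mr U_{\pi}^{N}\bigl(\overline{M}^{\mr{rig}}_{\gern,\mr{drap}}[t,1]\bigr)$ is the last term $y_N$ of a chain $y_0,y_1,\dots,y_N$ with $\mr{deg}_{\pi}(y_0)\geq t$ and $y_{i+1}\in\mr U_{\pi}(y_i)$, each step $y_i\to y_{i+1}$ being witnessed by a point $w_i\in\mf C_{\pi}^{\mr{rig}}$ with $p_1(w_i)=y_i$ and $p_2(w_i)=y_{i+1}$.

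First I would set up the compactness. Since $\overline{M}_{\gern,\mr{drap}}$ is projective over $\mc O[1/\pi]$ and $p_1$ is finite, $\overline{M}^{\mr{rig}}_{\gern,\mr{drap}}$ and $\mf C_{\pi}^{\mr{rig}}$ are quasi-compact; and since $\mr{deg}_{\pi}$ is continuous on the rigid fibre (the Drinfeld analogue of Fargues' theory of the degree \cite{FarguesCrelle}, already invoked for Proposition \ref{prop:degree}), the closed tube $\overline{M}^{\mr{rig}}_{\gern,\mr{drap}}[t,t']=\mr{deg}_{\pi}^{-1}([t,t'])$ is quasi-compact, hence so is its preimage $W:=p_1^{-1}\bigl(\overline{M}^{\mr{rig}}_{\gern,\mr{drap}}[t,t']\bigr)$. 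On $W$ one has $\mr{deg}_{\pi}(p_1(w))\in[t,t']\subset(0,1)$, so Lemme \ref{precedent} forces the strict inequality $\mr{deg}_{\pi}(p_1(w))<\mr{deg}_{\pi}(p_2(w))$; the function $g:=(\mr{deg}_{\pi}\circ p_2)-(\mr{deg}_{\pi}\circ p_1)$ is therefore continuous and everywhere positive on the quasi-compact $W$, so $\varepsilon:=\inf_{W}g>0$ (if $W=\emptyset$ there is nothing to prove, take $N=1$). Fix $N\in\mb N$ with $N\varepsilon>t'-t$.

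Next I would run the dynamics. Let $x\in\mr U_{\pi}^{N}\bigl(\overline{M}^{\mr{rig}}_{\gern,\mr{drap}}[t,1]\bigr)$ and pick a chain $y_0,\dots,y_N=x$ as above. By Lemme \ref{precedent} the sequence $\mr{deg}_{\pi}(y_0)\leq\cdots\leq\mr{deg}_{\pi}(y_N)$ is non-decreasing. Suppose $\mr{deg}_{\pi}(y_N)<t'$; then $t\leq\mr{deg}_{\pi}(y_i)\leq\mr{deg}_{\pi}(y_N)<t'$ for every $i$, so each $w_i$ lies in $W$ and $\mr{deg}_{\pi}(y_{i+1})-\mr{deg}_{\pi}(y_i)=g(w_i)\geq\varepsilon$; summing over $i=0,\dots,N-1$ gives $\mr{deg}_{\pi}(y_N)\geq t+N\varepsilon>t'$, a contradiction. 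Hence $\mr{deg}_{\pi}(x)\geq t'$, i.e. $\mr U_{\pi}^{N}\bigl(\overline{M}^{\mr{rig}}_{\gern,\mr{drap}}[t,1]\bigr)\subset\overline{M}^{\mr{rig}}_{\gern,\mr{drap}}[t',1]$, as wanted.

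I expect the main obstacle to be the two geometric inputs: that the closed tube $\overline{M}^{\mr{rig}}_{\gern,\mr{drap}}[t,t']$ is quasi-compact and that $\mr{deg}_{\pi}$ is continuous — it is precisely this continuity, on a quasi-compact locus, that prevents the strictly positive $g$ from approaching $0$ and thereby yields the uniform gap $\varepsilon$. Both are consequences of Fargues' theory of the degree together with the projectivity of $\overline{M}_{\gern,\mr{drap}}$ and the finiteness of $p_1$, but they deserve to be spelled out carefully. I would also emphasise that the restriction $0<t<t'<1$ is indispensable: at the extreme values $0$ and $1$ the correspondence can fix the degree (an étale $H$, as in Proposition \ref{prop:rangetale}, or $H$ the first step of the Carlitz--Hayes module), so no uniform gap exists there and the conclusion genuinely breaks down.
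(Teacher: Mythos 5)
Votre démonstration est correcte et suit essentiellement la même stratégie que celle de l'article : combiner la monotonie du lemme \ref{precedent} avec une borne inférieure uniforme strictement positive pour le saut de degré sur le tube $\overline{M}^{\mr{rig}}_{\gern,\mr{drap}}[t,t']$, puis itérer. Le seul point où l'article est plus précis est la justification de cette borne : il réalise votre fonction $g$ comme la valuation de la fonction rigide explicite $p_1^*(a_H)^{-1}p_2^*(a_{H'})$ et invoque le principe du module maximal, ce qui est exactement la forme rigoureuse, en géométrie rigide, de l'argument de quasi-compacité et de continuité que vous signalez vous-même comme le point à détailler.
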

\begin{proof} Par le lemme \ref{precedent}, il suffit de v\'erifier que la diff\'erence entre $\mr{deg}_{\pi}(x)$ et $\mr{deg}_{\pi}(y)$ n'est pas arbitrairement petite. Soient $a_H$ et $a_H'$ les g\'en\'erateurs de $\omega_H$ et $\omega_{H'}$. La fonction $g=p_1^*(a_H)^{-1} {p_2^*(a_{H'})}$ satisfait $v_{\pi}(g) >0$ sur  $\overline{M}^{\mr{rig}}_{\gern,\mr{drap}}[t,t']$, et par le principe du module maximal, il possde un minimum $t_0$. Ceci implique que $\mr U_{\pi}( \overline{M}^{\mr{rig}}_{\gern,\mr{drap}}[t,1]) \subset \overline{M}^{\mr{rig}}_{\gern,\mr{drap}}[t+t_0,1]$. Si on choisit $N$ tel que $t+ Nt_0 > t'$, on obtient le rsultat.
\end{proof}

\subsection{La s\'erie de Kassaei} 
Soit $f$ une forme modulaire de Drinfeld surconvergente de poids $k \geq 0$ et propre pour $\mr U_{\pi}$ : $\mr U_{\pi} f = a_{\pi} f$. Elle est donc une section de $\omega^{k}$ sur $\overline{M}^{\mr{rig}}_{\gern}(v)$. Soit $y \in  {M}^{\mr{rig}}_{\gern,\mr{drap}}(0,1]$. Par la proposition pr\'ec\'edente, on peut trouver $N$ tel que $\mr U_{\pi}^N (y) \subset \overline{M}^{\mr{rig}}_{\gern}(v)$\footnote{car $\overline{M}^{\mr{rig}}_{\gern}(v)$ est un voisinage strict du lieu ordinaire-multiplicatif qui co\"incide avec $\mr{deg}_{\pi}^{-1}(1)$.} et donc on d\'efinit 
\begin{align}\label{eqn:extf}
f(y):=\frac{1}{{(a_{\pi}\pi^{r-1})}^{N}} \sum_{x \in \mr U_\pi^N(y)} f(x).
\end{align} 

La difficult\'e est l'extension \`a la partie  ${M}^{\mr{rig}}_{\gern,\mr{drap}}[0,0]$ : pour ce faire, on utilise l'astuce de la s\'erie de Kassaei \cite{KassaeiDuke} (voir aussi la g\'en\'eralisation considrable de \cite{BPS}). 
L'id\'ee est la suivante : soit $y$ un point de   ${M}^{\mr{rig}}_{\gern,\mr{drap}}[0,0]$ et supposons que $f$ est d\'ej\`a d\'efinie sur toute la vari\'et\'e modulaire, on a alors la rcriture suivante: 
\[ f(y)=\frac{1}{{a_{\pi}\pi^{r-1}}} \sum_{x \in \mr U^{\mr{good}}_\pi(y)} f(x) + \frac{1}{{a_{\pi}\pi^{r-1}}} \sum_{x \in \mr U^{\mr{bad}}_\pi(y)} f(x) \]
o\`u $\mr U^{\mr{good}}_\pi(y)$ sont les points dans $\overline{M}^{\mr{rig}}_{\gern,\mr{drap}}(0,1]$ et $\mr U^{\mr{bad}}_\pi(y)$ les points dans ${M}^{\mr{rig}}_{\gern,\mr{drap}}[0,0]$. On d\'efinit donc par rcurrence \begin{align*}
\mr U_{\pi}^{N,\mr{good}}(y) & := \set{x \in \mr U_{\pi}(x') \cap {M}^{\mr{rig}}_{\gern,\mr{drap}}(0,1]  \vert x' \in \mr U_{\pi}^{N-1, \mr{bad}}(y)}\\
\mr U_{\pi}^{N,\mr{bad}}(y) & := \set{x \in \mr U_{\pi}(x') \cap {M}^{\mr{rig}}_{\gern,\mr{drap}}[0,0]  \vert x' \in \mr U_{\pi}^{N-1,\mr{bad}}(y)}.
\end{align*}

En it\'erant cette astuce, on obtient 
\[ f(y)=\sum_{N=1}^{\infty} \frac{1}{{(a_{\pi}\pi^{r-1})}^N} \sum_{x \in \mr U^{N,\mr{good}}_\pi(y)} f(x) \]
et si $\vert \mr U^{N,\mr{good}}_\pi \vert \leq q^{-dcN}$ la s\'erie converge pourvu que $\frac{\pi^c}{a_{\pi}\pi^{r-1}}$ ait norme plus petite que $1$.
On d\'efinit donc une fonction $F$ sur ${M}^{\mr{rig}}_{\gern,\mr{drap}}[0,0]$ par la formule

\[ F(y):=\sum_{N=1}^{\infty} \frac{1}{{(a_{\pi}\pi^{r-1})}^N} \sum_{x \in \mr U^{N,\mr{good}}_\pi(y)} f(x). \]

\begin{lem}\label{lemma:quotientdeg1} Soit $x$ dans $\mr U^{\mr{bad}}_\pi(y)$, alors $f(x) \in \gerp^{k} \omega^{k}_x$. \end{lem}
\begin{proof}
Soit $x=(\mc E_x, H_x)$, on a donc $\mc E[\gerp] \cong H \times L$, o\`u $L$ est le suppl\'ementaire tel que $\mc E_x = \mc E / L$. Toujours par la proposition \ref{prop:degree}, on a $\mr{deg}_{\pi}(L)=\mr{deg}_{\pi}(\mc E[\gerp]) = 1$\footnote{\cite[Lemme 2.1]{PilDuke}}.
Les m\^emes calculs que dans les lemmes \ref{lemmeoupire} et \ref{lemme:TpUp} nous assurent que $p_2^* \omega_{\mc E_i / L_i} \subset \pi p_1^*\omega_{\mc E_i}$. Si $x$ appartient \`a $\mr U^{\mr{bad}}_\pi(y)$, alors on a donc que $f(x) \in \gerp^{k} \omega^{k}_x$.
\end{proof}

Par itration, on en d\'eduit que la s\'erie ci-haut converge si $k-r+1 > v_{\pi}(a_{\pi})$.

Il nous reste \`a d\'emontrer que $\mr U_{\pi}^{\mr{good}}(f)$ et $\mr U_{\pi}^{\mr{bad}}(f)$ sont des fonctions rigides-analytiques.

\begin{lem}\label{lemma:epsdelta}
Soit $N$ un entier. Il existe $\epsilon_N$ assez petit et un recouvrement admissible $\set{\mc U_{l,\epsilon_N}}$ de  ${M}^{\mr{rig}}_{\gern,\mr{drap}}[0,\epsilon_N]$ tel que sur  $ \mc U_{l,\epsilon_N} \setminus \mc U_{l+1,\epsilon_N}$ on peut d\'ecomposer $$\mr U_{\pi}^{N} = \mr U_{\pi}^{N,\mr{good}} \bigsqcup \mr U_{\pi}^{N,\mr{bad}} \bigsqcup \mr U_{\pi}^{N, N-1}$$ par des correspondances finies et \'etales. De plus, la suite des $\epsilon_N$ tend vers $0$. 
\end{lem}
\begin{proof}
On consid\`ere la stratification de $M_{\gern,1}$ donn\'e par le rang \'etale de $\mc E$, c'est--dire la stratification par le $\gerp$-rang. On d\'efinit donc  $\mc V_l$ comme l'image inverse par la fl\`eche d'oubli ${M}^{\mr{rig}}_{\gern,\mr{drap}}$ vers  ${M}^{\mr{rig}}_{\gern}$ et puis de sp\'ecialisation de ${M}^{\mr{rig}}_{\gern}$ vers $M_{\gern,1}$, du lieu o\`u le $\gerp$-rang de la partie \'etale est plus grand que ou \'egal \`a $l$. On dfinit $\mc U_{l,\epsilon}$ comme l'intersection de $\mc V_l$ avec ${M}^{\mr{rig}}_{\gern,\mr{drap}}[0,\epsilon]$. Si $\epsilon = 0$, par la proposition \ref{prop:rangetale}, on sait que si $x \in \mc U_{l,0} \backslash \mc U_{l+1,0}$, alors on peut dcomposer $\mr U_{\pi}$ en $\mr U_{\pi}^{\mr{good}} \sqcup \mr U_{\pi}^{\mr{bad}}$. Comme dans la preuve de \cite[Lemme 4.3.6]{BPS}, on peut choisir $\epsilon_1$ suffisamment petit tel que pour les points $x$ de $ \mc U_{l,\epsilon_1} \setminus \mc U_{l+1,\epsilon_1}$, le nombre de points de $ \mr U_{\pi}(x)$ de petit degr est fix. Par un lger abus de notation, on appelle alors cet ensemble de points $\mr U_{\pi}^{\mr{bad}}(x)$, et son complmentaire est $\mr U_{\pi}^{\mr{good}}(x)$. Dans la mme suite d'ides, on d\'efinit donc $\mr U_{\pi}^{N,\mr{good}} $ et $ \mr U_{\pi}^{N,\mr{bad}}$ par r\'ecurrence : $ \mr U_{\pi}^{N,\mr{bad}}$  est $ \mr U_{\pi}^{\mr{bad}} \circ \cdots \circ \mr U_{\pi}^{\mr{bad}} $, it\'er\'e $N$ fois, et $\mr U_{\pi}^{N,\mr{good}} $ est $ \mr U_{\pi}^{\mr{good}} \circ \mr U_{\pi}^{\mr{bad}} \circ \cdots \circ \mr U_{\pi}^{\mr{bad}}$ avec $N-1$ copies de $\mr U_{\pi}^{\mr{bad}}$.
Par la proposition 4.1.8 de \cite{BPS}, il est garanti que ces deux correspondances $ \mr U_{\pi}^{\mr{bad}}$ et  $\mr U_{\pi}^{\mr{good}}$ sont finies et tales; et ib. pour $\mr U_{\pi}^{N,\mr{good}}$ et $\mr U_{\pi}^{N,\mr{bad}}$. La correspondance $\mr U_{\pi}^{N,N-1}$ est donne par l'union finie des correspondances $\mr U_{\pi}^j \circ \mr U_{\pi}^{N-j, \mr{good}}$ pour $j \geq 1$, qui sont clairement aussi finies et tales. Les $\epsilon_N$ sont d\'efinis par r\'ecurrence : on veut que la $N-1$-i\`eme it\'eration de $\mr U_{\pi}^{\mr{bad}}$ tombe dans ${M}^{\mr{rig}}_{\gern,\mr{drap}}[0,\epsilon_1]$ ; le m\^eme raisonnement que dans la preuve de la proposition \ref{prop:extto(0,1]} sur la dynamique de $\mr U_{\pi}$ montre que les $\epsilon_N$ tendent vers $0$.

\end{proof}

On peut donc d\'efinir une section de $\omega^{k}$ via:

\[f_j (y)=\sum_{N=1}^{j} \frac{1}{{(a_{\pi}\pi^{r-1})}^N} \sum_{x \in \mr U^{N,\mr{good}}_\pi(y)} f(x) \]
pour $y \in {M}^{\mr{rig}}_{\gern,\mr{drap}}[0,\epsilon_j)$.

\begin{ass}
On suppose que la pente de $f$ par rapport  $\mr U_{\pi}$ satisfait $v_{\pi}(a_{\pi}) < k-r+1$.
\end{ass}

On a la proposition suivante :
\begin{prop}\label{prop:estimates}
\begin{itemize}
\item[(i)] Toutes les fonctions $f_j$ et $f$ sont born\'ees uniform\'ement par une constante $M$ sur leur domaine de d\'efinition ;
\item[(ii)] $\vert f_j -f \vert_{{M}^{\mr{rig}}_{\gern,\mr{drap}}[0,0] } \rightarrow 0$ avec $j$;
\item[(iii)] $\vert f_j -f \vert_{{M}^{\mr{rig}}_{\gern,\mr{drap}}(0,\epsilon_j]} \rightarrow 0$ avec $j$;
\item[(iv)] $\vert f_j -f_{j+1} \vert_{{M}^{\mr{rig}}_{\gern,\mr{drap}}[0,\epsilon_{j+1}]} \rightarrow 0$ avec $j$.
\end{itemize}
\end{prop}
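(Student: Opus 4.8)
Le plan est de ramener les quatre estimations \`a une unique majoration uniforme des termes
\[ T_N(y) := \frac{1}{(a_{\pi}\pi^{r-1})^{N}}\sum_{x\in \mr U^{N,\mr{good}}_{\pi}(y)} f(x) \]
de la s\'erie de Kassaei, une fois acquise la bornitude de $f$ (prolong\'ee \`a ${M}^{\mr{rig}}_{\gern,\mr{drap}}(0,1]$), c'est-\`a-dire la partie de (i) relative \`a $f$. Admettons provisoirement qu'il existe $M_0>0$ et $\theta\in(0,1)$, ind\'ependants de $N$ et $y$, avec $|T_N(y)|\le M_0\theta^{N-1}$. Alors, sur ${M}^{\mr{rig}}_{\gern,\mr{drap}}[0,0]$ on a $F(y)=\sum_{N\ge1}T_N(y)$ et $f_j(y)=\sum_{N=1}^{j}T_N(y)$, donc $|F-f_j|\le\sup_{N>j}|T_N|\le M_0\theta^{j}\to0$ : c'est (ii) ; de m\^eme $f_{j+1}-f_j=T_{j+1}$ donne (iv) ; et (i) pour $f_j$ et $F$ r\'esulte de l'in\'egalit\'e ultram\'etrique. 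Pour (iii), on d\'eroulera \eqref{eqn:extf} en s\'eparant parties bonne et mauvaise sur les $j$ premiers pas, ce qui fournit l'identit\'e $f(y)-f_j(y)=(a_{\pi}\pi^{r-1})^{-j}\sum_{x\in\mr U^{j,\mr{bad}}_{\pi}(y)}f(x)$ sur ${M}^{\mr{rig}}_{\gern,\mr{drap}}(0,\epsilon_j]$ ; comme $\sup|f|\le M<\infty$ par (i) et que chacun des $j$ pas mauvais fait gagner un facteur $|\pi|^{k(1-\epsilon_j)}$, le membre de droite est $O((\theta_j')^{j})$ avec $\theta_j'\to\theta<1$ lorsque $\epsilon_j\to0$, donc tend vers $0$.

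\emph{\'Etape 1 : bornitude de $f$.} D'abord $\overline{\mc M}_{\gern}(v)$ est affino\"ide --- comme on l'utilise d\'ej\`a dans la preuve du Corollaire \ref{corsection} via l'amplitude de $\omega$ et \cite[Theorem 5.3]{Pink2013} --- de sorte que $f$, section globale du faisceau coh\'erent $\omega^{\otimes k}$, y est born\'ee ; notons $|f|_v$ sa norme sup (pour la structure enti\`ere de $\omega^{\otimes k}$ issue de $\overline{M}_{\gern,\mr{drap}}$). Pour $\mr{deg}_{\pi}(y)$ minor\'e par un $t>0$ fix\'e, la Proposition \ref{prop:extto(0,1]} fournit un nombre d'it\'erations $N_0=N_0(t)$ \emph{uniforme} apr\`es lequel $\mr U^{N_0}_{\pi}(y)\subset\overline{M}^{\mr{rig}}_{\gern}(v)$, d'o\`u $|f(y)|\le|a_{\pi}\pi^{r-1}|^{-N_0}|f|_v$ par \eqref{eqn:extf} et l'in\'egalit\'e ultram\'etrique. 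Pour $\mr{deg}_{\pi}(y)<t$, on d\'ecompose $\tfrac1{a_{\pi}\pi^{r-1}}\mr U_{\pi}$ en sa partie ``bonne'' (o\`u le degr\'e remonte au-dessus de $t$) et sa partie ``mauvaise'' (o\`u il reste ${<}\,t$) ; pour cette derni\`ere le sous-groupe quotient\'e $L$ v\'erifie $\mr{deg}_{\pi}(L)=1-\mr{deg}_{\pi}(x)>1-t$, donc par la Proposition \ref{prop:degree} et le calcul de la preuve du Lemme \ref{lemma:quotientdeg1} on a $p_2^*\omega^{\otimes k}\subset\pi^{k(1-t)}p_1^*\omega^{\otimes k}$ : un gain de $|\pi|^{k(1-t)}$. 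L'hypoth\`ese stricte $v_{\pi}(a_{\pi})<k-r+1$ permet de choisir $t$ assez petit pour que $|a_{\pi}|^{-1}|\pi|^{k(1-t)-(r-1)}<1$ ; la partie mauvaise de $\tfrac1{a_{\pi}\pi^{r-1}}\mr U_{\pi}$ est alors contractante, et une estimation auto-am\'eliorante standard (appliqu\'ee d'abord sur chaque tranche quasi-compacte $\mr{deg}_{\pi}\in[\epsilon,t)$, la borne \'etant ind\'ependante de $\epsilon$) donne $\sup_{(0,1]}|f|\le|a_{\pi}\pi^{r-1}|^{-N_0-1}|f|_v=:M<\infty$.

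\emph{\'Etape 2 : la majoration des termes.} Par le Lemme \ref{lemma:epsdelta} les correspondances $\mr U^{\mr{good}}_{\pi}$, $\mr U^{\mr{bad}}_{\pi}$ et leurs compos\'ees sont finies et \'etales sur les strates pertinentes, si bien que les $T_N(y)$ sont des sommes finies et que les tir\'es-en-arri\`ere de sections le long des isog\'enies ont un sens. Un point de $\mr U^{N,\mr{good}}_{\pi}(y)$ s'obtient depuis $y\in{M}^{\mr{rig}}_{\gern,\mr{drap}}[0,0]$ par $N-1$ pas ``mauvais'' (restant en degr\'e $0$, donc avec $\mr{deg}_{\pi}(L)=1$ par le Lemme \ref{lemma:quotientdeg1}, cf.\ \cite{PilDuke}) suivis d'un pas ``bon'' qui tombe dans $(0,1]$ ; chaque pas mauvais porte la normalisation $\tfrac1{a_{\pi}\pi^{r-1}}$ et fait gagner $\pi^k$, et au pas bon on \'evalue $f$ en un point de $(0,1]$ o\`u $|f|\le M$ par l'\'Etape 1. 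L'in\'egalit\'e ultram\'etrique rendant inoffensif le nombre de sous-groupes somm\'es \`a chaque pas, on obtient, en d\'eroulant la d\'efinition r\'ecursive de $\mr U^{N,\mr{good}}_{\pi}$,
\[ |T_N(y)|\;\le\;\big(|a_{\pi}\pi^{r-1}|^{-1}|\pi|^{k}\big)^{N-1}\,|a_{\pi}\pi^{r-1}|^{-1}M\;=\;M_0\,\theta^{N-1},\qquad \theta:=|a_{\pi}|^{-1}|\pi|^{\,k-r+1}, \]
avec $M_0:=|a_{\pi}\pi^{r-1}|^{-1}M$ ; et $\theta<1$ est exactement l'hypoth\`ese de pente r\'e\'ecrite $|a_{\pi}|>|\pi|^{\,k-r+1}$.

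Le point d\'elicat est la majoration de l'\'Etape 2, ainsi que son avatar contractant de l'\'Etape 1 : il faut v\'erifier que le gain $\pi^k$ du Lemme \ref{lemma:quotientdeg1} se compose correctement le long d'une cha\^ine d'isog\'enies mauvaises (la trivialisation compar\'ee \'etant bien la bonne \`a chaque \'etage), que la d\'ecomposition bonne/mauvaise de $\mr U^N_{\pi}$ du Lemme \ref{lemma:epsdelta} est compatible \`a cette composition, et que la normalisation par la valeur propre donne un rapport $\theta$ strictement $<1$ --- et non seulement $\le1$ --- sous la borne de pente, ce qui est pr\'ecis\'ement l\`a que l'in\'egalit\'e \emph{stricte} $v_{\pi}(a_{\pi})<k-r+1$ intervient. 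Le reste (affino\"idit\'e et acyclicit\'e pour la bornitude de $f$, manipulations ultram\'etriques des restes, recouvrement du domaine de $f$ pour achever (i)) est routinier une fois ce point acquis.
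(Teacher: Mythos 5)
Votre preuve est correcte et suit essentiellement la m\^eme strat\'egie que celle de l'article : it\'eration du lemme \ref{lemma:quotientdeg1} (gain de $\pi^{k}$, ou $\pi^{kC}$ pr\`es du degr\'e $0$, \`a chaque pas mauvais) combin\'ee \`a la normalisation $(a_{\pi}\pi^{r-1})^{-1}$ et \`a l'in\'egalit\'e stricte de pente pour obtenir la d\'ecroissance g\'eom\'etrique $\theta^{N}$, avec le lemme \ref{lemma:epsdelta} pour la d\'ecomposition bonne/mauvaise et la proposition \ref{prop:extto(0,1]} pour l'uniformit\'e du $N_0$ dans (i). Votre r\'edaction est simplement plus syst\'ematique (borne terme \`a terme $|T_N|\le M_0\theta^{N-1}$ et estimation auto-am\'eliorante explicite pour la bornitude de $f$ en bas degr\'e, que l'article laisse implicite), sans constituer une route diff\'erente.
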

\begin{proof}
\begin{itemize}
\item[(i)] On commence par $f$, qui est d\'efinie sur ${M}^{\mr{rig}}_{\gern,\mr{drap}}(0,1]$. Comme $f$ est  born\'ee sur  $\overline{M}^{\mr{rig}}_{\gern}(v)$, par la d\'efinition \eqref{eqn:extf} $f$ est born\'ee sur ${M}^{\mr{rig}}_{\gern,\mr{drap}}[\epsilon,1]$ par un certain $M_1$. 
On montre par la suite que $\mr U_{\pi}^{\mr{bad}}$ a norme tr\`es grande sur ${M}^{\mr{rig}}_{\gern,\mr{drap}}[0,\epsilon]$; par construction, il existe une constante positive $C$ telle que $1 \geq \mr{deg}_{\pi}( L^{\mr{bad}}) \geq C$ pour tout $(\mc E, H, L^{\mr{bad}})$ avec $(\mc E, H)$ dans ${M}^{\mr{rig}}_{\gern,\mr{drap}}[0,\epsilon]$. Ceci implique que $p_2^* \omega_{ \mc E / L^{\mr{bad}} }\subset \pi^{C} p_1^* \omega_{\mc E}$ et que $f(\mr U_{\pi}^{\mr{bad}}(y)) \in  {\pi^{Ck}}\omega_{\mr U_{\pi}^{\mr{bad}}(y)}^{k}$. 
On peut donc \'ecrire 
\[ f(y) = \frac{1}{\pi a_{\pi}} f(\mr U_{\pi}^{\mr{bad}}(y)) + \frac{1}{\pi a_{\pi}}\sum_{x \in \mr U_{\pi}^{\mr{good}}(y)} f(x) 
\] 
et par le lemme \ref{lemma:epsdelta} tous ces $x$ sont dans ${M}^{\mr{rig}}_{\gern,\mr{drap}}(\delta,1)$ avec $\delta > \epsilon$. La norme de $f$ est donc born\'ee par la maximum entre $M_1/\vert \pi a_{\pi} \vert_{\pi}$ et $M_1 \vert \frac{\pi^{Ck}}{\pi a_{\pi}} \vert_{\pi}$.

En ce qui concerne les $f_j$: on prouve le tout par r\'ecurrence, en utilisant que la diff\'erence entre deux termes $f_j$ s'crit en termes de $\mr U_{\pi}^{N,\mr{bad}}(f)$. 
\item[(ii)] Pour montrer ce premier estim, il suffit d'it\'erer le lemme \ref{lemma:quotientdeg1} qui nous donne que la norme est plus petite de $\vert \frac{\pi^{jk}}{{(a_{\pi}\pi^{r-1})}^j} \vert_{\pi}$ qui tend vers $0$ par l'hypoth\`ese sur la pente ;
\item[(iii)] Pour cet autre estim, on utilise l'adaptation du lemme \ref{lemma:quotientdeg1} d\'emontr dans le point (i) et le fait que la diff\'erence est donn\'ee par $\frac{\mr U_{\pi}^{N,\mr{bad}}}{\pi a_{\pi}}$.
\item[(iv)] On utilise ici aussi l'estim de la norme de $\mr U_{\pi}^{N,\mr{bad}}$.

\end{itemize}
\end{proof}

\begin{thm}\label{thm:extension}
Soit $f$ une forme modulaire de Drinfeld surconvergente et propre pour $\mr{U}_{\pi}$ de pente $ < k-r+1$.  Il existe alors une unique extension de $f$ \`a  ${M}^{\mr{rig}}_{\gern,\mr{drap}}$, et cette extension est born\'ee.
\end{thm}
\begin{proof}
Il suffit de construire une section de $\omega^{k}$ sur ${M}^{\mr{rig}}_{\gern,\mr{drap}}[0,\epsilon]$ \`a partir de $f_j$ et $f$. Quitte \`a multiplier $f_j$ et $f$ par une puissance assez grande de $\pi$, on peut supposer que toutes les fonctions sont enti\`eres. Par la proposition \ref{prop:estimates}, partie (iii), modulo $\pi^{m_j}$ on peut recoller $f_j$ et $f$ sur ${M}^{\mr{rig}}_{\gern,\mr{drap}}[0,\epsilon]$ en une fonction $g_j$. Quitte \`a raffiner la suite $g_j$, on obtient une suite dans 
\[
\varprojlim_j \mr H^0({M}^{\mr{rig}}_{\gern,\mr{drap}}[0,\epsilon], \omega^{k}/\pi^j).
\]
 Par \cite[Lemma 2.3]{KassaeiDuke} (voir aussi \cite[Proposition 4.1.2]{BPS}), quitte \`a inverser $\pi$, on obtient alors une section de $\mr H^0({M}^{\mr{rig}}_{\gern,\mr{drap}}[0,\epsilon], \omega^{k})$ qui co\"incide avec $f$ sur ${M}^{\mr{rig}}_{\gern,\mr{drap}}(0,\epsilon]$. Par un inoffensif abus de notation, on d\'enote par $f$ aussi le recollement sur ${M}^{\mr{rig}}_{\gern,\mr{drap}}$ de cette section avec $f$. Par la proposition \ref{prop:estimates}, partie (i), $f$ est born\'ee. 
\end{proof}
On obtient donc le th\'eor\`eme de classicit\'e dsir :
\begin{cor}\label{coro:ext}
Soit $f$ comme dans le th\'eor\`eme \ref{thm:extension}, alors $f \in \mr{H}^0(\overline{M}_{\gern,\mr{drap}},\omega^{k})$. 
\end{cor}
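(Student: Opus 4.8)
Le plan est de d\'eduire le corollaire du th\'eor\`eme \ref{thm:extension} par un argument standard de prolongement au bord suivi d'une alg\'ebrisation. Ce th\'eor\`eme fournit en effet une section born\'ee de $\omega^{\otimes k}$ sur la vari\'et\'e rigide ouverte ${M}^{\mr{rig}}_{\gern,\mr{drap}}$ tout enti\`ere, que nous notons encore $f$, et qui prolonge la forme surconvergente de d\'epart (celle-ci \'etant vue comme section sur un voisinage strict du lieu multiplicatif via le plongement ${\overline{M}^{\mr{rig}}_{\gern}(v)}\hookrightarrow \overline{M}^{\mr{rig}}_{\gern,\mr{drap}}$ induit par le sous-groupe canonique). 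Il reste donc, premi\`erement, \`a \'etendre cette section aux pointes, et deuxi\`emement, \`a montrer qu'elle est alg\'ebrique.

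Pour le prolongement au bord, on utilise que $\overline{M}_{\gern,\mr{drap}}$ est une vari\'et\'e normale (sa fibre g\'en\'erique ouverte \'etant lisse sur $\mc O[1/\pi]$, o\`u $\gerp$ est inversible, et la compactification \'etant prise normale) et projective, le faisceau $\omega$ y \'etant ample par d\'evissage \`a \cite[Theorem 5.3]{Pink2013}. Le bord $\overline{M}_{\gern,\mr{drap}}\setminus {M}_{\gern,\mr{drap}}$ d\'efinit un ferm\'e analytique de codimension $\geq 1$ de l'espace rigide normal $\overline{M}^{\mr{rig}}_{\gern,\mr{drap}}$ (l'analytifi\'e de $\overline{M}_{\gern,\mr{drap}}$), le long duquel $\omega^{\otimes k}$ se prolonge en un fibr\'e en droites. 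Or une section born\'ee d'un fibr\'e en droites sur le compl\'ementaire d'un tel ferm\'e dans un espace rigide normal se prolonge \`a l'espace entier : c'est le th\'eor\`eme de prolongement de type Riemann en g\'eom\'etrie rigide (Bartenwerfer, L\"utkebohmert). On obtient ainsi une section $\widetilde{f}\in \mr{H}^0(\overline{M}^{\mr{rig}}_{\gern,\mr{drap}}, \omega^{\otimes k}_{\mr{an}})$ dont la restriction \`a ${M}^{\mr{rig}}_{\gern,\mr{drap}}$ est $f$.

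Pour l'alg\'ebrisation, comme $\overline{M}_{\gern,\mr{drap}}$ est projective sur le corps complet $\mc O[1/\pi]$ et $\omega^{\otimes k}$ coh\'erent, le th\'eor\`eme GAGA en g\'eom\'etrie rigide donne l'isomorphisme $\mr{H}^0(\overline{M}_{\gern,\mr{drap}}, \omega^{\otimes k})\cong \mr{H}^0(\overline{M}^{\mr{rig}}_{\gern,\mr{drap}}, \omega^{\otimes k}_{\mr{an}})$ ; la section $\widetilde{f}$ est donc classique, et comme sa restriction \`a l'ouvert Zariski-dense ${M}^{\mr{rig}}_{\gern,\mr{drap}}$ est $f$, on conclut que $f\in \mr{H}^0(\overline{M}_{\gern,\mr{drap}}, \omega^{\otimes k})$. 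Une variante, plus en accord avec le ton formel du reste de l'article, consiste \`a remarquer que, quitte \`a multiplier par une puissance de $\pi$, l'extension born\'ee devient enti\`ere et d\'efinit donc (en r\'eduisant modulo $\pi^m$ puis en passant \`a la limite) une section d'un mod\`ele formel projectif de $\overline{M}^{\mr{rig}}_{\gern,\mr{drap}}$, que le th\'eor\`eme d'existence de Grothendieck alg\'ebrise.

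L'essentiel du travail ayant d\'ej\`a \'et\'e accompli dans le th\'eor\`eme \ref{thm:extension} via la convergence de la s\'erie de Kassaei, cette d\'eduction ne pr\'esente pas de v\'eritable obstacle ; le seul point auquel il faut prendre garde est la validit\'e du prolongement born\'e \`a travers les pointes, qui repose sur la normalit\'e de $\overline{M}_{\gern,\mr{drap}}$ le long du bord, conjointement avec l'amplitude de $\omega$ qui garantit la projectivit\'e requise pour l'emploi de GAGA.
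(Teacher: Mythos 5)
Votre preuve est correcte et suit essentiellement la m\^eme d\'emarche que celle de l'article : extension born\'ee \`a travers le bord (de codimension $1$ dans la compactification normale) par le th\'eor\`eme de prolongement de type Riemann en g\'eom\'etrie rigide de L\"utkebohmert, puis alg\'ebrisation par le GAGA rigide de Kiehl. La variante formelle via le th\'eor\`eme d'existence de Grothendieck que vous esquissez n'est pas n\'ecessaire, mais elle est coh\'erente avec l'argument.
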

\begin{proof}
Par la section pr\'ec\'edente, une telle forme $f$ d\'efinit une section born\'ee de $\omega^k$ sur ${M}^{\mr{rig}}_{\gern,\mr{drap}}$. Comme la compactification $\overline{M}^{\mr{rig}}_{\gern,\mr{drap}}$ est normale,  son bord est un ferm\'e de Zariski de codimension $1$, et  $f$ est born\'ee, on peut utiliser \cite[Theorem 1.6 I)]{LutkExtensionCodim} pour \'etendre $f$ \`a une section de $\mr{H}^0(\overline{M}^{\mr{rig}}_{\gern,\mr{drap}},\omega^{k})$. Par le th\'eor\`eme GAGA rigide \cite{Kiehl}, on a 
\[
\mr{H}^0(\overline{M}^{\mr{rig}}_{\gern,\mr{drap}},\omega^{k}) = \mr{H}^0(\overline{M}_{\gern,\mr{drap}},\omega^{k})
\]
et on conclut.
\end{proof}

\section*{Remerciements}

Nous remercions chaleureusement le professeur Hida pour de nombreuses conversations stimulantes et plus gnralement, pour ses travaux qui nous ont profondment inspirs. Nous tenons  remercier bien sincrement Shin Hattori pour son vif intrt, ses nombreuses remarques pointues sur la partie de l'article portant sur les familles de pente finie qui nous ont permis de la remanier entirement, d'liminer plusieurs points douteux et de clarifier nettement certaines preuves. Nous remercions aussi Federico Pellarin pour ses ractions quasi-instantanes  une premire version de cet article et ses commentaires pour amliorer la prsentation. Cette recherche a t rendue possible en partie grce au financement de la Fondation Simons et du Centre de recherches mathmatiques (programme CRM-Simons), et surtout grce  une dlgation auprs du Centre national de la recherche scientifique (CNRS), sans laquelle la prsence de M.-H.N.  Montral eut t impossible sur une si longue dure.

\section*{Sommaire des notations}


\noindent
{\bf Notations algbriques :}

\begin{itemize}
\item Soient $p$ un nombre premier, et $\Fq[T]$ l'anneau des polynmes en une variable $T$, o $\Fq$ est une extension finie du corps $\F_p$;
\item Soit $F$ un corps de fonctions en une variable sur un corps fini $\F_q$;
\item Soit $\infty$ une place de $F$;
\item Soit $A$ l'anneau des lments de $F$ entiers sauf possiblement en l'$\infty$;
\item Soit $\gerp \subset A$ un idal premier engendr par un polynme irrductible unitaire de degr $d$ et $v$ la place associ\'ee ;
\item Soit $A_{\mf p}$ la compltion $\mf p$-adique $A$ et $\pi$ une uniformisante \footnote{i.e., $A_{\mf p} \simeq \F_{q^d}[[\pi]]$ aprs avoir choisi l'uniformisante $\pi$} ;
\item Soit $\kappa_{\mf p}$ le corps r\'esiduel de $A_{\mf p}$ ;
\item Soit $\mc O$ une extension finie de $A_{\mf p}$ suffisamment grande ;
\item Soit $R$ une $A_{\mf p}$-algbre $v$-adiquement complte sans $\calO$-torsion ;
\item Soit $F_{\mf p}$ le corps des fractions de $A_{\mf p}$ ;

\item Soit $\gern$ un idal de $A$ relativement premier  $v$.

\end{itemize}

\noindent
{\bf Notations de gomtrie algbrique:}

\begin{itemize}
\item Soient $M_{\gern}$ la varit modulaire de Drinfeld i.e., l'espace de module fin paramtrisant les modules de Drinfeld de rang $r$ munis d'une structure de niveau $\gern$, et $\overline{M_{\gern}}$ sa compactification minimale ; 
\item Soient $\calE$ l'objet universel au-dessus de $M_{\gern}$, et $\overline{\calE}$ son extension au-dessus de $\overline{M_{\gern}}$ en un module de Drinfeld gnralis au sens de Pink; 

\item Soit $\omega := \omega_{\overline{\calE}/ \overline{M_{\gern}}}$ le dual de l'algbre de Lie relative de $\overline{\calE}$ ;

\item Soit $\calM_k := \mr H^0(\overline{M_{\gern}}, \omega^{k})$ l'espace des formes modulaires de Drinfeld de poids $k$ sur ${M_{\gern}}$.
\end{itemize}

\noindent
{\bf Notations de la th\'eorie de Hida :}

\begin{itemize}
\item Soit $\mr U_{\pi}$ l'op\'erateur de Hecke en $v$ normalis\'e ;
\item Soit $\mc V$ l'espace des formes modulaires $\pi$-adiques ;
\item Soit $\mr{Ha}$ l'invariant de Hasse. 
\end{itemize}   
   
\bibliographystyle{amsalpha}
\bibliography{referencesDrinfeld}
\end{document}